\newcommand{\Tau}{\mathcal{T}}
\title{Adaptable and conflict colouring multigraphs with no cycles of length three or four}
\author{Jurgen Aliaj \and Michael Molloy}
\date{
    {\small
        University of Toronto \\ [1ex]
        Department of Computer Science \\ [1ex]
        10 King's College Road \\ [1ex]
        Toronto, ON, Canada, M5S 3G4 \\ [1ex]
        \texttt{\{aliaj,molloy\}@cs.toronto.edu} \\ [3ex]
    }
    \today
}
\newtheorem{thm}{Theorem}[section]
\newtheorem{cor}[thm]{Corollary}
\newtheorem{fact}[thm]{Fact}
\theoremstyle{definition}
\newtheorem{definition}[thm]{Definition}
\theoremstyle{definition}
\newtheorem{example}[thm]{Example}
\newtheorem{modification}{Modification}
\newtheorem{observation}{Observation}
\theoremstyle{plain}
\newtheorem{lem}{Lemma}[section]
\newtheorem{claim}[lem]{Claim}
\theoremstyle{definition}
\newtheorem{remark}[lem]{Remark}
\providecommand{\keywords}[1]{\textbf{Keywords:} #1}
\begin{document}
\maketitle

\begin{abstract}
    The \textcolor{defaultcolor}{\textit{adaptable choosability}} of a multigraph $G$, denoted $\mathrm{ch}_a(G)$, is the smallest integer $k$ such that any edge labelling, $\tau$, of $G$ and any assignment of lists of size $k$ to the vertices of $G$ permits a list colouring, $\sigma$, of $G$ such that there is no edge $e = uv$ where $\tau(e) = \sigma(u) = \sigma(v)$. Here we show that for a multigraph $G$ with maximum degree $\Delta$ and no cycles of length 3 or 4, $\mathrm{ch}_a(G) \leq (2\sqrt{2}+o(1))\sqrt{\Delta/\ln\Delta}$. Under natural restrictions we can show that the same bound holds for the \textcolor{defaultcolor}{\textit{conflict choosability}} of $G$, which is a closely related parameter recently defined by Dvo{\v r}{\'a}k, Esperet, Kang and Ozeki. \\ [1em]
    \noindent
    \keywords{adaptable colouring; conflict colouring; high girth graphs; probabilistic method}
\end{abstract}

\section{Introduction}
Hell and Zhu introduced the adaptable chromatic number \cite{hell-zhu-adaptable}. This variant on the chromatic number was also introduced independently (under different names) by Erd{\"o}s and Gy{\'a}rf{\'a}s \cite{split}, Cochand and Duchet \cite{emulsive}, and Archer \cite{chromcapacity}.

The basic idea is this: we are given an edge labelling of a graph $G$ and we assign colours to the vertices of $G$ such that there is no edge $e=uv$ where $u$, $v$, and $e$ are all the same colour. Such a colouring is called an \textit{adaptable colouring} of $G$. I.e. in conventional graph colouring each edge forbids its endpoints from both receiving the same colour, whereas in adaptable colouring each edge forbids its endpoints from both receiving a particular colour, namely its edge label. Note that an adaptable colouring may be improper in the sense that two adjacent vertices may both receive the colour red, so long as the edge connecting them is not also labelled with the colour red.

We can take this a step further by including multiple edges between adjacent vertices, to further constrain the colours assigned to them. In fact, a conventional colouring problem with $k$ colours can be reduced to an adaptable colouring problem by passing to the corresponding multigraph with multiplicity $k$, and such that each edge between two endpoints is given a unique colour from $[k] = \{1,\dots,k\}$. Therefore it is natural---and in fact important---to allow multiple edges in our graphs.

For a formal definition: let $G = (V,E)$ be a multigraph. Let $\tau: E \to \mathbb{N}$ be an edge labelling of $G$, we say that a vertex colouring, $\sigma: V \to \mathbb{N}$, of $G$ is a $\textit{proper adaptable colouring}$ if there is no edge $e = uv$ such that $\sigma(u) = \sigma(v) = \tau(e)$. The $\textit{adaptable chromatic number}$ of $G$, denoted $\chi_a(G)$, is the minimum integer $k$ such that every edge labelling of $G$ using labels from $[k]$ permits a proper adaptable colouring of $G$ from $[k]$. Since every proper vertex colouring of $G$ is a proper adaptable colouring of $G$ for any edge labelling, we can see that $\chi_a(G) \leq \chi(G)$, where $\chi(G)$ is the usual $\textit{chromatic number}$ of $G$. Applications of this problem include job scheduling \cite{job-sched}, matrix and list partitions of graphs \cite{matrix-part, list-part}, as well as full constraint satisfaction problems \cite{csp}.

Here we present a result for adaptable list colouring, which is defined naturally. Suppose each $v \in G$ is assigned a list $L(v) \subset \mathbb{N}$ of candidate colours. Then a $\textit{proper adaptable list colouring}$ of $G$ is a proper adaptable colouring where each vertex $v$ is assigned a colour from $L(v)$. The \textcolor{defaultcolor}{$\textit{adaptable choosability}$} of $G$, denoted $\mathrm{ch}_a(G)$, is the smallest integer $k$ such that any edge labelling of $G$ and any assignment of lists of size $k$ to the vertices of $G$ permits a proper adaptable list colouring of $G$. If $\mathrm{ch}_a(G) = k$, then assigning $L(v) = [k]$ for each $v \in G$ permits a proper adaptable colouring, hence $\chi_a(G) \leq \mathrm{ch}_a(G)$. Moreover, since any proper vertex colouring of $G$ from lists of size $k$ permits a proper adaptable colouring of $G$ for any edge labelling, we have $\mathrm{ch}_a(G) \leq \mathrm{ch}(G)$, where $\mathrm{ch}(G)$ is the usual \textcolor{defaultcolor}{$\textit{choosability}$} of $G$. Adaptable colouring and adaptable list colouring have been studied in \textcolor{defaultcolor}{\cite{adaptablechrom-graphprod, molloy-thron-2, hell-zhu-adaptable, adaptablechoosability2009,adaptablechoosabilityplanarcycle, adaptedplanar}}.

In the case of conventional graph colouring, much attention has been given to colouring graphs of high girth \cite{trianglefree-dabrowski, trianglefree-thomassen, randomgirth5}, as typically fewer colours are required. We will see that the same phenomenon can be observed with adaptable list colouring.

Two results in particular are of interest to us. The first is by Kim \cite{kim95} who showed that if $\Delta$ is the maximum degree of a simple graph $G$ with girth at least 5 (i.e. no cycles of length 4 or less), then $\mathrm{ch}(G) \leq (1 + o(1))\frac{\Delta}{\ln\Delta}$. The second is by Molloy and Thron \cite{molloy-thron}, who showed that for a general multigraph $G$ with maximum degree $\Delta$, $\mathrm{ch_a}(G) \leq (1 + o(1))\sqrt{\Delta}$. In a sense, we combine these results by presenting an upper bound on the adaptable \textcolor{defaultcolor}{choosability}, which we show for essentially the same class of graphs considered by Kim.

Recall that it is important to allow multiple edges in the graphs we consider. So if we would like to study adaptable colouring in a high girth setting, we must define a notion of high girth for multigraphs. The most natural course of action is to permit 2-cycles, i.e. multiple edges, while disallowing other short cycles in our graphs. In particular we consider multigraphs with no cycles of length 3 or 4, which is the most natural analogue to Kim's setting. We get the following result:

\begin{thm}\label{adaptable-theorem}
Let $G$ be a multigraph with no cycles of length 3 or 4 and maximum degree $\Delta$, then $\mathrm{ch}_a(G) \leq (2\sqrt{2} + o(1))\sqrt{\Delta/\ln\Delta}$.
\end{thm}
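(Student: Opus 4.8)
The plan is to adapt the semi-random (Rödl nibble / Lovász Local Lemma) method that Kim used for ordinary choosability of girth-5 graphs, modified to handle the adaptable constraints and the presence of multiple edges. Fix an edge labelling $\tau$ and lists $L(v)$ of size $k = (2\sqrt 2 + o(1))\sqrt{\Delta/\ln\Delta}$. For each vertex $v$ and each colour $c \in L(v)$ we think of a ``colour-assignment'' as tentatively activating some candidates, and then keeping a candidate only if it survives a conflict test with neighbours. The key structural observation driving the improved constant is that, because $G$ has no triangle and no $4$-cycle, the neighbourhoods behave almost independently: any two vertices have at most one common neighbour, so the ``second neighbourhood'' of the candidates for $v$ spreads out, which is exactly what lets one push the list size down to order $\sqrt{\Delta/\ln\Delta}$ rather than $\sqrt\Delta$. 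The factor $2\sqrt2$ (versus Kim's $1$ in the non-adaptable case, or Molloy--Thron's $1$ with $\sqrt\Delta$) comes from the fact that an adaptable conflict on an edge $e=uv$ only forbids the single pair $(\tau(e),\tau(e))$, so the probability that a tentative colour for $v$ is killed is roughly (degree)$\times$(prob a given neighbour picks the one bad colour) $\approx \Delta \cdot p \cdot (1/k)$ rather than $\Delta \cdot p$; balancing the survival probability against the list size then yields $k \asymp \sqrt{\Delta/\ln\Delta}$ with the stated constant.

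Concretely, I would run one nibble iteration as follows. Independently, each vertex $v$ ``proposes'' each colour $c\in L(v)$ with probability $p = \alpha/\sqrt{\Delta\ln\Delta}$ for a suitable constant $\alpha$; call the set of proposed colours $A(v)$. A proposed colour $c$ at $v$ is \emph{blocked} if $c = \tau(e)$ for some edge $e = uv$ with $c$ also proposed at $u$; let $A'(v) = A(v)\setminus\{\text{blocked colours}\}$. If $A'(v)\neq\emptyset$ we colour $v$ with an arbitrary element of $A'(v)$ and remove $v$ (and its incident edges) from the graph; we then also delete from each surviving neighbour's list any colour that is now ``used up'' in the adaptable sense along a retained edge. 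The two things to control after one iteration are: (i) the new maximum degree $\Delta'$ of the residual graph on still-uncoloured vertices, and (ii) the new minimum list size $\ell'$. A routine first-moment computation shows $\Pr[v \text{ uncoloured}] = \prod_{c\in L(v)}(1-p\cdot\Pr[c\text{ not blocked}]) \le e^{-\Theta(k p)}$, and since blocking is rare (a colour is blocked with probability at most $\Delta p/k = o(1)$ given the parameters), this is $e^{-(1-o(1))kp}$; similarly the expected shrinkage of a list and of a degree are computed via inclusion–exclusion, using the girth hypothesis to argue the relevant bad events are nearly independent and no colour is ``double counted'' through two different common neighbours. One then iterates $O(\log\log\Delta)$ times, at each stage maintaining the invariant $\ell_i / \sqrt{\Delta_i/\ln\Delta_i} \ge 2\sqrt2 - o(1)$, until $\Delta$ is small enough (a constant, or $\mathrm{polylog}$) that the remaining graph can be finished off greedily or by a direct argument because each surviving vertex still has a list much larger than its residual degree.

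The concentration step is where the girth hypothesis and the Local Lemma do the real work. For a fixed vertex $v$, the number of colours surviving in $L(v)$, and the number of still-uncoloured neighbours of $v$, are each functions of the independent proposal choices of vertices within distance $2$ of $v$; because $G$ has no $3$- or $4$-cycles these functions are Lipschitz with small coefficients and have bounded ``certificates,'' so Talagrand's inequality (or the bounded-differences inequality with an exceptional-outcome tweak) gives concentration within a $(1\pm o(1))$ factor of the mean with failure probability $\le \Delta^{-\omega(1)}$. We then apply the Lovász Local Lemma to the bad events ``$L(v)$ shrank too much'' and ``$\deg(v)$ shrank too little'' over all $v$; each such event depends only on proposals within distance $2$ of $v$, hence on at most $\mathrm{poly}(\Delta)$ other bad events, and the failure probabilities are super-polynomially small, so the LLL conditions are comfortably met and a good outcome of the iteration exists.

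I expect the main obstacle to be \textbf{the bookkeeping of the adaptable conflicts through multiple iterations}, specifically proving that list-deletions and degree-deletions do not become positively correlated in a way that breaks concentration. Two subtleties: first, a multi-edge between $u$ and $v$ with several labels means one neighbour can threaten several colours of $v$ at once, so the ``deletion events'' for different colours of $v$ are not independent — one has to check that the dependency is mild enough (again using that $u,v$ share at most one common neighbour, so the \emph{cross}-terms between distinct neighbours are controlled). Second, because adaptable conflicts care about the labels of \emph{retained} edges (those to still-uncoloured vertices), the residual instance's labelling is determined by the random choices, and one must verify that the invariant on $\ell_i/\sqrt{\Delta_i/\ln\Delta_i}$ is preserved with the correct constant $2\sqrt2$ — getting the constant exactly right (rather than some larger absolute constant) is the delicate part, and it is driven by the extra factor $1/k$ in the blocking probability, which is precisely what distinguishes the adaptable setting from Kim's and accounts for the $\sqrt2$ discrepancy with the non-list adaptable bound $\sqrt\Delta$ of Molloy--Thron combined with Kim's $\sqrt{\Delta/\ln\Delta}$-type saving.
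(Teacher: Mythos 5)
Your high-level plan — a Kim-style nibble with Talagrand concentration and the Lov\'asz Local Lemma, exploiting girth $\geq 5$ (modulo double edges) for near-independence — is the same framework the paper uses. But there is one structural idea you are missing, and it is precisely the ingredient that makes the constant $2\sqrt2$ rather than $\sqrt2$, and without which the nibble step fails under an adversarial labelling.

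You assume the blocking probability for a colour $c$ at $v$ is $\approx \Delta p / k$, i.e.\ that the edge labels incident to $v$ are spread evenly over the $k$ colours. That is not true adversarially: the quantity the paper calls $t_0(v,c)$ — the number of edges $e = uv$ with adaptable label $\tau(e) = c$ and $c$ still in $u$'s list — can be as large as $\Delta$. For such a colour, the probability it survives your nibble step is essentially $0$, not $1-o(1)$, and the survival probability is wildly non-uniform across colours. The paper addresses this with an upfront cleaning step (Modification~\ref{mainmod}): from each list, delete every colour $c$ with $t_0(v,c) > \frac{1}{\sqrt2+\epsilon/2}\sqrt{\Delta\ln\Delta}$. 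A counting argument (Claim~\ref{truncationlem}) shows at most $(\sqrt2+\epsilon/2)\sqrt{\Delta/\ln\Delta}$ colours are removed, so starting from $(2\sqrt2+\epsilon)\sqrt{\Delta/\ln\Delta}$ colours one retains at least $(\sqrt2+\epsilon/2)\sqrt{\Delta/\ln\Delta}$ of them. This \emph{loss of up to half the list} is exactly where the extra factor $2$ comes from (Remark~\ref{remark1} states this explicitly); your explanation of the $2\sqrt2$ as arising purely from balancing survival probability against list size would, at best, deliver the $\sqrt2$. After the cleaning step, the paper still has to work to keep the per-neighbour quantity $t_i(v,u,c)$ small enough that Talagrand's inequality has small enough Lipschitz/certificate constants — in the pure adaptable case this is automatic since the conflict degree is $1$, but it becomes a second obstacle in the more general conflict-colouring theorem (\ref{conflict-theorem}) from which Theorem~\ref{adaptable-theorem} is deduced.

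Two smaller discrepancies. First, your per-iteration probability formula $\Pr[v\text{ uncoloured}] = \prod_{c}(1 - p\cdot\Pr[c\text{ not blocked}])$ treats the per-colour blocking events as independent, which they are not (a single neighbour's proposals can simultaneously block several colours of $v$ when there are parallel edges); the paper avoids this by activating each vertex once with a single uniformly random colour and using equalizing coin flips so that the per-colour keep probability is genuinely uniform and its product structure is exact. Second, the number of iterations in the paper's argument is of order $\ln\Delta\ln\ln\Delta$, not $O(\log\log\Delta)$. These are fixable, but the missing deletion of high-$t_0(v,c)$ colours is a genuine gap: without it your invariant on $\ell_i/\sqrt{\Delta_i/\ln\Delta_i}$ would not be maintainable against a labelling that concentrates many edge labels on one colour.
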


We comment briefly on the tightness of \textcolor{defaultcolor}{Theorem \ref{adaptable-theorem}}. For any $g$ and $\Delta \geq 3$, random $\Delta$-regular graphs have girth at least $g$ and satisfy $\chi(G) \geq (\frac{1}{2} + o(1))\frac{\Delta}{\ln\Delta}$ with high probability \cite{frieze1992independence}. Moreover it is known that for any graph $G$, $\chi_a(G) \geq (1 + o(1))\sqrt{\chi(G)}$ by \cite{adap-versus-chrom}. Combining these two facts implies the existence of a graph $G$ with maximum degree $\Delta$ and no 3 or 4-cycles, satisfying $\chi_a(G) \geq (\frac{\sqrt{2}}{2} + o(1))\sqrt{\Delta/\ln\Delta}$. And since $\mathrm{ch}_a(G) \geq \chi_a(G)$, this implies the bound in \textcolor{defaultcolor}{Theorem \ref{adaptable-theorem}} is within a factor of 4 of the best possible (note that Kim's result is within a factor of 2 of the best possible, see remark \ref{remark1} to understand why this changes to a 4 in our case).

We achieve \textcolor{defaultcolor}{Theorem \ref{adaptable-theorem}} by showing a more general result for a problem called conflict colouring. In adaptable colouring, we are given an edge labelling $\tau$ so that each edge $e = uv$ constrains the colours assigned to $u$ and $v$ symmetrically. I.e. the edge colour $c = \tau(e)$ acts as a constraint preventing $u$ and $v$ from both receiving the same colour $c$. In conflict colouring we allow for the possibility of an asymmetric constraint, meaning that instead of the edge label being a single colour, it can now be an ordered pair of colours $(c, c')$ which is meant to prevent $u$ from receiving $c$ and $v$ from receiving $c'$ simultaneously. The problem was introduced independently by Dvo{\v r}{\'a}k and Postle \cite{correspond-colour} and Fraigniaud, Heinrich and Kosowski \cite{local-con-colour}.

We should point out that in \cite{correspond-colour, local-con-colour}, the authors present slightly different (but equivalent) representations of what we call the conflict colouring problem. For instance, in \cite{correspond-colour} the authors consider \textit{correspondence colouring}. In this variant we are given a simple graph where each edge $e=uv$ corresponds to a partial matching, $M_e$, between the candidate colours $L(u)$ and $L(v)$. A pair $f \in M_e$ forbids $u$ and $v$ from both receiving the colours corresponding to the entries of $f$. This is of course the same problem we consider here, the only important difference being that in our case the pairs in $M_e$ are instead included in our graph as parallel edges between $u$ and $v$, and therefore they contribute to the maximum degree. Our setting most closely resembles that of Dvo{\v r}{\'a}k, Esperet, Kang and Ozeki \cite{least-con-choose}, and we use the same notion of conflict degree (we define this in a moment) seen in \cite{local-con-colour}.

For a multigraph $G = (V,E)$, Dvo{\v r}{\'a}k et al. \cite{least-con-choose} defined the $\textit{conflict choosability}$ of $G$, denoted $\mathrm{ch}_{\nleftrightarrow}(G)$, as the smallest integer $k$ such that any edge labelling $\tau: E \to \mathbb{N} \times \mathbb{N}$ and any assignment of lists $L(v) \subset \mathbb{N}$ to every $v \in G$, each of size $k$, permits a vertex colouring $\sigma: V \to L(v)$ such that there is no edge $e = uv \in G$ where $\tau(e) = (\sigma(u), \sigma(v))$. We call $\sigma$ a $\textit{proper conflict colouring}$.

The reader may notice some ambiguity here, since for a label $\tau(e) = (c,c')$ it is not clear which of the endpoints of $e$ should be forbidden from receiving $c$ and which should be forbidden from receiving $c'$. We will define some convenient notation to overcome this (see the comment at the beginning of section 3). For now, what we mean will be made clear by the context.

Observe that for any edge labelling and list assignments of size $k$, we can rename the colours in $L(v)$ as well as the colours assigned by $\tau$ accordingly while preserving the existence of a proper conflict colouring. So in fact we may assume $L(v) = [k]$ for each $v \in G$. I.e. conflict colouring and conflict list colouring are one and the same.

It turns out that it is not possible to obtain the same bound for conflict choosability as in \textcolor{defaultcolor}{Theorem \ref{adaptable-theorem}}, which will be made clear by the following example.

\begin{example}\label{counterexample1}
Let $\Delta$ be a natural number and $L$ be a set of colours with size $\ell \leq \sqrt{\Delta}$. Consider the multigraph $G$ with only two vertices, $u$ and $v$, each assigned the same set $L$ of colours and connected by $\ell^2$ edges. Note the maximum degree of $G$ is $\ell^2 \leq \Delta$, so if needed we can increase the maximum degree to be exactly $\Delta$ by adding dummy vertices. Define $\tau$ such that each edge between $u$ and $v$ is given a unique label from one of the $\ell^2$ possible pairs of colours, and observe that $G$ does not have a proper conflict colouring. I.e. $G$ is a multigraph with no cycles of length 3 or 4, but $\mathrm{ch}_{\nleftrightarrow}(G) > \sqrt{\Delta}$.
\end{example}

To investigate what went wrong in the previous example we need to first define a parameter called the conflict degree.

\begin{definition}\label{conflict-degree-definition} Given a multigraph $G=(V,E)$, an edge labelling $\tau: E \to \mathbb{N} \times \mathbb{N}$, a colour $c$, and a pair of vertices $u$ and $v$, define $d(\tau, c, u, v)$ to be the number of edges $e=uv \in G$ such that $\tau(e) = (c, c')$ for some colour $c'$. Define the \textit{conflict degree} of $\tau$, denoted $D(\tau)$, to be the maximum of $d(\tau, c, u, v)$ over all colours $c$ and pairs $u, v$.
\end{definition}

Notice that in \textcolor{defaultcolor}{Example \ref{counterexample1}} we considered an edge labelling $\tau$ with $D(\tau) = \ell$, the size of our lists. In other words if we allow the conflict degree to become too high then we have no hope of finding a proper conflict colouring. In the specific case where we consider lists of size $\ell = (2\sqrt{2} +o(1))\sqrt{\Delta/\ln\Delta}$, the same problem occurs if $D(\tau)$ is as high as $\ell^{15/16}$, which will be made clear by \textcolor{defaultcolor}{Example \ref{counterexample2}}. Note that \textcolor{defaultcolor}{Example \ref{counterexample2}} is more involved, and so we include the details at the end of the section. We simply summarize the main point here.

\begin{fact}\label{badfact}
For any constant $\alpha \geq 1$, there is a multigraph $G$ with maximum degree $\Delta$, no cycles of length 3 or 4, an assignment of lists each with size $\ell=\alpha\sqrt{\Delta/\ln\Delta}$, an edge labelling $\tau$ with $D(\tau) = \ell^{f(\alpha)}$, and such that $G$ does not have a proper conflict colouring, \textcolor{defaultcolor}{where $f(\alpha)$ is non-negative for $\alpha \geq 1$, non-decreasing, and tending to 1 as $\alpha$ approaches infinity. In particular, when $\alpha = 2\sqrt{2} + o(1)$ we have $f(\alpha) = \frac{15}{16}$}.
\end{fact}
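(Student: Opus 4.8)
The plan is to generalize the construction in Example~\ref{counterexample1}, where a single pair of vertices joined by $\ell^2$ parallel edges already defeats every colouring when we allow $\ell = \sqrt{\Delta}$. The obstruction there is that with $\ell^2$ edges between $u$ and $v$, all pairs $(c,c')\in L(u)\times L(v)$ can appear as labels, so whatever $\sigma(u),\sigma(v)$ we pick, some edge conflicts. To keep the maximum degree at $\Delta$ while making the lists larger, I would instead build a bipartite-like gadget: a small set $U$ of $t$ vertices on one side and a single vertex $v$ (or a matching set) on the other, wire each $u\in U$ to $v$ with many parallel edges, and carefully design $\tau$ so that, no matter how $v$ is coloured, for each candidate colour of $v$ there are enough vertices in $U$ whose every colour is blocked. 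Concretely, if $v$ receives colour $c'$, then across all the edges incident to $v$ labelled with second coordinate $c'$, the first coordinates should cover all of $L(u)$ for at least one $u\in U$; iterating this over a tree-like arrangement of such gadgets (to avoid short cycles) forces a contradiction level by level.

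The key steps, in order, are: (1) fix $\ell=\alpha\sqrt{\Delta/\ln\Delta}$ and determine, as a function of $\alpha$, how large the conflict degree $D(\tau)$ can be taken while still leaving room in the degree budget $\Delta$; the point is that a vertex of degree $\Delta$ split among $\ell$ colour-classes (for the first coordinate) and $t$ neighbours can devote roughly $\Delta/(t\ell)$ edges to each (colour, neighbour) pair, and we want this to be $\geq D$, giving a relation like $D \cdot t \cdot \ell \lesssim \Delta$; (2) choose the number of layers $r$ and the branching $t$ of the tree of gadgets so that a ``covering'' argument propagates: being forced to avoid one colour at the root forces avoiding all colours at some child, which forces a full conflict one level down, and so on until depth $r$ where no colours remain; (3) verify the counting inequality that makes this propagation valid, which will come down to something like $D^{\,r}\ge \ell$ (each layer kills a factor, and after $r$ layers the whole list of size $\ell$ is exhausted) together with the degree constraint from step~(1); (4) solve the resulting system for the largest admissible exponent, writing $D(\tau)=\ell^{f(\alpha)}$, and check that $f$ is non-negative on $\alpha\ge 1$, non-decreasing, and $f(\alpha)\to 1$ as $\alpha\to\infty$, and in particular that $f(2\sqrt2)=\tfrac{15}{16}$; (5) confirm girth: because the gadgets are arranged in a tree and any cycle must use at least two parallel edges between the same pair (length $2$) or else traverse at least three distinct gadget-edges, a short calculation rules out $3$- and $4$-cycles, possibly after subdividing or spacing the gadgets, and add dummy vertices to make the maximum degree exactly $\Delta$.

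I expect the main obstacle to be step~(4): extracting a clean closed form for $f(\alpha)$ that simultaneously satisfies all three required properties and nails the value $\tfrac{15}{16}$ at $\alpha=2\sqrt2$. The tension is between making $D$ large (which wants many edges per (colour, neighbour) pair, hence few neighbours $t$ or few colours, hence weak covering) and making the covering argument go through across enough layers (which wants $t$ and the per-pair multiplicities large enough that $D^r\ge\ell$); balancing these against the hard cap $D t \ell \lesssim \Delta = (\alpha^{-2}\ell^2\ln\Delta)$ is where the exponent $f(\alpha)$ is pinned down, and getting the $o(1)$ and logarithmic factors to land exactly on $\tfrac{15}{16}$ rather than some nearby value will require care. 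A secondary but routine nuisance will be step~(5): the most natural gadget (many parallel edges between two vertices) creates $2$-cycles, which are allowed, but chaining gadgets through shared vertices can inadvertently create $4$-cycles, so I would either use disjoint vertex sets at each layer joined by single edges where no multiplicity is needed, or subdivide, and then re-examine the degree bookkeeping in step~(1) to absorb the extra vertices.
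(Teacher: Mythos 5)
Your proposal takes a genuinely different route from the paper, but it has a real gap at its core. The paper does not build the unsatisfiable instance from scratch. Instead, it \emph{starts} from a known lower bound: by Proposition~6 of \cite{least-con-choose} there is a \emph{simple} girth-$5$ graph $G$ with an edge labelling $\tau$ from $[\ell]$, $\ell = \sqrt{\Delta/\ln\Delta}$, admitting no proper conflict colouring, and simplicity forces $D(\tau)=1$. The paper then applies a ``blow-up'' operation: replace every edge by $\ell^2$ parallel edges, coarsen the colour palette $[\ell^2]$ into $\ell$ blocks of size $\ell$, and pull back each constraint $(i,j)$ to all of $C_i\times C_j$. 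A proper conflict colouring of the blown-up instance would project to one of the original, so unsatisfiability is preserved; meanwhile the list size squares, the maximum degree becomes $\Delta\ell^2$, and the conflict degree multiplies by the old $\ell$. Iterating $k$ times gives $D(\tau_k)=\ell_k^{1-2^{-k}}$, and tracking the coefficient shows $\ell_k=(\sqrt2-o(1))^k\sqrt{\Delta_k/\ln\Delta_k}$; four iterations give $\alpha\approx2\sqrt2$ and exponent $15/16$. The key point is that the $\ln\Delta$ in the list size and the nontrivial exponent $f(\alpha)<1$ both come \emph{for free} from the base case and the multiplicative recursion; the construction never has to argue from first principles that the instance is uncolourable.

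Your star/tree gadget, by contrast, is essentially Example~\ref{counterexample1} again, and it does not get below conflict degree $\ell$. To make a single colour $c'$ of $v$ ``kill'' an entire neighbour $u$, you need all $\ell$ labels $(\cdot,c')$ present on the edges $uv$; from $v$'s side that already forces $d(\tau,c',v,u)=\ell$, i.e.\ $D(\tau)\geq\ell$, which is exactly what Fact~\ref{badfact} is trying to beat. Nothing in the tree iteration reduces this per-layer cost, so the relation ``$D^r\geq\ell$'' you posit is not the binding constraint; the binding constraint is already violated at a single layer. Moreover your from-scratch covering argument has no mechanism to produce the $\ln\Delta$ factor in $\ell$ (that factor is a genuine theorem about high-girth graphs, not a degree-counting artefact), and you acknowledge you cannot see how the exponent $15/16$ would emerge. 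The missing idea is precisely the bootstrap: compose a known $D=1$ hard instance with a colour-refinement lift that trades a $\sqrt2$ factor in the list size for one multiplicative step toward $D=\ell$.
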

 
The function $f$ will be defined precisely in Example \ref{counterexample2}.

Clearly, \textcolor{defaultcolor}{Fact \ref{badfact}} requires us to restrict the types of edge labellings we consider. Moving forward, we restrict the type of edge labelling that is allowed on our graph by imposing an upper bound on the conflict degree. Such an approach has been taken in \cite{local-con-colour}. Note that since we consider lists of size $\ell = (2\sqrt{2} + o(1))\sqrt{\Delta/\ln\Delta}$, \textcolor{defaultcolor}{Fact \ref{badfact}} means we must restrict the conflict degree to be at most $\ell^{15/16} \approx \Delta^{15/32}$. Unfortunately we run into problems even when allowing the conflict degree to be this large. Instead we aim for something closer to $\Delta^{1/4}$, we make no attempt to determine if this is optimal. We arrive at the following \textcolor{defaultcolor}{generalization} of \textcolor{defaultcolor}{Theorem \ref{adaptable-theorem}}:

\begin{thm}\label{conflict-theorem}
For any $\epsilon > 0$, there exists $\Delta_0$ such that if $G$ is a multigraph with maximum degree $\Delta > \Delta_0$, no cycles of length 3 or 4, an assignment of lists $L(v)$ for each vertex $v$ such that $|L(v)| \geq (2\sqrt{2}+\epsilon)\sqrt{\Delta/\ln\Delta}$, and an edge labelling $\tau$ with $D(\tau) \leq \Delta^{\frac{1}{4}\epsilon^2/(\epsilon + 5)^2}$, then there exists a proper conflict colouring of $G$ from these lists.
\end{thm}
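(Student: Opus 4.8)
The plan is to prove Theorem~\ref{conflict-theorem} by the semi-random method (the R\"odl nibble), following the overall shape of Kim's argument \cite{kim95} for list colouring graphs of girth at least five, but adapting each step to conflict colouring in the spirit of Molloy and Thron \cite{molloy-thron}. Write $k$ for the list-size lower bound, and for a vertex $v$ and colour $c$ call the number of edges $e=vu$ whose $v$-coordinate label equals $c$ and whose $u$-coordinate label still lies in $u$'s current list the \emph{conflict degree of $c$ at $v$}; since each edge at $v$ carries a single $v$-coordinate label, $\sum_{c\in L(v)}(\text{conflict degree of }c\text{ at }v)\le\deg(v)\le\Delta$. First I would prune: delete from every list each colour whose conflict degree (in either coordinate) exceeds $T:=c_1\Delta/k$, for a constant $c_1$ to be fixed. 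At most $\Delta/T=k/c_1$ colours are deleted from any list, so each list still has size at least $\ell_0:=(1-1/c_1)k$, and afterwards every surviving colour has conflict degree at most $d_0:=T$. This pruning is what pins down the constant: the iteration below needs $y_0:=d_0/\ell_0$ to stay below $\ln\ell_0\approx\tfrac12\ln\Delta$, and since $y_0=\frac{c_1^{2}}{c_1-1}\cdot\frac{\Delta}{k^{2}}$ this forces $k^{2}\ge\frac{2c_1^{2}}{c_1-1}\cdot\frac{\Delta}{\ln\Delta}$; the coefficient $\frac{2c_1^{2}}{c_1-1}$ is minimised at $c_1=2$, where it equals $8$, giving precisely $k\ge 2\sqrt2\,\sqrt{\Delta/\ln\Delta}$, and the extra $\epsilon$ supplies the slack eaten by this pruning loss, by the concentration errors below, and by the final step.

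Then I would run the nibble. Maintain, for $i=0,1,2,\dots$, the invariants that every still-uncoloured vertex has a list of size at least $\ell_i$, that every pair (uncoloured vertex, colour in its list) has conflict degree at most $d_i$, and that every uncoloured vertex has at most $\Delta_i$ live edges in total. In round $i$: activate each uncoloured vertex independently with a small probability $p=\Delta^{-o(1)}$ (say $p=1/\ln\Delta$); give each activated vertex a colour drawn uniformly from its current list; an activated vertex becomes permanently coloured with its choice unless that choice conflicts with a neighbour's tentative colour, after which an equalizing coin uncolours a vertex with a small extra probability so that the chance of staying coloured is the same for every vertex and colour; finally remove the coloured vertices and delete from each surviving list every colour now forbidden by a coloured neighbour. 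With $y_i:=d_i/\ell_i$, the first-moment estimates give recursions of the form $\ell_{i+1}\approx\ell_i\exp(-py_ie^{-py_i})$, $d_{i+1}\approx d_i(1-pe^{-py_i})\exp(-py_ie^{-py_i})$ and $\Delta_{i+1}\approx\Delta_i(1-pe^{-py_i})$; in particular $y_{i+1}\approx y_i(1-pe^{-py_i})$, so $y_i\to0$, and the telescoping identity $\ln(\ell_0/\ell_i)\approx y_0-y_i\le y_0$ keeps $\ell_i\ge\ell_0 e^{-y_0}$, which by the choice of $k$ and $T$ is $\Delta^{\Omega(\epsilon)}$, so the lists never empty. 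I would run the iteration for $\mathrm{poly}(\ln\Delta)$ rounds, long enough that $\Delta_r$ drops below $\ell_r^{2}/(4e)$ (possible since $\Delta_i$ keeps losing a factor $\approx1-p$ per round while $\ell_i$ stabilises once $y_i$ is small), and then finish with the Lov\'asz Local Lemma: colour the remaining vertices independently and uniformly from their lists, observe that the event that a given live edge is violated has probability at most $1/\ell_r^{2}$ and is mutually independent of all but at most $2\Delta_r$ such events, and apply the local lemma.

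The real content, and the step I expect to be hardest, is proving that these invariants survive round by round: that $|L_{i+1}(v)|$, the conflict degrees, and the live degrees are all concentrated around the conditional expectations driving the recursions. Both hypotheses on $G$ are used here. Having no cycle of length $3$ or $4$ means that no two neighbours of $v$ are adjacent and no two neighbours of $v$ have a common neighbour other than $v$, so each summand of, say, $|L_{i+1}(v)|=\sum_{c\in L_i(v)}\mathbf{1}[c\text{ survives at }v]$ is determined by a small and well-separated set of random choices; this — together with the conflict-degree bound — is what keeps the relevant Talagrand parameters (certificate sizes and Lipschitz constants) bounded, and is why this particular girth restriction, rather than a weaker one, is needed.

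The role of $D(\tau)\le\Delta^{\epsilon^{2}/(4(\epsilon+5)^{2})}$ is to bound the Lipschitz constant directly: changing one vertex's colour can forbid or unforbid a colour across at most $D(\tau)$ parallel edges joining it to any fixed vertex, so a single coordinate change moves each monitored quantity by only $O(D(\tau))$. Example~\ref{counterexample1} shows such a restriction is unavoidable: if $D(\tau)$ were of order $k$ the threats contributed by parallel edges between one pair of vertices would cease to behave even approximately independently, the first-moment estimates would be wildly wrong, and the conclusion would fail outright. Granting these facts, a suitable form of Talagrand's inequality gives a per-vertex, per-round failure probability of $\exp(-\Delta^{\Omega(1)})$; since $D(\tau)$ is only a polynomially small power of $\Delta$ while the relevant means run from $\Delta^{1/2\pm o(1)}$ down to $\Delta^{\Omega(\epsilon)}$, the deviations accumulated over the $\mathrm{poly}(\ln\Delta)$ rounds remain below the $\Delta^{\Omega(\epsilon)}$ of slack in the list length — and this balancing is exactly what fixes the admissible exponent of $D(\tau)$ in terms of $\epsilon$ (we do not try to optimise it). Lastly, to keep everything valid no matter how many vertices $G$ has, one notes that each vertex's failure event depends only on a bounded-radius ball around it and applies the Lov\'asz Local Lemma over the vertices in place of a naive union bound.
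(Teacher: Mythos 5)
Your proposal follows essentially the same route as the paper: prune colours whose conflict degree exceeds a threshold $\Theta(\Delta/k)$ with the pruning constant tuned to $2$ (exactly reproducing the paper's Modification~\ref{mainmod} and Claim~\ref{truncationlem}, and the optimization in Remark~\ref{remark1} that yields $2\sqrt{2}$), then run a semi-random nibble with equalizing coin flips, maintain the invariants $\ell_i,d_i$ corresponding to the paper's $L_i,T_i$, use the girth condition and the $D(\tau)$ bound to control Talagrand's Lipschitz and certificate parameters (matching the role of $t_i(v,u,c)$ in Observation~\ref{mainobserve} and Lemma~\ref{concentrationlem}), and finish by the Local Lemma once the remaining conflict density is small enough (the paper's Lemma~\ref{reedlem}). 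The only visible deviations are cosmetic: you describe a non-wasteful update rule (delete colours only for retained neighbours), equalize the retention probability of a vertex's colour rather than a colour's survival in a list, and stop the iteration at $\Delta_r<\ell_r^2/(4e)$ rather than $T_r\le L_r/8$; each of these is interchangeable with the paper's choice and leads to the same conclusion.
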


The important point here is that there is a tradeoff between the bound on our conflict degree and our list sizes. We can make our list sizes arbitrarily close to $2\sqrt{2}\sqrt{\Delta/\ln\Delta}$ and the conflict degree arbitrarily close to $\Delta^{1/4}$, but not at the same time.

It is easy to see that \textcolor{defaultcolor}{Theorem \ref{conflict-theorem}} implies \textcolor{defaultcolor}{Theorem} \ref{adaptable-theorem}. The key observation is that an instance of an adaptable colouring problem is exactly an instance of a conflict colouring problem with conflict degree 1.

Notice that since \textcolor{defaultcolor}{Theorem \ref{adaptable-theorem}} is implied by \textcolor{defaultcolor}{Theorem \ref{conflict-theorem}}, this means the discussion following the statement of \textcolor{defaultcolor}{Theorem \ref{adaptable-theorem}} applies to \textcolor{defaultcolor}{Theorem \ref{conflict-theorem}} as well. That is, the choice of list sizes in \ref{conflict-theorem} is within a factor of 4 of the best possible, even when $D(\tau) \leq 1$.

As mentioned, we cannot quite allow our conflict degree to be as large as $\Delta^{1/4}$, but by allowing the list size to be large we can come close. A direct result of \textcolor{defaultcolor}{Theorem \ref{conflict-theorem}} is the following more visually appealing statement, which can be seen by taking $\epsilon=45$.

\begin{cor}\label{clean-theorem}
There exists $\Delta_0$ such that if $G$ is a multigraph with maximum degree $\Delta > \Delta_0$, no cycles of length 3 or 4, an assignment of lists $L(v)$ for each vertex $v$ such that $|L(v)| \geq 50\sqrt{\Delta/\ln\Delta}$, and an edge labelling $\tau$ with $D(\tau) \leq \Delta^{1/5}$, then there exists a proper conflict colouring of $G$ from these lists.
\end{cor}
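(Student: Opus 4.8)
The plan is to prove Theorem~\ref{conflict-theorem} directly and obtain Corollary~\ref{clean-theorem} as the stated special case $\epsilon = 45$ (so that $2\sqrt{2} + 45 < 50$ and $\tfrac14 \cdot 45^2/50^2 = 0.2025 > 1/5$, both with room to spare, absorbing the $o(1)$). Thus the real work is all in Theorem~\ref{conflict-theorem}, and I would attack it with the semi-random (``Rödl nibble'' / naive colouring) method in the style of Kim's proof of $\mathrm{ch}(G) \le (1+o(1))\Delta/\ln\Delta$ for girth-$5$ graphs, adapted to the conflict setting following Molloy--Thron. The overall architecture is an iterative partial-colouring argument: maintain for each uncoloured vertex $v$ a list $L_t(v)$ of colours still available and a bound on the number of ``conflicting neighbours'' through each colour; in each round, each vertex activates itself with a small probability, tentatively picks a uniformly random colour from its current list, and keeps that colour if doing so creates no conflict with any neighbour's tentative choice. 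One then shows that after each round the ratio (available list size)/(effective degree) grows by a constant factor $>1$, so that after $O(\ln\ln\Delta)$ rounds the surviving instance can be finished greedily.

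The key quantitative step is the single-round analysis. Fix a round where every uncoloured $v$ has $|L_t(v)| \ge \ell_t$ and, for every colour $c$ and every neighbour-set, the relevant conflict count is controlled; let the activation probability be $p$ (chosen around $\ln\Delta/\sqrt{\Delta}$-ish, tuned against the constants). For a fixed colour $c \in L_t(v)$ I would estimate the probability that $c$ is ``lost'' for $v$, i.e.\ that some neighbour $w$ gets tentatively coloured with a colour $c'$ such that the edge $vw$ carries the label making $(c,c')$ (or $(c',c)$) forbidden, \emph{and} $w$ keeps $c'$. Here the conflict-degree bound $D(\tau) \le \Delta^{1/4\,\epsilon^2/(\epsilon+5)^2}$ enters crucially: through a single neighbour $w$, the number of edges $vw$ whose label has first coordinate $c$ is at most $D(\tau)$, so the expected number of colours in $L_t(w)$ that would conflict with $v$ keeping $c$ is at most $D(\tau)\,p / \ell_t \approx D(\tau)\, p/\sqrt{\Delta/\ln\Delta}$; summing over the $\le \Delta$ neighbours of $v$ gives the expected loss of $|L_t(v)|$, which we need to be a $(1-\Omega(1))$ fraction — this is exactly where the $\sqrt{\Delta/\ln\Delta}$ list size and the $\Delta^{1/4-o(1)}$ conflict-degree ceiling must fit together, and it is why the exponent on $D(\tau)$ is forced to depend on $\epsilon$. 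Simultaneously I would track the ``degree'' of $v$ in the conflict hypergraph, showing the effective number of neighbours that can still threaten any particular surviving colour shrinks by the same order of magnitude, so the crucial ratio improves.

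The girth hypothesis (no $3$- or $4$-cycles) is used exactly as in Kim's argument: it makes the neighbourhoods of the neighbours of $v$ essentially disjoint, so the events ``$w$ retains its tentative colour'' for different neighbours $w$ of $v$ are only weakly dependent, and — more importantly — second-moment / concentration estimates on the number of lost colours and on the new degrees have bounded variance. Concretely I would control everything with a few applications of Talagrand's inequality (for the list-shrinkage and degree random variables, each of which is a Lipschitz, certifiable function of the independent per-vertex choices) together with the Lovász Local Lemma to assert that with positive probability \emph{every} vertex simultaneously satisfies its target bounds; absence of short cycles keeps the dependency degree in the LLL polynomial in $\Delta$, which is all that is needed since the bad-event probabilities will be $\Delta^{-\omega(1)}$. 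I expect the main obstacle to be the bookkeeping of the conflict structure: unlike ordinary colouring, ``losing colour $c$'' depends on the \emph{labels} of the parallel edges, and a neighbour can block several colours of $v$ at once (up to $D(\tau)$ of them), so one must set up the invariants (an upper bound on $|L_t(v)|$-loss, \emph{and} a per-colour upper bound on how many neighbours still conflict through it) carefully enough that both survive the round with the same geometric improvement factor — balancing these two invariants against the activation probability $p$ and the conflict-degree exponent is the delicate part, and it is presumably what dictates the precise constant $2\sqrt2$ and the precise exponent $\tfrac14\epsilon^2/(\epsilon+5)^2$.
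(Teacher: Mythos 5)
Your derivation of Corollary~\ref{clean-theorem} from Theorem~\ref{conflict-theorem} by taking $\epsilon = 45$ is precisely the paper's argument, and the arithmetic is right: $2\sqrt{2}+45 < 50$ and $\tfrac{1}{4}\cdot 45^2/50^2 = 0.2025 > \tfrac{1}{5}$. One small quibble: there is no ``$o(1)$'' to absorb here, since Theorem~\ref{conflict-theorem} is already stated for all $\Delta > \Delta_0(\epsilon)$ with the explicit constant $2\sqrt{2}+\epsilon$.

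Since you went on to sketch how Theorem~\ref{conflict-theorem} itself would be proved, a few words on that: the overall architecture you describe (semi-random ``wasteful'' colouring, Talagrand's inequality for per-vertex concentration, the Lov\'asz Local Lemma for simultaneity, and the absence of $3$- and $4$-cycles to keep neighbours' outcomes nearly independent) is indeed the paper's. But several of your quantitative guesses would not survive being carried out. The activation probability is $K/\ln\Delta$ for a constant $K$, not anything like $\ln\Delta/\sqrt{\Delta}$. The ratio $T_i/L_i$ improves per round only by a factor $1-\Theta(1/\ln\Delta)$, not by a constant factor, so the procedure runs for $\Theta(\ln\Delta\ln\ln\Delta)$ rounds rather than $O(\ln\ln\Delta)$. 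You also omit two structural ingredients that are not optional: the preprocessing step (Modification~\ref{mainmod}) that deletes every colour $c$ for which $t_0(v,c)$ is too large --- this is exactly what costs a factor of $\sqrt{2}$ and turns the would-be constant $\sqrt{2}$ into $2\sqrt{2}$ --- and the equalizing coin flips, which force the keep-probability to be uniform over vertices and colours so that the recurrences in (\ref{eqn:eqs1}) close. None of this affects the corollary's one-line derivation, which you have right, but you would need to correct these points before the Theorem~\ref{conflict-theorem} sketch could become a proof.
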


Actually, if one is more careful with the analysis (in particular the proof of \textcolor{defaultcolor}{Lemma \ref{listsizelargelem}}), then \textcolor{defaultcolor}{Theorem \ref{conflict-theorem}} can be strengthened slightly and as a result the ``50" in \textcolor{defaultcolor}{Corollary \ref{clean-theorem}} can be improved to a ``10". We opt for the weaker statement here to keep the proof cleaner.

In the following sections we give a proof of \textcolor{defaultcolor}{Theorem \ref{conflict-theorem}} by colouring $G$ using a semi-random procedure. Similar procedures have been used in many places. The procedures in \cite{asymptotic-list-col, molloy-thron} are particularly close to the one presented here. Our analysis is heavily based on Kim's result \cite{kim95}, as presented in chapter 12 of \cite{graph-colouring-book}. One of the important differences is \textcolor{defaultcolor}{Modification \ref{mainmod}}, where we remove certain ``bad" colours before starting our procedure. This idea has been used in \cite{molloy-thron}, but in our case the situation is slightly more delicate, and so we take greater care with how many colours we choose to remove (see remark \ref{remark1}). Another key difference is a new parameter (see $t_i(v,u,c)$ defined in section \ref{parameters}) which is closely related to the conflict degree. We must ensure that this parameter is sufficiently small to guarantee concentration results of the main parameters considered in chapter 12 of \cite{graph-colouring-book}.

As promised, we now give the details of \textcolor{defaultcolor}{Example \ref{counterexample2}}, which serves as a proof of \textcolor{defaultcolor}{Fact \ref{badfact}}.

\begin{example}\label{counterexample2}
For any $\Delta$, it is easy to find simple graphs with no cycles of length 3 or 4 which are $\Delta$-regular. Let $G$ be such a graph.
By Proposition 6 of \cite{least-con-choose}, such a graph satisfies $\mathrm{ch}_{\nleftrightarrow}(G) > \sqrt{\Delta/\ln\Delta} = \ell$. This means there is at least one edge labelling of $G$ from $[\ell]$ which does not permit a proper conflict colouring of the vertices from $[\ell]$. Let $\tau$ be this edge labelling (to be clear, each label assigned by $\tau$ is a pair of colours). Note that since $G$ is simple, $D(\tau) = 1$.

Now consider the following graph $G'$, constructed by replacing each edge in $G$ by $\ell' = \ell^2$ parallel edges. Consider the set $C = [\ell']$ of $\ell'$ colours. We partition $C$ into $\sqrt{\ell'} = \ell$ contiguous pieces each of size $\ell$. Let $C_i$ be the $i^{th}$ piece of the partition. For each pair $i,j$, we define the following set of pairs of colours: $P_{ij} = \{(c_1,c_2): c_1 \in C_i, c_2 \in C_j\}$. Note that $|P_{ij}| = \ell^2$. Now we define an edge labelling $\tau'$ on $G'$ as follows. For each edge $e \in G$ such that $\tau(e) = (i,j)$, assign a unique pair from $P_{ij}$ to each of the corresponding $\ell^2$ parallel edges in $G'$.

Now we show that $\tau'$ does not permit a proper conflict colouring on $G'$. Suppose to the contrary that $\tau'$ does permit a proper conflict colouring, say $\sigma'$. We define a vertex colouring on $G$ as follows. For each $v \in G$, $\sigma'(v)$ is contained uniquely in some partition $C_i$, so we let $\sigma(v) = i$. So for any edge $e=uv \in G$ with $\tau(e) = (i,j)$, it can not be the case that $\sigma(u) = i$ while $\sigma(v) = j$ simultaneously since it is not possible for $\sigma'(u)$ to be contained in $C_i$ while $\sigma'(v)$ is contained in $C_j$ simultaneously (using the fact that $\sigma'$ is a proper conflict colouring). This means $\tau$ permits a proper conflict colouring of $G$, contrary to assumption.

Note that the maximum degree of $G'$ is $\Delta' = \Delta \times \ell' = \Delta \times \ell^2 = \Delta^2/\ln\Delta$. Using this fact, let us compute $\ell'$ as a function of $\Delta'$. We have:
\begin{align}
    \sqrt{\frac{\Delta'}{\ln\Delta'}} = \sqrt{\frac{\Delta^2}{\ln\Delta\ln\Delta'}} \label{listexpression}
\end{align}
Note that $\ln\Delta' = 2\ln\Delta - \ln\ln\Delta = (2 - o(1))\ln\Delta$. So continuing (\ref{listexpression}) we get:
\begin{align}
    \sqrt{\frac{\Delta^2}{(2-o(1))\ln^2\Delta}} = \frac{1}{\sqrt{2}-o(1)}\frac{\Delta}{\ln\Delta} = \frac{1}{\sqrt{2}-o(1)}\ell'. \nonumber
\end{align}
So after rearranging terms we have $\ell' = (\sqrt{2} - o(1))\sqrt{\Delta'/\ln\Delta'}$.

Now let us examine the relationship between the new conflict degree and the previous. By construction, each edge in $G$ is replaced by $\ell^2$ parallel edges, and each colour conflicting with say $k$ colours in $G$ will be replaced by a colour conflicting with $k \times \ell$ colours in $G'$. I.e. we have $D(\tau') = D(\tau) \times \ell = \sqrt{\ell'}$.

Let us take a moment to realize what was just accomplished. We started with an uncolourable graph and obtained another uncolourable graph, \textcolor{defaultcolor}{increasing} the coefficient on our list sizes by approximately $\sqrt{2}$, but this came at the cost of increasing the conflict degree to the square root of the new list size.

In fact, the same construction can be applied several times. Let $\Delta_i, \ell_i, \tau_i$ be the maximum degree, list size, and edge labelling on the resulting graph after the $i^{th}$ application. Initially we have:
\begin{align}
    \ell_0&=\ell, \nonumber \\
    \Delta_0 &= \Delta, \nonumber \\
    D(\tau_0) &= D(\tau). \nonumber
\end{align}
And by construction we have the following recurrences for $i > 0$:
\begin{align}
    \ell_{i}&=\ell_{i-1}^2, \nonumber \\
    \Delta_{i} &= \Delta_{i-1} \times \ell_{i-1}^2, \nonumber \\
    D(\tau_{i}) &= D(\tau_{i-1}) \times \ell_{i-1}. \nonumber
\end{align}

From these recurrences we have $D(\tau_i) = \ell_i^{1-2^{-i}}$ which the reader can easily verify by reasoning inductively. Also, the same analysis as in (\ref{listexpression}) reveals that $\ell_i = (\sqrt{2} - o(1))^i\sqrt{\Delta_i/\ln\Delta_i}$. So after repeating 4 times, we get an uncolourable graph with maximum degree $\Delta$, lists of size $\ell > (2\sqrt{2}+o(1))\sqrt{\Delta/\ln\Delta}$, and an edge labelling with conflict degree $\ell^{15/16}$. 

More generally, for any constant $\alpha \geq 1$ we have to repeat the construction precisely $\lfloor \log_{\sqrt{2}}\alpha\rfloor + 1$ times to obtain an uncolourable graph with maximum degree $\Delta$ and lists of size at least $\alpha\sqrt{\Delta/\ln\Delta}$. So the function $f$ from \textcolor{defaultcolor}{Fact \ref{badfact}} is given by:

$$ f(\alpha) = 1 - 2^{-\lfloor \log_{\sqrt{2}}\alpha\rfloor - 1}.$$
\end{example}

\section{Probabilistic Tools}

Here we describe three probabilistic tools which will be useful in proving our main theorem. The first is the Lov{\'a}sz Local Lemma which is used several times in the proof. There are several versions of the lemma, the version we present here is not the most powerful but nevertheless it is sufficient for our purposes. The original proof can be found in \cite{locallemmaweak}.

\medskip
\theoremstyle{plain}
\newtheorem*{lll*}{The Lov{\'a}sz Local Lemma}

\begin{lll*}
Consider a set $\mathcal{E}$ of ``bad" events in a probability space, such that for each $A \in \mathcal{E}$:

\begin{enumerate}[label=\alph*)]
    \item $\mathbf{Pr}(A) \leq p < 1$, and 
    \item $A$ is mutually independent of a set of all but at most $d$ of the other events.
\end{enumerate}

\noindent If $4pd \leq 1$ then with positive probability, none of the events in $\mathcal{E}$ occur.
\end{lll*}

\medskip
We use two concentration bounds, the first of which is known as the Chernoff Bound. The original statement can be found in \cite{chernoffstrong}, we use the version which can be found in chapter 5 of \cite{graph-colouring-book}.

\medskip
\theoremstyle{plain}
\newtheorem*{cb}{The Chernoff Bound}

\begin{cb}
    For any $t \geq 0$: $$\mathbf{Pr}(|BIN(n,p) - np| > t) \leq 2e^{-((1+\frac{t}{np})\ln(1+\frac{t}{np})-\frac{t}{np})np}$$
\end{cb}

\medskip
Our second concentration bound is Talagrand's Inequality. The original statement can be found in \cite{talagrandstrong} but once again we use the weaker version which appears in \cite{talagrandweak}.

\medskip
\theoremstyle{plain}
\newtheorem*{ti*}{Talagrand's Inequality}

\begin{ti*}
Let $X$ be a non-negative random variable determined by the independent trials $T_1, T_2, \dots, T_n$. Suppose that for every set of possible outcomes of the trials, we have:

\begin{enumerate}[label=\alph*)]
    \item changing the outcome of any one trial can affect $X$ by at most $q$, and
    \item for each $s > 0$, if $X \geq s$, then there is a set of at most $rs$ trials whose outcomes certify that $X \geq s$ (we make this precise below).
\end{enumerate}

\noindent Then for any $t \geq 0$, we have

$$\mathbf{Pr}(|X - \mathbf{E}(X)| > t + 20q\sqrt{r\mathbf{E}(X)} + 64q^2r) \leq 4e^{-\frac{t^2}{8q^2r(\mathbf{E}(X) + t)}}$$
\end{ti*}

\medskip
Let us be more precise with condition ($b$) above. Suppose there is a set $I \subseteq [n]$, and outcomes $\{t_i : i \in I\}$ such that $X \geq s$ whenever $T_i = t_i$ for each $i \in I$. We say that the set of outcomes $\{t_i : i \in I\}$ for trials $\{T_i : i \in I\}$ \textit{certify} that $X \geq s$.

\section{Proof of Main Theorem}

Fix $\epsilon> 0$. Let $G = (V,E)$ be a multigraph with no cycles of length 3 or 4 and maximum degree $\Delta$. Let $\tau: E \to \mathbb{N} \times \mathbb{N}$ be an edge labelling on $G$ with $D(\tau) \leq \Delta^{\frac{1}{4}\epsilon^2/(\epsilon + 5)^2}$. Instead of specifying a value for $\Delta_0$, throughout the proof we will use the fact that $\Delta$ is a sufficiently large constant.

\bigskip
\noindent
\textbf{A brief comment on terminology and notation.}

For convenience, if $u$ and $v$ are neighbouring vertices we define the set $\Tau(u,v)$ as the set of all labels between $u$ and $v$. That is, $\Tau(u,v)=\{\tau(e): e \textit{ is an edge between u and v} \}$. By convention, all pairs of colours in $\Tau(u,v)$ are ordered so that if $(c,c') \in \Tau(u,v)$, then $c$ corresponds to the colour disallowed for $u$ while $c'$ corresponds to the colour disallowed for $v$. We write $\Tau(v,u)$ to indicate that the pairs are ordered the opposite way. 

Also recall that each label on an edge $e$ constrains the colours assigned to the endpoints of $e$. For this reason it is useful to think of a label on an edge as a constraint, and so throughout the proof we use the terms ``label" and ``constraint" interchangeably. We also speak of the endpoints $u,v$ of a constraint $(c,c')$, and by this we simply mean that $(c,c') \in \Tau(u,v)$. Similarly for neighbouring vertices $u,v$ we write that $u$ and $v$ are constrained by $(c,c')$ to mean that $(c,c') \in \Tau(u,v)$. This terminology will simplify our discussion significantly.

\bigskip
Consider the following semi-random procedure for colouring $G$. As mentioned, during the procedure we maintain a list $L(v)$ for each vertex $v$, which represents the set of colours available to $v$.

\bigskip
\noindent
Wasteful Colouring Procedure ($i^{th}$ iteration)

\begin{enumerate}
    \item Truncate the lists so they all have size $L_{i-1}$ (to be defined later)
    \item For each uncoloured vertex $v$, activate $v$ with probability $\frac{K}{\ln \Delta}$ ($K$ is a constant which will be defined precisely later).
    \item For each activated vertex $v$, assign $v$ a colour uniformly at random from $L(v)$.
    \item For each activated vertex $v$: If $v$ is assigned the colour $c$, for each uncoloured neighbour $u$ of $v$ and each colour $c'$ such that $(c, c') \in \Tau(v, u)$, remove $c'$ from $L(u)$.
    \item For each pair of \textcolor{defaultcolor}{adjacent} vertices $v$ and $u$ which receive colours $c$ and $c'$ respectively, uncolour both $v$ and $u$ if $(c, c') \in \Tau(v,u)$.
    \item Conduct an ``equalizing" coin flip (to be defined later) for each vertex $v$ and colour $c \in L(v)$, removing $c$ from $L(v)$ if it loses the coin flip.
\end{enumerate}

\medskip
Now consider the following parameters which will allow us to analyze the procedure. Each parameter refers to the value at the end of iteration $i$, for $i > 0$. For $i = 0$, each parameter refers to the value before beginning the procedure.

\begin{gather}\label{parameters}
\begin{align}
    L_i(v) &= v'\text{s list of colours} \nonumber \\
    \ell_i(v) &= |L_i(v)| \nonumber \\
    \begin{split}
        N_i(v, u, c) &= \begin{cases} 
                              \{(c, c') \in \Tau(v,u): c' \in L_i(u)\} & \text{if } u \text{ is an uncoloured neighbour of } v \\
                             \emptyset & \text{otherwise}
                           \end{cases}
    \end{split} \nonumber \\
    N_i(v,c) &= \bigsqcup_{u \in N(v)} N_i(v,u,c) \nonumber \\
    t_i(v,u,c) &= |N_i(v,u,c)| \nonumber \\
    t_i(v, c) &= |N_i(v,c)| = \sum_{u \in N(v)}t_i(v,u,c) \nonumber
\end{align}
\end{gather}

\noindent
In the definition of $N_i(v,c)$, the symbol $\bigsqcup$ denotes the disjoint union. Note that $v$ could have the constraint (red, red) between two different neighbours. But these two constraints are different because they have different endpoints, so we take the disjoint union to include both of them.

\begin{observation}\label{mainobserve} \textit{For each $v$, neighbour $u \in N(v)$, and $c \in L(v)$, we initially have $t_0(v,u,c) \leq D(\tau)$. Since this parameter can only decrease, we have $t_i(v,u,c) \leq D(\tau)$ for every relevant $i$. Because of our restriction on the conflict degree this also means $t_i(v,u,c) \leq \Delta^{\frac{1}{4}\epsilon^2/(\epsilon + 5)^2}$.} \\ 

\indent Throughout the proof, many of our results rely on the list sizes being sufficiently large. More specifically, our lemma statements will specify that $L_i \geq \Delta^{\frac{1}{2}\epsilon^2/(\epsilon + 4)^2}$ (as we will see, this requirement is fulfilled by \textcolor{defaultcolor}{Lemma \ref{listsizelargelem}}). 

With list sizes this large, we have:
\begin{align}
    t_i(v,u,c) \leq L_i^{1/2 - \beta}, \text{ where } \beta = \frac{1}{2} - \frac{1}{2}\bigg(\frac{\epsilon + 4}{\epsilon + 5}\bigg)^2. \label{betadef}
\end{align} 
We will also be concerned with the value $T_i$ (to be defined later), which we also require to be sufficiently large. If $T_i \geq \Delta^{\frac{1}{2}\epsilon^2/(\epsilon + 4)^2}$, then again we have:
\begin{align}
    t_i(v,u,c) \leq T_i^{1/2 - \beta}, \label{observebound}
\end{align} 
for the same value of $\beta$. The reader should not dwell on the precise value of $\beta$, the only important detail is that $\beta$ is in the interval $(0, \frac{1}{2})$. \\
\end{observation}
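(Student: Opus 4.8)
The plan is to reduce all three assertions of the observation to the single inequality $t_i(v,u,c)\le D(\tau)$, which says that our new parameter never exceeds the conflict degree; the two displayed bounds (\ref{betadef}) and (\ref{observebound}) then drop out by substituting the hypotheses on $D(\tau)$, $L_i$ and $T_i$ and performing a short exponent computation with the specific $\beta$ chosen above.

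For the base case I would unfold the definitions: $N_0(v,u,c)=\{(c,c')\in\Tau(v,u):c'\in L_0(u)\}$ when $u$ is an uncoloured neighbour of $v$, and $\emptyset$ otherwise. Since $\Tau(v,u)$ is a \emph{set} of labels, distinct members of $N_0(v,u,c)$ are distinct pairs $(c,c')$, each realised by at least one edge $e=uv$ with $\tau(e)=(c,c')$, so $t_0(v,u,c)=|N_0(v,u,c)|\le d(\tau,c,v,u)\le D(\tau)$ by Definition \ref{conflict-degree-definition}; the bound is trivial when $u$ is coloured. For later iterations I would note that the only steps of the Wasteful Colouring Procedure that alter a vertex's list --- the truncation in step 1 and the deletions in steps 4 and 6 --- only \emph{remove} colours, so $L_i(u)\subseteq L_{i-1}(u)\subseteq\cdots\subseteq L_0(u)$ for every vertex $u$. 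Hence $N_i(v,u,c)\subseteq N_0(v,u,c)$ (immediate when $u$ is uncoloured at the end of iteration $i$, and when $u$ is coloured the left side is $\emptyset$), giving $t_i(v,u,c)\le t_0(v,u,c)\le D(\tau)$ for every relevant $i$; combined with the hypothesis $D(\tau)\le\Delta^{\frac14\epsilon^2/(\epsilon+5)^2}$ this is the first assertion.

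Finally, the two displayed inequalities are pure exponent arithmetic. Since $\beta=\frac12-\frac12\big(\frac{\epsilon+4}{\epsilon+5}\big)^2$, we have $\frac12-\beta=\frac12\big(\frac{\epsilon+4}{\epsilon+5}\big)^2>0$, so whenever $L_i\ge\Delta^{\frac12\epsilon^2/(\epsilon+4)^2}$,
$$L_i^{1/2-\beta}\ \ge\ \Delta^{\frac12\frac{\epsilon^2}{(\epsilon+4)^2}\cdot\frac12\frac{(\epsilon+4)^2}{(\epsilon+5)^2}}\ =\ \Delta^{\frac14\frac{\epsilon^2}{(\epsilon+5)^2}}\ \ge\ t_i(v,u,c),$$
which is (\ref{betadef}), and the identical computation with $T_i$ in place of $L_i$ yields (\ref{observebound}). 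There is no genuine obstacle here: the observation is essentially bookkeeping, and the only points requiring care are the monotonicity claim that lists only shrink (so that $t_i$ inherits the initial bound) and the fact that $\beta$ has been defined precisely so that the exponent $\frac12\big(\frac{\epsilon+4}{\epsilon+5}\big)^2$ of $L_i$ lands exactly on the exponent $\frac14\epsilon^2/(\epsilon+5)^2$ forced by the conflict-degree restriction --- in particular one should keep track of the direction of each inequality and that all these exponents are positive so that $\Delta>\Delta_0$ large makes them go the right way, and that $\beta\in(0,\frac12)$ as asserted.
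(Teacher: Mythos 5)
Your proposal is correct and takes essentially the same approach that the paper implicitly takes: the observation is indeed pure bookkeeping, resting on (i) $t_0(v,u,c)\le d(\tau,c,v,u)\le D(\tau)$ directly from Definition \ref{conflict-degree-definition}, (ii) monotonicity of the lists under the Wasteful Colouring Procedure (steps 1, 4, 6 only delete colours, and once a vertex is coloured at the end of an iteration it stays coloured), and (iii) the exponent identity $\frac12-\beta=\frac12\big(\tfrac{\epsilon+4}{\epsilon+5}\big)^2$ so that $\Delta^{\frac12\epsilon^2/(\epsilon+4)^2\cdot(\frac12-\beta)}=\Delta^{\frac14\epsilon^2/(\epsilon+5)^2}$, which is exactly the conflict-degree cap; your arithmetic checks out, as does the final remark that $\beta\in(0,\frac12)$.
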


For our procedure to work we require that certain parameters stay reasonably close to their expected values (i.e. we require our parameters to be strongly concentrated). It turns out that the parameter $t_i(v,c)$ is not strongly concentrated as it can be affected by a large amount when $c$ is assigned to $v$ in iteration $i$ (in particular it drops to 0). In fact, assigning colours other than $c$ to $v$ could have the same effect. For this reason, the following closely related parameters will be considered at the end of iteration $i+1$ (the purpose of these parameters is to simply ignore the effect of assigning a colour to $v$):

\begin{gather}
\begin{align}
    \begin{split}
        t'_{i+1}(v, u, c) =& \text{ } 0 \text{, if } u \text{ retained a colour during iteration } i+1, \\
        &\text{ otherwise the number of labels } (c, c') \text{ counted by } t_i(v,u,c) \text{ such that in iteration } i+1 \text{:} \\
        &\qquad \bullet \quad c' \text{ was not removed from } u \text{'s list because of a conflict with a neighbour } w \neq v \\
        &\qquad \bullet \quad c' \text{ was not removed from } u \text{'s list because of an equalizing coin flip}
    \end{split} \nonumber \\
    t'_{i+1}(v,c) = &\sum_{u \in N(v)}t'_{i+1}(v,u,c) \nonumber
\end{align}
\end{gather}

\medskip
Now it turns out that the parameter $t'_{i+1}(v,c)$ is indeed concentrated, as we will show in \textcolor{defaultcolor}{Lemma \ref{concentrationlem}}. Notice that $t_{i+1}(v,c) \leq t'_{i+1}(v,c)$. As we will see, we only need an upper bound on $t_{i+1}(v,c)$. So as long as we have a sufficiently strong bound on $t'_{i+1}(v,c)$ we should be able to achieve this.

\medskip
Now, our goal is to perform several iterations of the procedure so that eventually much of the graph is coloured, and colouring the remaining graph becomes easy. Intuitively, by the end of the procedure we would like that for each vertex $v$, the parameter $\ell_i(v)$ be large while for each colour $c$ in $v$'s list, $t_i(v,c)$ should be quite small. This way, we have several options when choosing a colour for $v$, and each is unlikely to conflict with any neighbour. This will improve our chances of finding a proper conflict colouring. The next lemma makes this precise. 

This result was introduced by Reed \cite{reedlemma}, and has since been seen in modified form in various places (see \textcolor{defaultcolor}{Proposition 2.1} of \cite{asymptotic-list-col}, \textcolor{defaultcolor}{Lemma 3.1} of \cite{molloy-thron}). Likewise, we present a modified version which is suited for our purposes; the proof is nearly identical to the original.


\begin{lem}\label{reedlem}
Let $G = (V,E)$ be a multigraph with edge labelling $\tau: E \to \mathbb{N} \times \mathbb{N}$ and a list of colours $L(v)$ for each vertex $v \in G$. Suppose for each vertex $v \in G$, $|L(v)| \geq \ell$. Also suppose that for each vertex $v$, and each colour $c \in L(v)$, there are at most $\ell/8$ edges $e = vu$ such that $\tau(e)=(c, c')$ where $c' \in L(u)$. Then there is a proper conflict colouring of $G$ from these lists.
\end{lem}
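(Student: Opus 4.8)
The plan is to apply the Lovász Local Lemma. Consider the random colouring $\sigma$ in which each vertex $v$ independently picks a colour from $L(v)$ uniformly at random. For each edge $e = uv$ with label $\tau(e) = (c,c')$, let $A_e$ be the bad event that $\sigma(u) = c$ and $\sigma(v) = c'$. A colouring avoiding all bad events is exactly a proper conflict colouring, so it suffices to show that with positive probability none of the $A_e$ occur. First I would bound $\mathbf{Pr}(A_e)$: since $u$ and $v$ choose independently from lists of size at least $\ell$, we have $\mathbf{Pr}(A_e) \leq 1/\ell^2 =: p$. The subtle point here is that some edges $e = uv$ could be ``redundant'' in the sense that $c' \notin L(u)$ or $c \notin L(v)$ — but such events have probability $0$ and can simply be discarded from $\mathcal{E}$, or equivalently the bound $p = 1/\ell^2$ still holds trivially.

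Next I would bound the dependency degree $d$. The event $A_e$ for $e = uv$ depends only on the colours of $u$ and $v$, so it is mutually independent of all events $A_{e'}$ for which $e'$ shares no endpoint with $e$. Thus $d$ is at most the number of edges incident to $u$ or $v$, minus one. Here is where the hypothesis on conflict-degree-like quantities enters: naively $\deg(u) + \deg(v)$ could be as large as $2\Delta$, which is far too big relative to $p = 1/\ell^2$. The key observation is that we do not need to list every incident edge — we only need those edges $e'$ that could actually force $A_{e'}$ to have positive probability jointly, but more to the point, the standard fix is that $A_e$ is in fact determined by just the pair $(\sigma(u),\sigma(v))$, and for the Local Lemma we may take as the ``dependency set'' of $A_e$ all $A_{e'}$ sharing an endpoint. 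So I must count, for the vertex $u$, how many edges $e' = uw$ have the property that $A_{e'}$ is a genuine (positive-probability) bad event. Reading the hypothesis: for each colour $a \in L(u)$ there are at most $\ell/8$ edges at $u$ whose $u$-label is $a$ with the partner colour still in the neighbour's list. Summing over the at most $\ell$ colours in $L(u)$ gives at most $\ell \cdot (\ell/8) = \ell^2/8$ relevant edges at $u$, and similarly at $v$. Hence $d \leq \ell^2/8 + \ell^2/8 = \ell^2/4$.

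Finally I would verify the Local Lemma hypothesis $4pd \leq 1$: we have $4 p d \leq 4 \cdot \frac{1}{\ell^2} \cdot \frac{\ell^2}{4} = 1$, which is exactly the required inequality. Therefore with positive probability no bad event occurs, giving a proper conflict colouring. The main obstacle — and the place the proof genuinely uses the structure of conflict colouring rather than ordinary colouring — is the dependency count: one must argue carefully that only edges $e'$ at $u$ whose $u$-side colour lies in $L(u)$ and whose partner colour lies in the neighbour's current list can contribute a positive-probability bad event, so that the per-colour bound of $\ell/8$ can be summed over the $\ell$ colours rather than over all $\Delta$ incident edges. I would also double-check the edge cases: vertices of degree $0$ or $1$, loops (excluded since there are no $1$-cycles in the relevant sense, or they would make the instance trivially uncolourable and are ruled out by the hypothesis since a loop at $v$ labelled $(c,c)$ with $c \in L(v)$ would violate the $\ell/8$ bound only if $\ell < 8$), and the case $\ell < 8$ where $\ell/8 < 1$ forces there to be no constrained edges at all, making any colouring proper.
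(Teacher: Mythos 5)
Your approach is the same as the paper's (Lov\'asz Local Lemma over the bad events $A_e$, probability bound $1/\ell^2$, dependency count $\ell^2/4$), but there is one genuine gap: you did not truncate the lists, and as a result your dependency count is wrong as written. You bound $\mathbf{Pr}(A_e) \le 1/\ell^2$ using $|L(v)| \ge \ell$, but then when counting the dependency set at $u$ you say ``summing over the at most $\ell$ colours in $L(u)$''. The hypothesis is $|L(u)| \ge \ell$, not $\le \ell$, so $|L(u)|$ is unbounded and the sum could be much larger than $\ell^2/8$. The symmetric Local Lemma you're invoking needs uniform bounds $p$ and $d$ with $4pd \le 1$; with $p = 1/\ell^2$ and $d$ potentially of order $|L(u)|\cdot \ell/8$ the product is not controlled.

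The fix is what the paper does in its very first sentence: truncate each $L(v)$ to exactly $\ell$ colours before sampling. Truncation can only shrink the set of edges $e = vu$ with $\tau(e) = (c,c')$ and $c' \in L(u)$, so the hypothesis ``at most $\ell/8$ per colour'' is preserved, and now the sum over colours in $L(u)$ really is over exactly $\ell$ colours, giving $|D_u| \le \ell^2/8$. With that one change your argument is identical to the paper's. (One could also avoid truncation by tracking per-event probabilities $1/(|L(u)||L(v)|)$ and using the asymmetric/general Local Lemma, since the larger dependency count at a vertex with a long list is exactly compensated by the smaller probability, but that is more work than simply truncating.)
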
 

\begin{proof}
Arbitrarily truncate the lists to be exactly of size $\ell$, and assign each vertex a colour uniformly at random from its list. Let $\sigma$ be the resulting vertex colouring on $G$.

For an edge $e$ with endpoints $v$ and $u$, let $\tau(e) = (c, c')$. We consider the event $A_e$ where $\sigma(v) = c$ and $\sigma(u) = c'$. Let $\mathcal{E}$ be the set of all such events. If $c \notin L(v)$ or $c' \notin L(u)$, then $\mathbf{Pr}(A_{e}) = 0$, and this is mutually independent of any set of events. Otherwise, $\mathbf{Pr}(A_{e}) = 1/\ell^2$.

Consider the following sets of events:
$$D_v = \{A_{f} : f = vv' \neq e, (c_1, c_2) = \tau(f) \textit{ and } c_2 \in L(v')\}$$
$$D_u = \{A_{f} : f = uu' \neq e, (c_1, c_2) = \tau(f) \textit{ and } c_2 \in L(u')\}$$

\medskip
\noindent
(note that these sets could contain events indexed by edges parallel to $e$). Observe that $A_e$ is mutually independent of all events in $\mathcal{E} - D_v - D_u$ by the Mutual Independence Principle (see chapter 4 of \cite{graph-colouring-book}).

By assumption, for each colour $c_1 \in L(v)$, there could be at most $\ell/8$ edges $e=vv'$ such that $\tau(e) = (c_1, c_2)$ and $c_2 \in L(v')$. Hence, summing over all colours in $v$'s list gives a bound of $\ell^2/8$ on the size of $D_v$, and similarly for $D_u$. So each bad event occurs with probability at most $p=1/\ell^2$ and is \textcolor{defaultcolor}{mutually independent of all but} at most $d=\ell^2/4$ other events. Therefore, $4pd \leq 4(1/\ell^2)(\ell^2/4) \leq 1$. So we can apply the Lov{\'a}sz Local Lemma \textcolor{defaultcolor}{to obtain a colouring $\sigma$ such that there is no edge $e$ with endpoints $v$ and $u$ and $\tau(e) = (c,c')$ where $\sigma(v) = c$ and $\sigma(u) = c'$, i.e. $\sigma$ is a proper conflict colouring}.
\end{proof}

The previous lemma permits us to focus on the parameters $\ell_i(v)$ and $t_i(v,c)$, but instead of keeping track of the values $\ell_i(v)$ and $t_i(v,c)$ for each $v, c$, we focus on their extreme values. We recursively define appropriate $L_i$ and $T_i$ which we would like to serve as a lower bound for $\ell_i(v)$ and an upper bound for $t_i(v,c)$, respectively. Moreover, the value $t_i(v,u,c)$ will also be of interest to us, as if this value is too large then $\ell_i(v)$ and $t_i(v,c)$ will not be sufficiently concentrated.

More precisely, we will show that with positive probability the following property holds at the end of iteration $i$:

\bigskip
\noindent
\textbf{Property P(i):} For each uncoloured vertex $v$, uncoloured neighbour $u$ of $v$, and each colour $c \in L_i(v)$,

$$\ell_i(v) \geq L_i$$

$$t_i(v,c) \leq T_i$$

\bigskip
So if for some iteration, $i$, property $P(i)$ holds and $T_i \leq \frac{1}{8}L_i$ then the colouring can be completed by \textcolor{defaultcolor}{Lemma \ref{reedlem}}.

\medskip
As mentioned in step 6 of the procedure, if $c$ is in $v$'s list near the end of an iteration (just before step 6), we remove $c$ with some probability. We do this in such a way that forces the probability of a vertex retaining a given colour during the entirety of the iteration to be the same across vertices and colours. Now we define $\mathrm{Keep}_i(v,c)$ \textcolor{defaultcolor}{to} be the probability that $c$ stays in $v$'s list by the beginning of step 6 of iteration $i+1$\textcolor{defaultcolor}{, conditional on the event that $c$ is in $v$'s list after step 1 of iteration $i+1$}. For a neighbour $u$ of $v$, there are $t_i(v,u,c)$ colours in $u$'s list which would cause $c$ to be removed from $v$'s list if any one of them is assigned to $u$. Since the size of $u$'s list is precisely $L_i$ by step 3 in the procedure, and $u$ is activated with probability $K/\ln\Delta$, we arrive at the following expression:

\begin{gather}
    \label{eqn:keepvc}
    \mathrm{Keep}_i(v,c) = \prod_{u \in N(v)}\Big(1-\frac{K}{\ln\Delta} \times \frac{t_i(v, u, c)}{L_i}\Big)
\end{gather}

\textcolor{defaultcolor}{
Now we define the following value which we claim is a lower bound on $\mathrm{Keep}_i(v,c)$ and will be easier to work with.}

\medskip
\noindent
\begin{align}
    \label{eqn:keep}
    \mathrm{Keep}_i := \Big(1-\frac{K(1+\epsilon e^{-\epsilon}/30)}{L_i\ln\Delta}\Big)^{T_i}
\end{align}

\begin{claim}\label{keepclaim}
If $P(i)$ holds, $T_i \geq \frac{1}{8}L_i$, and if $T_i,L_i \geq \Delta^{\frac{1}{2}\epsilon^2/(\epsilon + 4)^2}$, then $\mathrm{Keep}_i(v,c) \geq \mathrm{Keep}_i$ for $\Delta$ sufficiently large.
\end{claim}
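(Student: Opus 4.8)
The plan is to take logarithms and compare the two products term by term. Write $a_u = \frac{K}{\ln\Delta}\cdot\frac{t_i(v,u,c)}{L_i}$ for each $u\in N(v)$, so that $\mathrm{Keep}_i(v,c)=\prod_{u\in N(v)}(1-a_u)$ by \eqref{eqn:keepvc}, and write $b = \frac{K(1+\epsilon e^{-\epsilon}/30)}{L_i\ln\Delta}$, so that $\mathrm{Keep}_i=(1-b)^{T_i}$. For $\Delta$ large all the $a_u$ and $b$ lie in $[0,\tfrac12)$, so both sides of the claimed inequality are positive and it suffices to prove
\[
\sum_{u\in N(v)}\ln(1-a_u)\;\geq\; T_i\ln(1-b).
\]
For the right-hand side I would simply use $\ln(1-b)\leq -b$, bounding it above by $-T_i b=-\frac{K(1+\epsilon e^{-\epsilon}/30)T_i}{L_i\ln\Delta}$. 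The task then reduces to bounding the left-hand side below by this same expression.

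For the left-hand side, first note that Observation \ref{mainobserve} together with the hypothesis $L_i\geq\Delta^{\frac12\epsilon^2/(\epsilon+4)^2}$ gives $t_i(v,u,c)\leq L_i^{1/2-\beta}$, hence $a_u\leq \frac{K}{L_i^{1/2+\beta}\ln\Delta}$ for every $u$, a quantity tending to $0$. I would then apply the elementary inequality $\ln(1-x)\geq -x-x^2$, valid on $[0,\tfrac12]$, to obtain
\[
\sum_{u\in N(v)}\ln(1-a_u)\;\geq\; -\sum_{u}a_u-\sum_{u}a_u^2\;\geq\; -\frac{K\,t_i(v,c)}{L_i\ln\Delta}-\Big(\max_u a_u\Big)\sum_u a_u ,
\]
using $\sum_{u}t_i(v,u,c)=t_i(v,c)$. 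Invoking $P(i)$ to replace $t_i(v,c)$ by $T_i$ and the bound on $\max_u a_u$ above, the right-hand side is at least $-\frac{KT_i}{L_i\ln\Delta}\bigl(1+\frac{K}{L_i^{1/2+\beta}\ln\Delta}\bigr)$.

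It then remains only to check that $\frac{K}{L_i^{1/2+\beta}\ln\Delta}\leq \epsilon e^{-\epsilon}/30$ for $\Delta$ large; this holds because $\beta\in(0,\tfrac12)$, $L_i$ is at least a fixed positive power of $\Delta$, and $K$ is a constant, so the left side is $o(1)$ while the right side is a fixed positive constant. Chaining the two displays and exponentiating yields $\mathrm{Keep}_i(v,c)\geq\mathrm{Keep}_i$. I do not expect a genuine obstacle here: the only point worth flagging is that the artificial-looking factor $1+\epsilon e^{-\epsilon}/30$ in the definition of $\mathrm{Keep}_i$ is exactly the slack needed to swallow the second-order error $\sum_u a_u^2$, so the proof hinges on the $a_u$ being small (which is where the list-size hypothesis enters), and one must keep careful track of the directions of the inequalities when passing to and from logarithms. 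The hypothesis $T_i\geq\frac18 L_i$ is not actually used in this claim; it is only relevant for deciding whether the procedure should terminate via Lemma \ref{reedlem}.
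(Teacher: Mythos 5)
Your proof is correct and uses essentially the same mechanism as the paper: the factor $1+\epsilon e^{-\epsilon}/30$ is exploited as slack to absorb a second-order error, with the hypothesis $L_i\geq\Delta^{\frac12\epsilon^2/(\epsilon+4)^2}$ (via Observation~\ref{mainobserve}) guaranteeing that each per-neighbour quantity $a_u$ is $o(1)$. The paper organizes this as a per-factor inequality $\bigl(1-\frac{Kt_i(v,u,c)}{L_i\ln\Delta}\bigr)\geq\bigl(1-\frac{K(1+\epsilon e^{-\epsilon}/30)}{L_i\ln\Delta}\bigr)^{t_i(v,u,c)}$ proved through $e^{-x}$, whereas you take logarithms and aggregate first; these are cosmetically different but not genuinely distinct arguments, and your observation that $T_i\geq\frac18 L_i$ is unused here is accurate.
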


\begin{proof}
\textcolor{defaultcolor}{
It suffices to show the following for $\Delta$ sufficiently large:}
\begin{align}
    \label{eqn:intermediatekeepbound}
    \Big(1 - \frac{K t_i(v,u,c)}{L_i\ln\Delta}\Big) \geq \Big(1 - \frac{K(1+\epsilon e^{-\epsilon}/30)}{L_i\ln\Delta}\Big)^{t_i(v,u,c)}
\end{align}
\noindent
\textcolor{defaultcolor}{
As then we have:
\begin{align}
        \mathrm{Keep}_i(v,c) & = \prod_{u \in N(v)}\Big(1-\frac{K}{\ln\Delta} \times \frac{t_i(v, u, c)}{L_i}\Big) \nonumber \tag*{by (\ref{eqn:keepvc})} \\
        & \geq \prod_{u \in N(v)}\Big(1-\frac{K(1+\epsilon e^{-\epsilon}/30)}{L_i\ln\Delta}\Big)^{t_i(v,u,c)} \nonumber \tag*{by (\ref{eqn:intermediatekeepbound})} \\
        & = \Big(1-\frac{K(1+\epsilon e^{-\epsilon}/30)}{L_i\ln\Delta}\Big)^{\sum_{u \in N(v)}t_i(v,u,c)} \nonumber \\
        & = \Big(1-\frac{K(1+\epsilon e^{-\epsilon}/30)}{L_i\ln\Delta}\Big)^{t_i(v,c)} \nonumber \\
        & \geq \Big(1-\frac{K(1+\epsilon e^{-\epsilon}/30)}{L_i\ln\Delta}\Big)^{T_i} \nonumber \tag*{by property $P(i)$} \\
        & = \mathrm{Keep}_i \tag*{by (\ref{eqn:keep})} \nonumber
\end{align}}

\noindent
\textcolor{defaultcolor}{
We can show (\ref{eqn:intermediatekeepbound}) by considering two well-known facts:
\begin{align}
    \label{eqn:uppertaylorbound}
    e^{-x} & \leq 1 - x + \frac{x^2}{2} && \text{for all non-negative $x$} \\
    \label{eqn:lowertaylorbound}
    e^{-x} & \geq 1 - x && \text{for all $x$}
\end{align}}

\noindent
\textcolor{defaultcolor}{
This comes from the fact that for Taylor polynomials of $e^{-x}$ with even degree, the remainder is non-positive for all positive $x$. Similarly, the Taylor polynomials of $e^{-x}$ with odd degree have a remainder that is non-negative for all $x$.}
\textcolor{defaultcolor}{
Now consider $$x = \frac{K(1+\epsilon e^{-\epsilon}/30)t_i(v,u,c)}{L_i\ln\Delta}$$
and note that by (\ref{betadef}) and since $L_i \geq \Delta^{\frac{1}{2}\epsilon^2/(\epsilon+4)^2}$ by assumption, $x$ tends to $0$ as $\Delta$ tends to infinity. So we have for $\Delta$ sufficiently large:
\begin{align}
    \exp\bigg(-\frac{K(1+\epsilon e^{-\epsilon}/30)t_i(v,u,c)}{L_i\ln\Delta}\bigg) & \leq 1 - x + \frac{x^2}{2} \tag*{by (\ref{eqn:uppertaylorbound})} \\
    & \leq 1-\frac{x}{1+\epsilon e^{-\epsilon}/30} \nonumber \tag*{since $x$ dominates over $\frac{x^2}{2}$ for large $\Delta$} \\
    & = 1 -\frac{Kt_i(v,u,c)}{L_i\ln\Delta} \label{eqn:intermediatekeepbound2}
\end{align}
\noindent
On the other hand, 
\begin{align}
    \exp\bigg(-\frac{K(1+\epsilon e^{-\epsilon}/30)t_i(v,u,c)}{L_i\ln\Delta}\bigg) & \geq \exp\bigg(-\frac{K(1+\epsilon e^{-\epsilon}/30)}{L_i\ln\Delta}\bigg)^{t_i(v,u,c)} \nonumber \\
    & \geq \bigg(1 - \frac{K(1+\epsilon e^{-\epsilon}/30)}{L_i\ln\Delta}\bigg)^{t_i(v,u,c)} \tag*{by (\ref{eqn:lowertaylorbound})}
\end{align}}
\noindent
which proves the claim.
\end{proof}

\begin{remark}
We remark that \textcolor{defaultcolor}{Claim \ref{keepclaim}} remains true even if we replace the expression $\epsilon e^{-\epsilon}$ by any positive constant. Although this expression may seem bizarre, we choose it because we require an expression that is quite small for all values of $\epsilon$, even values which are very large.
\end{remark}

\medskip
\noindent
\textbf{Equalizing coin flips.}

We use \textcolor{defaultcolor}{Claim \ref{keepclaim}} to define our equalizing coin flips. Define the following probability: $$\mathrm{Eq}_i(v,c) := 1 - \mathrm{Keep}_i/\mathrm{Keep}_i(v,c).$$ 

We perform the equalizing coin flip described in step 6 of the procedure by removing any colour $c$ from $v$'s list with probability $\mathrm{Eq}_i(v,c)$. This way, we ensure that the probability of $c$ remaining in $v$'s list by the end of iteration $i+1$ is precisely $(1-\mathrm{Eq}_i(v,c)) \times \mathrm{Keep}_i(v,c) = \mathrm{Keep}_i$.

\medskip
Now initially we have that for each $v$, $\ell_0(v) = (2\sqrt{2}+ \epsilon)\sqrt{\Delta/\ln\Delta}$. However, for a colour $c$ in $v$'s list, $t_0(v, c)$ could be as high as $\Delta$. Recall that we would like $t_0(v,c)$ to be quite small, and it turns out that this is not good enough for our purposes, so we remove all colours $c$ from $v$'s list for which $t_0(v,c)$ exceeds some threshold (a similar technique has been used in \cite{molloy-thron}). The following claim makes this precise.

\begin{claim}\label{truncationlem}
For each vertex $v$, the number of colours $c$ for which $t_0(v,c) > \frac{1}{\sqrt{2}+\epsilon/2}\sqrt{\Delta\ln\Delta}$ is at most $(\sqrt{2} + \epsilon/2)\sqrt{\Delta/\ln\Delta}$.
\end{claim}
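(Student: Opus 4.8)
The plan is to prove this by an elementary double-counting (Markov-type) argument. The key observation is that, for a fixed vertex $v$, the quantity $t_0(v,c)$ counts a subset of the edges incident to $v$: an edge $e=vu$ with $\tau(e)=(c,c')$ (oriented towards $v$) is counted by $t_0(v,u,c)$, and only by this one term, for this one colour $c$. Hence summing $t_0(v,c)$ over all colours $c$ counts each edge at $v$ at most once, so $\sum_c t_0(v,c) \le \deg(v) \le \Delta$. Given this, a colour $c$ with $t_0(v,c)$ large must account for many of the at most $\Delta$ edges at $v$, and there simply cannot be many such colours.

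Concretely, I would argue
\[
\sum_{c} t_0(v,c) = \sum_{u\in N(v)}\sum_{c}\bigl|\{(c,c')\in\Tau(v,u): c'\in L_0(u)\}\bigr| \le \sum_{u\in N(v)} |\Tau(v,u)| \le \deg(v) \le \Delta,
\]
using that $\Tau(v,u)$ is a set of labels of edges between $v$ and $u$ and that these edge sets are disjoint over distinct neighbours $u$. Then, writing $\theta = \frac{1}{\sqrt{2}+\epsilon/2}\sqrt{\Delta\ln\Delta}$ and letting $m$ be the number of colours $c\in L_0(v)$ with $t_0(v,c) > \theta$, I get $m\theta < \sum_c t_0(v,c) \le \Delta$, whence
\[
m < \frac{\Delta}{\theta} = (\sqrt{2}+\epsilon/2)\cdot\frac{\Delta}{\sqrt{\Delta\ln\Delta}} = (\sqrt{2}+\epsilon/2)\sqrt{\Delta/\ln\Delta},
\]
which gives the claimed bound.

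There is no substantive obstacle here; the only thing to be slightly careful about is the bookkeeping in the first display — that $t_0(v,u,c)$ already restricts to labels with $c'\in L_0(u)$, and that $\Tau(v,u)$ is a set of labels rather than a multiset of parallel edges. Both of these can only decrease the left-hand side, so the bound $\sum_c t_0(v,c)\le\Delta$ is safe in full generality. This claim is precisely what licenses removing the offending colours before the procedure starts (\textbf{Modification \ref{mainmod}}): each list loses at most $(\sqrt{2}+\epsilon/2)\sqrt{\Delta/\ln\Delta}$ colours, so a list of initial size $(2\sqrt{2}+\epsilon)\sqrt{\Delta/\ln\Delta}$ retains at least $(\sqrt{2}+\epsilon/2)\sqrt{\Delta/\ln\Delta}$ of them, while every surviving colour $c$ now satisfies $t_0(v,c)\le\theta$.
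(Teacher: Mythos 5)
Your proof is correct and takes essentially the same counting argument as the paper: the paper phrases it by contradiction (more than $(\sqrt{2}+\epsilon/2)\sqrt{\Delta/\ln\Delta}$ such colours would force strictly more than $\Delta$ edges at $v$), while you phrase it as $\sum_c t_0(v,c)\le\Delta$ followed by a Markov-type bound, but the content is identical.
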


\begin{proof}
Suppose to the contrary that strictly more than $(\sqrt{2} + \epsilon/2)\sqrt{\Delta/\ln\Delta}$ colours $c$ achieve a value of $t_0(v,c) > \frac{1}{\sqrt{2} + \epsilon/2}\sqrt{\Delta\ln\Delta}$. Then the number of edges incident to $v$ is is strictly more than $(\sqrt{2} + \epsilon/2)\sqrt{\Delta/\ln\Delta} \times \frac{1}{\sqrt{2} + \epsilon/2}\sqrt{\Delta\ln\Delta} = \Delta$, a contradiction.
\end{proof}

The previous claim allows us to modify our graph by removing ``bad" colours before starting the procedure. In the worst case we are left with at least half of the colours we originally started with.

\medskip
\begin{modification}\label{mainmod}
For each vertex $v$ and colour $c$ in $v$'s list, remove $c$ from $v$'s list if $t_0(v,c) > \frac{1}{\sqrt{2} + \epsilon/2}\sqrt{\Delta\ln\Delta}$.
\end{modification}

\begin{remark}\label{remark1}
Eliminating half of $v$'s colours effectively means that we must start with twice the number of colours we really need. In other words, if such ``bad" colours posed no problem, and hence there was no need to remove them, we could get away with a leading constant of $\sqrt{2} + \epsilon$ in \textcolor{defaultcolor}{Theorem \ref{adaptable-theorem} and Theorem \ref{conflict-theorem}}. The reader may wonder if imposing a larger threshold in \textcolor{defaultcolor}{Modification \ref{mainmod}} (thus removing less colours) gives a smaller leading constant. Unfortunately, there is an upper limit to what threshold can be used as we must ensure $t_0(v,c)$ is not too large. Conversely, if the threshold is too relaxed we end up removing too many colours. It turns out that the threshold proposed is optimal in the sense that it strikes the right balance between these two extremes.
\end{remark}

Finally, we are ready to give recursive definitions for $L_i$ and $T_i$. Keep in mind that in the following definitions, $\beta$ is the constant specified by (\ref{betadef}). Initially, by \textcolor{defaultcolor}{Modification \ref{mainmod}} and \textcolor{defaultcolor}{Claim \ref{truncationlem}} we have the following: $$L_0 = (\sqrt{2} + \epsilon/2)\sqrt{\Delta/\ln\Delta}, \ \ \ \ \ T_0 = \frac{1}{\sqrt{2} + \epsilon/2}\sqrt{\Delta\ln\Delta},$$ and we recursively define:

\begin{gather}\label{eqn:eqs1}
    \begin{align}
        L_{i+1} &= L_i \times \mathrm{Keep}_i - L_i^{1-\beta/2} \nonumber
        \\
        \\
        T_{i+1} &= T_i\Big(1-\frac{K}{\ln\Delta}\mathrm{Keep}_i\Big) \times \mathrm{Keep}_i + T_i^{1-\beta/2} \nonumber
    \end{align}
\end{gather}

\bigskip
We also give recursive definitions for $L'_i, T'_i$, which are closely related to $L_i$ and $T_i$. Initially we have the following: $$L'_0 = L_0, \ \ \ \ \ T'_0 = T_0,$$
and we recursively define:

\begin{gather}\label{eqn:eqs2}
    \begin{align}
        L'_{i+1} &= L'_i \times \mathrm{Keep}_i \nonumber
        \\
        \\
        T'_{i+1} &= T'_i\Big(1-\frac{K}{\ln\Delta}\mathrm{Keep}_i\Big) \times \mathrm{Keep}_i \nonumber
    \end{align}
\end{gather}

\bigskip
$L'_i$ and $T'_i$ are much simpler to analyze, and we can show that they do not stray too far from values $L_i$ and $T_i$, respectively (see \textcolor{defaultcolor}{Lemma \ref{closeequationslem}}). This allows us to focus our attention on analyzing these simpler equations. Now we move on to proving several lemmas, which will allow us to complete the proof of the main theorem.

Throughout the proof, we use the fact that the ratio $T_i/L_i$ is decreasing. We prove this in the following lemma.


\begin{lem}\label{decreasinglem}
If for all $j < i, L_j, T_j \geq \Delta^{\frac{1}{2}\epsilon^2/(\epsilon + 4)^2}$ and $T_j \geq \frac{1}{8}L_j$ then $T_i/L_i < T_{i-1}/L_{i-1}$
\end{lem}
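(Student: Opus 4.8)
The plan is to compare the recurrences \eqref{eqn:eqs1} for $L_{i+1}$ and $T_{i+1}$ directly, showing that $T$ shrinks by a strictly larger relative factor than $L$ in each iteration. Write $r_i = T_i/L_i$. From the hypothesis $T_j, L_j \geq \Delta^{\frac{1}{2}\epsilon^2/(\epsilon+4)^2}$ for $j<i$, the correction terms $L_i^{1-\beta/2}$ and $T_i^{1-\beta/2}$ are of lower order — specifically $L_i^{1-\beta/2} = o(L_i)$ and $T_i^{1-\beta/2} = o(T_i)$ as $\Delta \to \infty$ — so up to $(1\pm o(1))$ factors we have
\[
L_{i} = L_{i-1}\,\mathrm{Keep}_{i-1}\,(1 - o(1)), \qquad
T_{i} = T_{i-1}\,\mathrm{Keep}_{i-1}\Bigl(1 - \tfrac{K}{\ln\Delta}\mathrm{Keep}_{i-1}\Bigr)(1 + o(1)).
\]
Dividing, $r_i / r_{i-1} = \bigl(1 - \tfrac{K}{\ln\Delta}\mathrm{Keep}_{i-1}\bigr)(1 \pm o(1))$. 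Since $\mathrm{Keep}_{i-1} \le 1$ (it is a product of factors in $[0,1]$, using $P(i-1)$-type bounds or just that it is a probability) and $\mathrm{Keep}_{i-1}$ is bounded away from $0$ (this is exactly where $T_{i-1} \ge \tfrac18 L_{i-1}$ and the lower bounds on $L_{i-1}, T_{i-1}$ enter, via Claim \ref{keepclaim} and the definition \eqref{eqn:keep}: $\mathrm{Keep}_{i-1} = (1 - \Theta(1/(L_{i-1}\ln\Delta)))^{T_{i-1}} \to$ a constant in $(0,1)$ because $T_{i-1}/(L_{i-1}\ln\Delta) = \Theta(1/\ln\Delta) \to 0$... actually one should be careful here), the factor $1 - \tfrac{K}{\ln\Delta}\mathrm{Keep}_{i-1}$ is at most $1 - \delta/\ln\Delta$ for some $\delta > 0$ independent of $i$. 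This beats the $(1+o(1))$ slack for $\Delta$ large, giving $r_i < r_{i-1}$.

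The step I expect to be the actual obstacle is making the ``lower order correction'' claim rigorous \emph{with the inequality going the right way}: the $L$-recurrence \emph{subtracts} $L_i^{1-\beta/2}$ while the $T$-recurrence \emph{adds} $T_i^{1-\beta/2}$, so the error terms push $r_i$ in the wrong direction (they make $L$ smaller and $T$ larger). One must show the genuine multiplicative gain of $T$ over $L$ — the factor $1 - \tfrac{K}{\ln\Delta}\mathrm{Keep}_{i-1}$, which is a $\Theta(1/\ln\Delta)$ relative decrease — dominates the combined error, which is $O(L_{i-1}^{-\beta/2}) + O(T_{i-1}^{-\beta/2})$, and these are each $\Delta^{-\Theta(1)}$ by the standing lower bounds, hence far smaller than $1/\ln\Delta$. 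So the cleanest route is: (i) bound $L_i \ge L_{i-1}\mathrm{Keep}_{i-1}(1 - L_{i-1}^{-\beta/2})$, hence $L_i \ge L_{i-1}\mathrm{Keep}_{i-1}(1 - o(1/\ln\Delta))$; (ii) bound $T_i \le T_{i-1}\mathrm{Keep}_{i-1}(1 - \tfrac{K}{\ln\Delta}\mathrm{Keep}_{i-1}) + T_{i-1}^{1-\beta/2} \le T_{i-1}\mathrm{Keep}_{i-1}(1 - \tfrac{K}{\ln\Delta}\mathrm{Keep}_{i-1})(1 + o(1/\ln\Delta))$, using that the $\mathrm{Keep}_{i-1}$ factors are bounded below by a constant so the relative size of $T_{i-1}^{1-\beta/2}$ compared to the main term is $\Delta^{-\Theta(1)}$; (iii) divide and conclude
\[
\frac{T_i}{L_i} \;\le\; \frac{T_{i-1}}{L_{i-1}}\cdot\Bigl(1 - \tfrac{K}{\ln\Delta}\mathrm{Keep}_{i-1}\Bigr)\cdot\bigl(1 + o(1/\ln\Delta)\bigr) \;<\; \frac{T_{i-1}}{L_{i-1}}
\]
for $\Delta$ large, since $\mathrm{Keep}_{i-1}$ is bounded away from $0$.

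The only genuinely delicate point is item (ii)'s use of a \emph{uniform} positive lower bound on $\mathrm{Keep}_{i-1}$ (uniform over $i$, for $\Delta$ fixed and large). This follows because $\mathrm{Keep}_{i-1} = \bigl(1 - \tfrac{K(1+\epsilon e^{-\epsilon}/30)}{L_{i-1}\ln\Delta}\bigr)^{T_{i-1}} \ge \exp\bigl(-\tfrac{2K(1+\epsilon e^{-\epsilon}/30)\,T_{i-1}}{L_{i-1}\ln\Delta}\bigr)$, and $T_{i-1}/L_{i-1} = r_{i-1} \le r_0 = T_0/L_0 = \ln\Delta/(\sqrt{2}+\epsilon/2)^2$ by the (inductively available) monotonicity for smaller indices, so the exponent is bounded in absolute value by a constant, making $\mathrm{Keep}_{i-1} \ge c_0 > 0$ for all relevant $i$. (Alternatively one may run the whole argument as a simultaneous induction: $r_{i-1} \le r_0$ and $r_i < r_{i-1}$.) With that in hand all the $o(\cdot)$ estimates are uniform and the proof closes.
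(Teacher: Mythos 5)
Your proposal is correct and follows essentially the same approach as the paper's proof: both run a simultaneous induction giving $T_i/L_i < T_0/L_0 = \Theta(\ln\Delta)$, use this to deduce that $\mathrm{Keep}_i$ is bounded below by a positive constant, and then show that the genuine multiplicative decrease $1 - \tfrac{K}{\ln\Delta}\mathrm{Keep}_i = 1 - \Theta(1/\ln\Delta)$ in the $T$-recurrence dominates the polynomially small correction terms $L_i^{-\beta/2}, T_i^{-\beta/2} = \Delta^{-\Theta(1)}$, both of which you correctly identify as the crux. The only presentational difference is that the paper packages the comparison by showing $T_{i+1} \le T_i\bigl(\mathrm{Keep}_i - L_i^{-\beta/2}\bigr)$ outright, so the same factor appears on both sides and the ratio inequality is immediate, while you track $(1 \pm o(1/\ln\Delta))$ multiplicative slacks; these are two ways of writing the same estimate.
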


\begin{proof}
The proof is by induction. We assume inductively that: $$T_i/L_i < T_{i-1}/L_{i-1} < \cdots < T_0/L_0 < \ln\Delta$$ and we shall abbreviate ``inductive hypothesis" by $IH$. Note that by $IH$, we have the following useful fact:
\begin{align}
    \mathrm{Keep}_i = \Big(1-\frac{K(1+\epsilon e^{-\epsilon}/30)}{L_i\ln\Delta}\Big)^{T_i} \geq e^{-\frac{K(1+\epsilon e^{-\epsilon}/20)T_i}{L_i\ln\Delta}} \geq e^{-\frac{K(1+\epsilon e^{-\epsilon}/20)T_0}{L_0\ln\Delta}} = \Omega(1). \label{keeplowerbound}
\end{align}

Now we consider $i+1$. By definition (\ref{eqn:eqs1}) we have: $$L_{i+1} = L_i \times \Big(\mathrm{Keep}_i -L_i^{-\beta/2}\Big).$$

\noindent
Also, we have:
\begin{align}
        T_{i+1} &= T_i \times \mathrm{Keep}_i - T_i \times \frac{K}{\ln\Delta}\mathrm{Keep}^2_i + T_i^{1-\beta/2} \nonumber \tag*{by definition (\ref{eqn:eqs1})} \\
        &\leq T_i \times \mathrm{Keep}_i - \mathrm{Keep}_i^2 \times T_i^{1-\beta/8} + T_i^{1-\beta/2} \nonumber \tag*{since $T_i \geq \Delta^{\frac{1}{2}\epsilon^2/(\epsilon + 4)^2}$} \\
        &\leq T_i \times \mathrm{Keep}_i - T_i^{1-\beta/4}
        \nonumber \tag*{since $\mathrm{Keep}_i = \Omega(1)$ by (\ref{keeplowerbound})}\\
        &= T_i \times \mathrm{Keep}_i - T_i^{1-\beta/2} \times T_i^{\beta/4} \nonumber \\
        &\leq T_i \times \mathrm{Keep}_i - T_i^{1-\beta/2} \times (T_i/L_i)^{\beta/2} \nonumber \tag*{since $T_i/L_i < \ln\Delta$ by $IH$} \\
        &= T_i \times \Big(\mathrm{Keep}_i - L_i^{-\beta/2}\Big) \nonumber
\end{align}

\noindent
so $T_{i+1}/L_{i+1} < T_i/L_i$.
\end{proof}

Notice that since $T_0/L_0 < \ln\Delta$ initially, the previous lemma implies $T_i/L_i < \ln\Delta$ for every relevant $i$. It will also be useful to have upper and lower bounds on $\mathrm{Keep}_i$, which we will see in the next lemma.


\begin{lem}\label{keepboundlem}
If for all $j \leq i, L_j,T_j \geq \Delta^{\frac{1}{2}\epsilon^2/(\epsilon + 4)^2}$ and $T_j \geq \frac{1}{8}L_j$ then $e^{-\frac{K(1+\epsilon e^{-\epsilon}/20)}{(\sqrt{2} + \epsilon/2)^2}} \leq \mathrm{Keep}_i \leq 1 - \textcolor{defaultcolor}{\frac{K}{10\ln\Delta}}$
\end{lem}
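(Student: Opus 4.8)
The plan is to bound $\mathrm{Keep}_i = \bigl(1 - \tfrac{K(1+\epsilon e^{-\epsilon}/30)}{L_i \ln\Delta}\bigr)^{T_i}$ from both sides using the standard inequalities $1-x \le e^{-x}$ and $e^{-x} \ge 1-x$ (which appear as (\ref{eqn:lowertaylorbound})), plus the refinement $1-x \ge e^{-x - x^2}$ valid for small positive $x$. The base $x := \tfrac{K(1+\epsilon e^{-\epsilon}/30)}{L_i \ln\Delta}$ is $o(1)$ as $\Delta \to \infty$ because the hypothesis $L_i \ge \Delta^{\frac{1}{2}\epsilon^2/(\epsilon+4)^2}$ forces $L_i \to \infty$; this smallness is what lets us pass freely between the product form and exponential form.

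For the \textbf{lower bound}, I would write
\[
\mathrm{Keep}_i \;\ge\; \exp\!\bigl(-x T_i - x^2 T_i\bigr) \;\ge\; \exp\!\Bigl(-\tfrac{K(1+\epsilon e^{-\epsilon}/20)}{L_i \ln\Delta}\, T_i\Bigr),
\]
where the second step absorbs the lower-order term $x^2 T_i$ into the slightly larger constant $\epsilon e^{-\epsilon}/20$ in place of $\epsilon e^{-\epsilon}/30$ (legitimate for $\Delta$ large since $x \to 0$); this is exactly the intermediate bound already used in (\ref{keeplowerbound}). Then I invoke Lemma \ref{decreasinglem} — applicable because the hypotheses $L_j, T_j \ge \Delta^{\frac{1}{2}\epsilon^2/(\epsilon+4)^2}$ and $T_j \ge \tfrac18 L_j$ hold for all $j \le i$ — to get $T_i/L_i \le T_0/L_0 = \ln\Delta / (\sqrt{2}+\epsilon/2)^2$. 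Substituting,
\[
\mathrm{Keep}_i \;\ge\; \exp\!\Bigl(-\tfrac{K(1+\epsilon e^{-\epsilon}/20)}{\ln\Delta} \cdot \tfrac{\ln\Delta}{(\sqrt{2}+\epsilon/2)^2}\Bigr) \;=\; e^{-\frac{K(1+\epsilon e^{-\epsilon}/20)}{(\sqrt{2}+\epsilon/2)^2}},
\]
which is the claimed lower bound.

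For the \textbf{upper bound}, use $1 - x \le e^{-x}$ to get $\mathrm{Keep}_i \le e^{-x T_i} = e^{-K(1+\epsilon e^{-\epsilon}/30) T_i / (L_i \ln\Delta)}$. Now I need a \emph{lower} bound on $T_i/L_i$; the cheapest one available is the hypothesis $T_i \ge \tfrac18 L_i$, so the exponent is at most $-\tfrac{K}{8\ln\Delta}$, giving $\mathrm{Keep}_i \le e^{-K/(8\ln\Delta)}$. Since $K/(8\ln\Delta) \to 0$, the inequality $e^{-y} \le 1 - y/2$ holds for $y$ small (here $y = K/(8\ln\Delta)$), yielding $\mathrm{Keep}_i \le 1 - \tfrac{K}{16\ln\Delta} \le 1 - \tfrac{K}{10\ln\Delta}$ for $\Delta$ sufficiently large. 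I expect the only delicate point to be bookkeeping which of Lemma \ref{decreasinglem} versus the raw $T_j \ge \tfrac18 L_j$ hypothesis is needed on each side — the decreasing-ratio lemma gives the upper bound on $T_i/L_i$ needed for the $\mathrm{Keep}_i$ lower bound, while the crude $T_i \ge \tfrac18 L_i$ gives the lower bound on $T_i/L_i$ needed for the $\mathrm{Keep}_i$ upper bound — together with checking that every "$\Delta$ large" step is justified by $L_i, T_i \to \infty$, which the stated list-size hypotheses guarantee.
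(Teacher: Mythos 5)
Your lower bound argument is correct and matches the paper's approach (same chain: pass to the exponential with a slack constant absorbing the $x^2$ term, then use Lemma \ref{decreasinglem} and the initial value $T_0/L_0 = \ln\Delta/(\sqrt{2}+\epsilon/2)^2$). But the upper bound has a genuine error at the final step: you derive $\mathrm{Keep}_i \le 1 - \tfrac{K}{16\ln\Delta}$ and then assert $1 - \tfrac{K}{16\ln\Delta} \le 1 - \tfrac{K}{10\ln\Delta}$, which is backwards — since $\tfrac{1}{16} < \tfrac{1}{10}$, we have $1 - \tfrac{K}{16\ln\Delta} > 1 - \tfrac{K}{10\ln\Delta}$, so your chain does not close.

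The root cause is that $e^{-y} \le 1 - y/2$ is too lossy; it throws away half the linear coefficient, and $\tfrac{1}{2}\cdot\tfrac{1}{8} = \tfrac{1}{16}$ falls short of $\tfrac{1}{10}$. You should instead use the sharper Taylor estimate $e^{-y} \le 1 - y + \tfrac{y^2}{2} = 1 - y(1 - y/2)$; since $y = \tfrac{K}{8\ln\Delta} \to 0$, the factor $1 - y/2$ is at least, say, $0.9$ for $\Delta$ large, so $e^{-y} \le 1 - 0.9\,y = 1 - \tfrac{0.9\,K}{8\ln\Delta}$, and $\tfrac{0.9}{8} = 0.1125 > \tfrac{1}{10}$ gives the claimed bound. (You also silently dropped the $(1 + \epsilon e^{-\epsilon}/30)$ factor from the exponent; that is harmless here since it only weakens the estimate by a factor $\ge 1$, but keeping it gives a little more breathing room, as the paper does.) With that one-line fix the proof is correct and follows the same route as the paper's.
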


\begin{proof}
For the lower bound, we make use of \textcolor{defaultcolor}{Lemma \ref{decreasinglem}} as well as the same argument of (\ref{keeplowerbound}) to get:
$$\mathrm{Keep}_i = \Big(1-\frac{K(1+\epsilon e^{-\epsilon}/30)}{L_i\ln\Delta}\Big)^{T_i} \geq e^{-\frac{K(1+\epsilon e^{-\epsilon}/20)T_i}{L_i\ln\Delta}} \geq e^{-\frac{K(1+\epsilon e^{-\epsilon}/20)T_0}{L_0\ln\Delta}} = e^{-\frac{K(1+\epsilon e^{-\epsilon}/20)}{(\sqrt{2} + \epsilon/2)^2}}.$$ \\
\indent For the upper bound, we have: $$\mathrm{Keep}_i = \Big(1-\frac{K(1+\epsilon e^{-\epsilon}/30)}{L_i\ln\Delta}\Big)^{T_i} \leq e^{-\frac{K(1+\epsilon e^{-\epsilon}/30)T_i}{L_i\ln\Delta}} \leq e^{-\frac{K(1+\epsilon e^{-\epsilon}/30)}{8\ln\Delta}} \leq 1 - \textcolor{defaultcolor}{\frac{K}{10\ln\Delta}}$$ using the assumption that $T_i \geq \frac{1}{8}L_i$
\end{proof}

\begin{remark}\label{Kremark}
We remark that by choice of $K$, the lower bound given by \textcolor{defaultcolor}{Lemma \ref{keepboundlem}} is at least $\frac{1}{2}$. We will see this in \textcolor{defaultcolor}{Lemma \ref{listsizelargelem}} when we precisely define $K$ (see definition (\ref{Kdef})).
\end{remark}

The next step is to compute the expected values of our parameters, which we will see in the following lemma.


\begin{lem}\label{expectationlem}
If $P(i)$ holds, $T_i \geq \frac{1}{8}L_i$, and if $T_i,L_i \geq \Delta^{\frac{1}{2}\epsilon^2/(\epsilon + 4)^2}$, then for every uncoloured vertex $v$ and colour $c \in L_i(v)$
\begin{enumerate}[label=\alph*)]
    \item $\mathbf{E}(\ell_{i+1}(v)) = L_i \times \mathrm{Keep}_i$;
    \item $\mathbf{E}(t'_{i+1}(v,c)) \leq t_i(v,c)(1- \frac{K}{\ln\Delta} \mathrm{Keep}_i) \times \mathrm{Keep}_i + T_i^{3/4}$.
\end{enumerate}
\end{lem}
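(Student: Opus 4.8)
### Proof Proposal

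The plan is to compute each expectation by conditioning on the outcome of step 1 (the truncation) and then analyzing steps 2--6. For part (a), fix an uncoloured vertex $v$ with list $L_i(v)$ of size exactly $L_i$ after step 1. A colour $c \in L_i(v)$ survives to the beginning of step 6 of iteration $i+1$ with probability exactly $\mathrm{Keep}_i(v,c)$ (this is how $\mathrm{Keep}_i(v,c)$ was defined in (\ref{eqn:keepvc})), and then the equalizing coin flip is designed so that $c$ survives the whole iteration with probability exactly $(1-\mathrm{Eq}_i(v,c))\mathrm{Keep}_i(v,c) = \mathrm{Keep}_i$. So $\mathbf{E}(\ell_{i+1}(v)) = \sum_{c \in L_i(v)} \mathrm{Keep}_i = L_i \times \mathrm{Keep}_i$ by linearity of expectation. (One subtlety: if $v$ itself gets activated and retains a colour, then $\ell_{i+1}(v)$ is not literally the count of surviving colours; but in that case $v$ is coloured and the quantity $\ell_{i+1}(v)$ is irrelevant to $P(i+1)$, or one interprets the statement over the event that $v$ stays uncoloured — the per-colour survival probability computation is unaffected. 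I would address this with a sentence noting we condition on $v$ remaining uncoloured, or simply that $\mathrm{Keep}_i(v,c)$ already excludes the ``$v$ retained a colour'' contribution through the convention in the definition of $t'$.)

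For part (b), fix $v$ and $c \in L_i(v)$. Recall $t'_{i+1}(v,c) = \sum_{u \in N(v)} t'_{i+1}(v,u,c)$, so by linearity it suffices to bound $\mathbf{E}(t'_{i+1}(v,u,c))$ for each uncoloured neighbour $u$ and sum. By definition, $t'_{i+1}(v,u,c)$ is $0$ unless $u$ stays uncoloured, and otherwise it counts the labels $(c,c')$ among the $t_i(v,u,c)$ currently present such that $c'$ is neither removed from $L(u)$ by a conflict with a neighbour $w \neq v$ nor by the equalizing coin flip. I would argue these per-label survival events are ``almost independent'' in the following controlled sense: for a fixed label $(c,c')$ between $v$ and $u$, the colour $c'$ survives in $L(u)$ (ignoring $v$'s influence, and ignoring whether $u$ is coloured) with probability at most $\mathrm{Keep}_i(u,c') / (\text{the factor from } v)$, which is at most $\mathrm{Keep}_i / (1 - \frac{K}{\ln\Delta}\cdot\frac{t_i(u,v,c')}{L_i})^{-1}$... — the cleaner route is: the probability that $c'$ is not removed from $L(u)$ due to a $w\neq v$ conflict and not removed by its equalizing flip equals $\mathrm{Keep}_i / (1 - \frac{K}{\ln\Delta}\cdot\frac{t_i(u,v,c')}{L_i})$, which since $t_i(u,v,c')\le D(\tau)$ is small is at most $\mathrm{Keep}_i(1 + o(1))$, in fact at most $\mathrm{Keep}_i \cdot \frac{1}{1 - o(1)}$. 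Meanwhile $u$ itself stays uncoloured, and in particular is not assigned one of the $t_i(v,u,c)$ colours that would conflict-back with $v$ — but that event is already folded into how $t_i$ behaves; the factor we genuinely need is roughly $(1 - \frac{K}{\ln\Delta}\mathrm{Keep}_i)$, coming from the probability that $v$ is not activated-and-assigned-$c$ in a way that... actually the $(1-\frac{K}{\ln\Delta}\mathrm{Keep}_i)$ factor should come from $u$ not being uncoloured-via-conflict in a way tied to $c$, combined with the surviving-count structure. I would carefully trace: $\mathbf{E}(t'_{i+1}(v,u,c)) \le t_i(v,u,c)\cdot\Pr[u \text{ uncoloured and } c' \text{ survives the relevant removals}] \le t_i(v,u,c)(1-\frac{K}{\ln\Delta}\mathrm{Keep}_i)\mathrm{Keep}_i(1+o(1))$, sum over $u$ to get $t_i(v,c)(1-\frac{K}{\ln\Delta}\mathrm{Keep}_i)\mathrm{Keep}_i(1+o(1))$, and absorb the $(1+o(1))$ error — which is $o(1)\cdot t_i(v,c) \le o(1)\cdot T_i$ — into the slack term $T_i^{3/4}$ using $T_i \ge \Delta^{\frac12 \epsilon^2/(\epsilon+4)^2}$ so that $o(1)\cdot T_i \ll T_i^{3/4}$ is false in general — so instead the error must be genuinely $O(T_i/\text{something polynomial})$. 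The honest accounting: the multiplicative error from the ``$/(1 - \frac{K}{\ln\Delta}\frac{t_i(u,v,c')}{L_i})$'' correction is $1 + O(\frac{t_i(u,v,c')}{L_i\ln\Delta}) = 1 + O(L_i^{-1/2-\beta}/\ln\Delta)$ by (\ref{betadef}), so the total additive error is $O(t_i(v,c) L_i^{-1/2-\beta}/\ln\Delta) = O(T_i \cdot L_i^{-1/2-\beta})$, and since $T_i \le L_i \ln\Delta$ (as $T_i/L_i < \ln\Delta$, Lemma \ref{decreasinglem}) and $L_i$ is polynomial in $\Delta$, this is $O(L_i^{1/2-\beta}\ln\Delta) \le T_i^{3/4}$ for $\Delta$ large — I would verify the exponent arithmetic here against the definition of $\beta$.

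The main obstacle I anticipate is the careful bookkeeping in part (b): correctly isolating the factor $(1 - \frac{K}{\ln\Delta}\mathrm{Keep}_i)$ (which morally says ``$u$ is not uncoloured by a step-5 conflict that was itself caused by one of the $t_i(v,u,c)$ dangerous colours being kept'' — i.e., $u$ keeps its assigned colour, given it was assigned one of those, with probability $\mathrm{Keep}_i$, and it was assigned one with probability $\frac{K}{\ln\Delta}\cdot\frac{t_i(v,u,c)}{L_i}$ per colour) versus the factor $\mathrm{Keep}_i$ (the probability that a given $c'$ survives removal-by-$w\neq v$ and the equalizing flip), and showing these combine multiplicatively up to a negligible correlation error. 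The independence structure is exactly the kind that the Mutual Independence Principle handles cleanly since distinct labels involve distinct edges/colours, so I would invoke that to justify treating the relevant trials as independent where needed, and push all the small multiplicative slop into the $T_i^{3/4}$ term, checking once at the end that $\beta \in (0,\tfrac12)$ makes the exponent inequality $L_i^{1/2-\beta}\ln\Delta \le T_i^{3/4}$ hold for $\Delta$ large given $L_i, T_i \ge \Delta^{\frac12\epsilon^2/(\epsilon+4)^2}$ and $T_i \ge \tfrac18 L_i$.
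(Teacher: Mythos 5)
Part (a) of your proposal matches the paper's argument and is essentially correct.

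For part (b), your sketch identifies the right target---you want to show that for each constraint $(c,c')\in N_i(v,c)$ with endpoints $v,u$, the probability that $(c,c')$ is counted by $t'_{i+1}(v,u,c)$ is roughly $(1-\tfrac{K}{\ln\Delta}\mathrm{Keep}_i)\cdot\mathrm{Keep}_i$---but you do not actually establish it, and your attempts to rationalize the factor $(1-\tfrac{K}{\ln\Delta}\mathrm{Keep}_i)$ are off base. That factor is (up to small error) the probability that $u$ stays uncoloured: $u$ is not activated with probability $1-\tfrac{K}{\ln\Delta}$, and if activated it becomes uncoloured via a step-5 conflict with probability roughly $1-\mathrm{Keep}_i$; summing gives $1-\tfrac{K}{\ln\Delta}\mathrm{Keep}_i$. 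It is not about ``$v$ not activated-and-assigned-$c$'' nor about ``dangerous colours being kept,'' and your own text flags that you are not sure where it comes from.

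The more serious gap is that you implicitly write $\Pr[\,u\text{ uncoloured and }c'\text{ survives the relevant removals}\,]$ as the product $\Pr[u\text{ uncoloured}]\cdot\Pr[c'\text{ survives}]$ (times $1+o(1)$). These two events are \emph{not} independent: both are determined by the activation choices and colour assignments of $u$'s neighbours $w\neq v$, so one of them being conditioned on changes the distribution of the other. Justifying a near-multiplicative bound here is the technical heart of part (b), and your proposal just asserts it. The paper handles it by conditioning on $u$'s activation status (three cases: $u$ not activated, $u$ activated and assigned some $\gamma\neq c'$, $u$ activated and assigned $c'$), and in the middle case further conditioning on the assigned colour $\gamma$ and on the survival event $A_1$ before bounding the probability that some $w$ uncolours $u$; a careful conditional-probability computation (including a correction factor $1/(1-\tfrac{K}{\ln\Delta}\tfrac{t_i(u,w,c')}{L_i})$ per neighbour, to account for conditioning on $w$ not having removed $c'$) then yields the desired bound with an additive error of order $L_i^{-1/2}$ per constraint. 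Without doing this conditioning your proof has a hole exactly where the difficulty lies. Once the per-constraint probability bound is established, your error bookkeeping (summing over $t_i(v,c)\le T_i$ constraints, using $T_i/L_i<\ln\Delta$ and $\beta\in(0,\tfrac12)$ to absorb into $T_i^{3/4}$) is in the right spirit and close to what the paper does.
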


\begin{proof}
\
\begin{enumerate}[wide=\parindent]
    \item [$a)$]
    By property $P(i)$, $\ell_i(v) \geq L_i$. So the truncation step in the procedure is justified, and the size of $v$'s list will be precisely $L_i$ just after the first step of iteration $i+1$. Because of our equalizing coin flip step in the procedure, the probability that a colour $c$ in $v$'s list remains in $v$'s list by the end of the iteration is precisely $\mathrm{Keep}_i$, and so the rest follows by linearity of expectation.
    \item [$b)$]
    Consider any constraint $(c,c') \in N_i(v,c)$ with endpoints $v,u$ where $u$ is not coloured. By linearity of expectation, it suffices to give a bound on the probability of $u$ not retaining a colour and $c'$ not being lost from $u$'s list because of either:
    \begin{itemize}[leftmargin=.6in]
        \item an equalizing coin flip, or,
        \item a conflict with a colour assigned to a neighbour $w \neq v$
    \end{itemize}
    
    \noindent
    during iteration $i+1$. Call this event $A$.
    
    We consider 3 cases:
    \begin{itemize}[leftmargin=.6in]
        \item when $u$ is not activated
        \item when $u$ is activated and not assigned $c'$
        \item when $u$ is activated and assigned $c'$
    \end{itemize}
    and we compute the probability of $A$ occurring, conditional on each case.
    
    \medskip
    \noindent
    \textit{Case 1: $u$ is not activated}
    
    \indent Call this case $B_1$. We are interested in $\mathbf{Pr}(A|B_1)$. If $u$ is not activated (and therefore not assigned a colour), this is precisely the probability that $c'$ remains in $u$'s list or is lost due to a conflict with $v$. By the union bound we get:
    \begin{align}
        \mathbf{Pr}(A|B_1) = \mathrm{Keep}_i + \frac{K}{\ln\Delta} \frac{t_i(u,v,c')}{L_i} \leq \mathrm{Keep}_i + L_i^{-1/2} \label{case1prob}
    \end{align} by (\ref{betadef}).

    \medskip
    \noindent
    \textit{Case 2: $u$ is activated and not assigned $c'$}
    
    \indent Call this case $B_2$.  We are interested in $\mathbf{Pr}(A|B_2)$. We introduce two new events: let $A_1$ be the event that $c'$ remains in $L_{i+1}(u)$ or is lost due to a conflict with $v$, and let $A_2$ be the event that for at least one neighbour $w$ of $u$, $w$ is activated and the colour assigned to $w$ makes $u$ \textcolor{defaultcolor}{uncoloured}. Note that $A = A_1 \cap A_2$, so our strategy for computing $\mathbf{Pr}(A|B_2)$ is to compute $\mathbf{Pr}(A_1|B_2) \times \mathbf{Pr}(A_2|A_1 \cap B_2)$ instead. The reader can easily verify that these two expressions are equal by using the definition of conditional probabilities. \\
    \indent First we consider $\mathbf{Pr}(A_1|B_2)$. Note that conditioning on $u$ being assigned a colour other than $c'$ has no effect on the colours assigned on vertices in $N(u)$. So by the same argument as (\ref{case1prob}) we have:
    \begin{align}
        \mathbf{Pr}(A_1|B_2) \leq\mathrm{Keep}_i + L_i^{-1/2}. \label{case2prob1}
    \end{align}
    
    Now we consider $\mathbf{Pr}(A_2|A_1 \cap B_2)$. For $\gamma \in L_i(u) - c'$, define $B_{2,\gamma}$ to be the event that $u$ is activated and assigned $\gamma$. I.e. $B_2 = \bigcup_{\gamma} B_{2,\gamma}$ is a union of disjoint events, each equally likely to occur with some probability, say $p$. Also note that each $B_{2,\gamma}$ is independent from $A_1$. From this we get the following: 
    \begin{align}
        \mathbf{Pr}(A_2|A_1 \cap B_2) = \frac{\sum_{\gamma}\mathbf{Pr}(A_2 \cap A_1 \cap B_{2,\gamma})}{(L_i-1)p \times \mathbf{Pr}(A_1)} \leq \frac{\max_{\gamma}\mathbf{Pr}(A_2 \cap A_1 \cap B_{2,\gamma})}{p \times \mathbf{Pr}(A_1)} = \max_{\gamma}\mathbf{Pr}(A_2|A_1 \cap B_{2,\gamma}). \label{strategicbound}
    \end{align}
    I.e. it suffices to achieve a bound on $\mathbf{Pr}(A_2|A_1 \cap B_{2,\gamma})$. Fix $\gamma \in L_i(u) - c'$. To achieve our bound, for a neighbour $w$ of $u$ we define $A_{2,w}$ to be the event that $w$ is activated and assigned a colour which makes $u$ lose $\gamma$. Then we have the following:
    \begin{align}
        \mathbf{Pr}(A_2|A_1 \cap B_{2,\gamma}) = \bigg(1 - \prod_{w \in N(u)}(1 - \mathbf{Pr}(A_{2,w}|A_1 \cap B_{2,\gamma})\bigg). \label{probformula}
    \end{align} \\
    \indent We consider two subcases. When $w = v$, we have:
    \begin{align}
        \mathbf{Pr}(A_{2,v}|A_1 \cap B_{2,\gamma}) = \frac{K}{\ln\Delta} \times \frac{t_i(u,v,\gamma)}{L_i}. \label{case2subprob1}
    \end{align}
    For any other neighbour $w \in N(u) - v$, we should be more careful since a colour assignment on $w$ could make $u$ lose $c'$ thus violating event $A_1$. Since the activations and colour assignments are independent over different vertices, and since $\gamma \neq c'$, we simply have:
    $$ \mathbf{Pr}(A_{2,w}|A_1 \cap B_{2,\gamma}) = \mathbf{Pr}((w \textit{ is activated and assigned a bad colour for } u, \gamma) | D_w),$$
    where $D_w$ is the event that $w$ is not assigned a colour which makes $u$ lose $c'$. So we get:
    \begin{align}
        &\mathbf{Pr}((w \textit{ is activated and assigned a bad colour for } u, \gamma) \cap D_w)/\mathbf{Pr}(D_w) \nonumber \\
        \leq \ & \mathbf{Pr}(w \textit{ is activated and assigned a bad colour for } u, \gamma)/\mathbf{Pr}(D_w) \nonumber \\
        = \ & \Bigg(\frac{K}{\ln\Delta} \times \frac{t_i(u,w,\gamma)}{L_i} \Bigg) \times \frac{1}{\Big(1 - \frac{K}{\ln\Delta} \times \frac{t_i(u,w,c')}{L_i} \Big)}. \label{rwbound}
    \end{align}
  
    \noindent
    Note that by (\ref{betadef}), the second factor tends to $1$ from above. This means that for $\Delta$ sufficiently large, it is at most $(1 + \epsilon e^{-\epsilon}/40)$. So continuing (\ref{rwbound}) we get:
        \begin{align}
            \mathbf{Pr}(A_{2,w}|A_1 \cap B_{2,\gamma}) &\leq \Bigg(\frac{K}{\ln\Delta} \times \frac{t_i(u,w,\gamma)}{L_i} \Bigg) \times \frac{1}{\Big(1 - \frac{K}{\ln\Delta} \times \frac{t_i(u,w,c')}{L_i} \Big)} \nonumber \\
            &\leq (1 + \epsilon e^{-\epsilon}/40)\Bigg(\frac{K}{\ln\Delta} \times \frac{t_i(u,w,\gamma)}{L_i} \Bigg). \label{case2subprob2}
        \end{align}
    \noindent
    So by (\ref{case2subprob1}), (\ref{case2subprob2}) and (\ref{probformula}) we get:
        \begin{align}
             \mathbf{Pr}(A_2|A_1 \cap B_{2,\gamma}) & \leq 1 - \bigg(1-\frac{Kt_i(u,v,\gamma)}{L_i\ln\Delta}\bigg) \times \prod_{w \in N(u) - v}\bigg(1 - (1 + \epsilon e^{-\epsilon}/40)\frac{Kt_i(u,w,\gamma)}{L_i\ln\Delta}\bigg) \nonumber \\
            & \leq 1 - \prod_{w \in N(u)}\bigg(1 - \frac{K(1 + \epsilon e^{-\epsilon}/40)t_i(u,w,\gamma)}{L_i\ln\Delta}\bigg) \nonumber.
        \end{align} 
    \noindent
    Now using the same argument as in \textcolor{defaultcolor}{Claim \ref{keepclaim}} we can see that this is at most $ 1 - \mathrm{Keep}_i$. So by (\ref{strategicbound}) we have:
    \begin{align}
        \mathbf{Pr}(A_2|A_1 \cap B_2) \leq 1 - \mathrm{Keep}_i. \label{case2prob2}
    \end{align}
    Finally, by (\ref{case2prob1}) and (\ref{case2prob2}), we have: 
    \begin{align}
        \mathbf{Pr}(A|B_2) = (\mathrm{Keep}_i + L_i^{-1/2})(1 - \mathrm{Keep}_i). \label{case2prob}
    \end{align}
    
    \medskip
    \noindent
    \textit{Case 3: $u$ is activated and assigned $c'$}
    
    \indent Call this case $B_3$. We are interested in $\mathbf{Pr}(A|B_3)$. The only scenario where $u$ remains uncoloured and $c'$ is either lost due to a conflict with $v$ or remains in $L_{i+1}(u)$, is if $c'$ is lost due to a conflict with $v$. This happens with probability 
    \begin{align}
        \mathbf{Pr}(A|B_3) = \frac{K}{\ln\Delta}\frac{t_i(u,v,c')}{L_i} < L_i^{-1/2} \label{case3prob}
    \end{align}
    by (\ref{betadef}). \\
    \indent Therefore, by the law of total probability and (\ref{case1prob}), (\ref{case2prob}), (\ref{case3prob}), we get that the probability of the event $A$ occurring is:
        \begin{align}
           &\ \mathbf{Pr}\big(B_1\big)\times\mathbf{Pr}\big(A|B_1\big) + \mathbf{Pr}\big(B_2\big)\times\mathbf{Pr}\big(A|B_2\big) + \mathbf{Pr}\big(B_3\big)\times\mathbf{Pr}\big(A|B_3\big) \nonumber \\
            \leq &\ \Bigg(1 - \frac{K}{\ln\Delta}\Bigg) \times \Bigg(\mathrm{Keep}_i + L_i^{-1/2}\Bigg) + \frac{K}{\ln\Delta} \times \frac{L_i-1}{L_i} \times \Bigg(\mathrm{Keep}_i + L_i^{-1/2}\Bigg) \Bigg(1 - \mathrm{Keep}_i\Bigg) + \frac{K}{\ln\Delta}\times\frac{L_i^{-1/2}}{L_i} \nonumber \\
            \leq &\ \Bigg(1 - \frac{K}{\ln\Delta}\Bigg) \times \mathrm{Keep}_i + \frac{K}{\ln\Delta} \times \mathrm{Keep}_i \times \Bigg(1 - \mathrm{Keep}_i\Bigg) + 3L_i^{-1/2} \nonumber \\
            = &\ \mathrm{Keep}_i - \frac{K}{\ln\Delta}\mathrm{Keep}_i + \frac{K}{\ln\Delta}\mathrm{Keep}_i - \frac{K}{\ln\Delta}\mathrm{Keep}^2_i + 3L_i^{-1/2} \nonumber \\
            \leq &\ \bigg(1 - \frac{K}{\ln\Delta} \times \mathrm{Keep}_i \Bigg) \times \mathrm{Keep}_i + L_i^{-1/3}. \tag*{since by assumption $L_i \geq \Delta^{\frac{1}{2}\epsilon^2/(\epsilon + 4)^2}$} \nonumber
        \end{align}
        
        \noindent for $\Delta$ sufficiently large.
        
        Now by linearity of expectation we have \begin{align}
            \mathbf{E}(t'_{i+1}(v,c)) &\leq t_i(v,c)\bigg(1-\frac{K}{\ln\Delta}\mathrm{Keep}_i\bigg) \times \mathrm{Keep}_i + t_i(v,c)L_i^{-1/3} \nonumber \\
            &\leq t_i(v,c)\bigg(1-\frac{K}{\ln\Delta}\mathrm{Keep}_i\bigg) \times \mathrm{Keep}_i + T_iL_i^{-1/3} \nonumber \tag*{by property $P(i)$} \\
            &= t_i(v,c)\bigg(1-\frac{K}{\ln\Delta}\mathrm{Keep}_i\bigg) \times \mathrm{Keep}_i + T_i^{2/3} \bigg(\frac{T_i}{L_i}\bigg)^{1/3} \nonumber \\
            &\leq t_i(v,c)\bigg(1-\frac{K}{\ln\Delta}\mathrm{Keep}_i\bigg) \times \mathrm{Keep}_i + T_i^{2/3}\big(\ln\Delta\big)^{1/3} \tag*{since $T_i/L_i < \ln\Delta$ by \textcolor{defaultcolor}{Lemma \ref{decreasinglem}}} \nonumber \\
            &\leq t_i(v,c)\bigg(1-\frac{K}{\ln\Delta}\mathrm{Keep}_i\bigg) \times \mathrm{Keep}_i + T_i^{3/4} \tag*{since by assumption $T_i \geq \Delta^{\frac{1}{2}\epsilon^2/(\epsilon + 4)^2}$} \nonumber
        \end{align}
        
        \noindent for $\Delta$ sufficiently large.
\end{enumerate}

\end{proof}

The next step is to show concentration results for our parameters.

\begin{lem}\label{concentrationlem}
If $P(i)$ holds, $T_i \geq \frac{1}{8}L_i$,and if $T_i, L_i \geq \Delta^{\frac{1}{2}\epsilon^2/(\epsilon + 4)^2}$, then for any uncoloured vertex $v$ and colour $c \in L_i(v)$,
\begin{enumerate}[label=\alph*)]
    \item $\mathbf{Pr}\Big(|\ell_{i+1}(v)-\mathbf{E}(\ell_{i+1}(v))| > L_i^{1-\beta/2}\Big) < \Delta^{-\ln\Delta}$;
    \item $\mathbf{Pr}\Big(|t'_{i+1}(v,c)-\mathbf{E}(t'_{i+1}(v,c))| > \frac{1}{2}T_i^{1-\beta/2}\Big) < \Delta^{-\ln\Delta}$.
\end{enumerate}
\end{lem}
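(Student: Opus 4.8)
The plan is to apply Talagrand's Inequality to each of $\ell_{i+1}(v)$ and $t'_{i+1}(v,c)$, where the independent trials are the random choices of iteration $i+1$: for each vertex $w$, the trial recording whether $w$ is activated and, if so, which colour of $L(w)$ it receives; and for each vertex $w$ and colour $\gamma\in L(w)$, the equalizing coin flip of step 6. Both parameters are non-negative functions of these trials. Since the target failure probability $\Delta^{-\ln\Delta}=e^{-\ln^2\Delta}$ is so small, it suffices that each Talagrand (or Chernoff) application produces an exponent that is a fixed positive power of $L_i$ (respectively $T_i$), because $L_i,T_i$ are polynomial in $\Delta$; the only numerical inputs are (\ref{betadef}) and (\ref{observebound}), namely $D(\tau),t_i(v,u,c)\le L_i^{1/2-\beta}$ and $\le T_i^{1/2-\beta}$ with $\beta\in(0,\tfrac12)$, together with $T_i/L_i<\ln\Delta$ from Lemma \ref{decreasinglem} and the expectation bounds of Lemma \ref{expectationlem}.

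For part (a): one should \emph{not} try to certify directly how many colours survive at $v$, since a single surviving colour can be threatened by up to $t_i(v,c)\le T_i$ neighbours, far more than Talagrand's certificate budget. Instead, after conditioning on $v$'s own activation trial (the case of interest being $v$ uncoloured), write $\ell_{i+1}(v)=L_i-R^{(4)}-R^{(6)}$, where $R^{(4)}$ (resp. $R^{(6)}$) counts the colours deleted from $v$'s truncated list in step 4 (resp. step 6). Apply Talagrand to $R^{(4)}$: changing one trial changes $R^{(4)}$ by at most $2D(\tau)$ — for a fixed colour on one endpoint of an edge there are at most $D(\tau)$ constraints with that colour on that side, by definition of the conflict degree — so the Lipschitz constant is $q=O(L_i^{1/2-\beta})$; and if $R^{(4)}\ge s$ then $s$ of the deleted colours, each witnessed by the activation-and-colour trial of one neighbour that deleted it, certify it, so the certificate parameter is $r=O(1)$. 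The error term $20q\sqrt{r\,\mathbf{E}(R^{(4)})}+64q^2r=O(L_i^{1-\beta})$ is $o(L_i^{1-\beta/2})$, and with $t=\tfrac12 L_i^{1-\beta/2}$ the Talagrand exponent is $\Omega(L_i^{\beta})\gg\ln^2\Delta$. For $R^{(6)}$, condition on the outcomes of steps 1--5: the colours then remaining at $v$ form a fixed set $S$ of size at most $L_i$, each deleted independently with probability $\mathrm{Eq}_i(v,c)$, so the Chernoff Bound gives deviation $o(L_i^{1-\beta/2})$ (using only $1-\beta/2>\tfrac12$) with the required probability, and the conditional mean $\sum_{c\in S}\mathrm{Keep}_i/\mathrm{Keep}_i(v,c)$ is pinned to $\mathbf{E}(\ell_{i+1}(v))=L_i\mathrm{Keep}_i$ by rerunning the step-4 concentration with the deterministic weights $1/\mathrm{Keep}_i(v,c)\in[1,2]$ attached (bounded by Claim \ref{keepclaim}). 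A union bound over these few events, each arranged to have probability below $\tfrac12\Delta^{-\ln\Delta}$, finishes (a).

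For part (b) I would run the same scheme one layer out. Here $t'_{i+1}(v,c)$ is a sum, over neighbours $u$ of $v$ and constraints $(c,c')\in N_i(v,u,c)$, of indicators depending on $u$, on the neighbours of $u$, and on equalizing flips at $u$; crucially, the primed parameter was defined so that $v$'s own activation and colour do not affect it, removing what would otherwise be an uncontrollable Lipschitz contribution from $v$. Because $G$ has no cycle of length $3$ or $4$, neighbours of $v$ are pairwise non-adjacent and every vertex at distance $2$ from $v$ has a unique neighbour in $N(v)$, so changing the trial of any vertex other than $v$ touches only the $O(1)$ groups of terms belonging to a single $u$, and each such group has total size $t_i(v,u,c)\le T_i^{1/2-\beta}$; hence $q=O(T_i^{1/2-\beta})$. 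Again one concentrates the number of \emph{destroyed} constraints, peeling off the step-6 equalizing flips (each disturbs at most $D(\tau)\le T_i^{1/2-\beta}$ constraints, so contributes only an $o(T_i^{1-\beta/2})$ deviation by a routine estimate for the nearly-independent flips) and certifying each remaining destruction — a constraint killed because $c'$ was removed from $L(u)$ by some $w\ne v$, or because $u$ ended up coloured — by $O(1)$ trials naming a responsible vertex; this yields $r=O(1)$, deviation $O(T_i^{1-\beta})=o(T_i^{1-\beta/2})$, and exponent $\Omega(T_i^{\beta})$. Lemma \ref{expectationlem}(b) supplies the centering value $\mathbf{E}(t'_{i+1}(v,c))\le t_i(v,c)(1-\tfrac{K}{\ln\Delta}\mathrm{Keep}_i)\mathrm{Keep}_i+T_i^{3/4}$, and the extra $T_i^{3/4}$ sits comfortably inside the $\tfrac12 T_i^{1-\beta/2}$ slack.

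The step I expect to be the real obstacle is verifying Talagrand's certificate condition throughout: the honest difficulty is that ``a colour or constraint survived'' is naturally witnessed only by ruling out \emph{every} potential interferer, which can be $\Theta(T_i)$ of them, far more than the $O(\mathbf{E}(X))$ that Talagrand permits. Overcoming this forces the two devices above — counting the objects that were \emph{destroyed} (each destruction being certified by $O(1)$ trials naming a responsible vertex) and peeling the step-6 equalizing flips off into a separate, conditionally-simple estimate — and, for part (b), carefully using both the no-$3$-or-$4$-cycle hypothesis (to keep second neighbourhoods locally tree-like, hence the Lipschitz constant small) and the precise design of $t'_{i+1}$ (to neutralise $v$ itself and to stop the ``retained a colour'' failure mode from inflating the centering value relative to Lemma \ref{expectationlem}).
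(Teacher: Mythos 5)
Your plan for part (a) is broadly sound and close in spirit to the paper's, though the paper takes a slightly cleaner route: rather than splitting into $R^{(4)}$ and $R^{(6)}$ and handling the equalizing flips by a conditional Chernoff plus a second concentration of the conditional mean, it introduces a set of dummy trials (a colour for every unactivated vertex, a coin flip for every colour not in the list) so that the activations, colour choices, and step-6 flips become a single family of independent trials, and then applies Talagrand once to the total number $\bar\ell$ of colours removed, with $q=L_i^{1/2-\beta}$ and $r=2$ (two trials per conflict-deleted colour, one trial per flip-deleted colour). Either route works; the paper's is shorter.

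Part (b), however, contains a genuine gap. You assert that $t'_{i+1}(v,c)$ ``was defined so that $v$'s own activation and colour do not affect it,'' and you use this to exclude $v$'s trial from the Lipschitz analysis. That claim is false. The definition sets $t'_{i+1}(v,u,c)=0$ whenever $u$ \emph{retained} a colour, and whether $u$ retained its colour depends on $v$'s trial: if $v$ is assigned $c_0$ and $u$ is assigned $\gamma$ with $(c_0,\gamma)\in\Tau(v,u)$, step~5 uncolours $u$, whereas flipping $v$'s activation re-colours $u$ and zeroes out $t'_{i+1}(v,u,c)$. This can happen simultaneously for a large fraction of the neighbours $u$ of $v$, so a single change to $v$'s trial can change $t'_{i+1}(v,c)$ by as much as $\sum_u t_i(v,u,c)=t_i(v,c)\le T_i$, which is hopeless as a Talagrand Lipschitz constant. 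A second, related problem is your certificate plan: one of your destruction modes is ``$u$ ended up coloured,'' but certifying that $u$ stayed coloured requires ruling out a conflict with \emph{every} neighbour of $u$, which is $\Theta(\Delta)$ trials, not $O(1)$. The paper resolves both issues by working with $X$ (the count of constraints for which $u$ did \emph{not} retain a colour — easy to certify by exhibiting a non-activation or a conflicting witness) and decomposing $X_1\le X\le X_1+X_2$, where $X_1$ replaces ``conflict with some neighbour'' by ``conflict with some neighbour $w\ne v$'' and is therefore genuinely insensitive to $v$'s trial (Talagrand applies with $q=T_i^{1/2-\beta}$, $r=O(1)$), while $X_2$ (constraints where $u$ lost its colour specifically to $v$) is bounded above in distribution by $t_i(v,u^{\ast},\gamma)\cdot BIN\bigl(T_i,\tfrac{K}{\ln\Delta}\cdot\tfrac{t_i(u^{\ast\ast},v,\gamma')}{L_i}\bigr)$ and handled by the Chernoff Bound; the same split (into $Y_1,Y_2$) is applied to $Y$, and $t'_{i+1}(v,c)=X-Y$. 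Your proposal omits this split and omits the Chernoff step entirely, which is exactly the part that neutralises $v$'s influence.
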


\begin{proof} To show that our parameters are strongly concentrated, we will make use of Talagrand's Inequality. Recall that Talagrand's Inequality requires that a random variable be determined by a set of independent trials. 

The random choices made in an iteration are as follows. For each vertex we choose whether or not to activate it. We assign a random colour to each activated vertex. Finally, we conduct an equalizing coin flip for each $v$ and $c$ in $v$'s list, removing $c$ from $v$'s list if it loses the coin flip. A problem arises when applying Talagrand's Inequality, since these choices are not made independently of one another. For instance, the choice of whether to assign a random colour to a vertex depends on the activation choice for that vertex. Also, the choice of whether to conduct an equalizing coin flip for a given colour and vertex $v$ depends on the fact that this colour was not removed from $v$'s list earlier in the iteration, and this is determined by the activation and colour assignments on $v$'s neighbours. We add a set of dummy choices to fix this. We choose a colour for each unactivated vertex and conduct an equalizing coin flip for each $v$ and $c$ not in $v$'s list. Of course we do not assign the chosen colour to the unactivated vertex and there is no need to remove $c$ from $v$'s list if it loses the coin flip, so these dummy choices have no effect on the procedure. Their purpose is to make the set of random choices independent from one another, and this will allow us to apply Talagrand's Inequality.
\
\begin{enumerate}[wide=\parindent]
    \item[$a)$]
    We focus on showing that the number of colours removed from $v's$ list during iteration $i+1$, $\bar \ell$, is highly concentrated. \\
    \indent Changing the activation or colour assignment to any vertex $u \in N(v)$ can affect $\bar \ell$ by at most the maximal value of $t_i(u,v,\gamma)$ realized by colour $\gamma$. By (\ref{betadef}) this is at most $L_i^{1/2-\beta}$. Changing the colour assignment to any other vertex cannot affect $\bar \ell$ at all. Changing the result of any equalizing flip can affect $\bar \ell$  by at most 1. \\
    \indent Suppose $\bar \ell \geq s$. We consider the two possible ways a colour $c$ could leave $v$'s list. A neighbour $u$ of $v$ could have caused a conflict, meaning that $u$ was assigned a colour $c'$ such that $(c,c') \in \Tau(v,u)$. The other way a colour could be removed is via an equalizing flip. Therefore, there must be $s_1$ neighbours of $v$ which were involved in conflicts and caused $v$ to lose $s_1$ distinct colours. There must also be $s_2$ equalizing flips which resulted in $s_2$ distinct colours (disjoint from the $s_1$ previous colours) being removed from $v's$ list, and we must have $s_1 + s_2=s$. For the first set of events we require two trials for each colour removed: the activation and colour assigned to the neighbour. For the second set of events we require only one trial, the equalizing flip. Therefore in total we require at most $2s_1 + s_2 \leq 2s$ trials to certify that $\bar \ell \geq s$. \\
    \indent Let $t = L_i^{1-\beta/2}$, and note that $\mathbf{E}(\bar \ell)$ = $L_i - \mathbf{E}(\ell_{i+1}(v)) = L_i - L_i \times \mathrm{Keep}_i$. So this expectation is at most $L_i$. Therefore since $L_i$ grows with $\Delta$, we have $t/2 \geq 20q\sqrt{\smash[b]{r\mathbf{E}(\bar \ell)}} + 64q^2r$ for any $q \leq L_i^{1/2-\beta}$ and any constant $r$, and for $\Delta$ sufficiently large. Using this fact, we have that 
    $$\mathbf{Pr}\Big(|\bar \ell - \mathbf{E}(\bar \ell)| > t\Big) \leq \mathbf{Pr}\Big(|\bar \ell - \mathbf{E}(\bar \ell)| > \frac{t}{2} + 20q\sqrt{r\mathbf{E}(\bar \ell)} + 64q^2r\Big).$$ So applying Talagrand's Inequality with $q=L_i^{1/2-\beta}$ and $r=2$, we get that
    \begin{align}
        \mathbf{Pr}\Big(|\bar \ell - \mathbf{E}(\bar \ell)| > L_i^{1-\beta/2}\Big) &\leq 4\exp\Bigg({-\frac{L_i^{2-\beta}}{64q^2\Big(\mathbf{E}(\bar \ell) + L_i^{1-\beta/2}\Big)}}\Bigg) \nonumber \\
        &\leq 4\exp\Bigg({-\frac{L_i^{1-\beta}}{128L_i^{1-2\beta}}}\Bigg) \nonumber \\
        &\leq 4\exp\Bigg({-\frac{L_i^{\beta}}{128}}\Bigg) \nonumber \\
        &\leq 4\exp\bigg({-\frac{\ln^3\Delta}{128}}\bigg) \nonumber \tag*{since $L_i \geq \Delta^{\frac{1}{2}\epsilon^2/(\epsilon+4)^2}$ by assumption} \\
        &< \Delta^{-\ln\Delta} \nonumber
    \end{align}

    \noindent for $\Delta$ sufficiently large. \\
    \indent Now by linearity of expectation, $\mathbf{E}(\ell_{i+1}(v)) = L_i - \mathbf{E}(\bar \ell)$. So we have that 
    $$\mathbf{Pr}\Big(|\ell_{i+1}(v) - \mathbf{E}(\ell_{i+1}(v))| > L_i^{1-\beta/2}\Big) = 
    \mathbf{Pr}\Big(|\bar \ell - \mathbf{E}(\bar \ell)| > L_i^{1-\beta/2}\Big) < \Delta^{-\ln\Delta}$$
    
    \noindent as desired.
    \item[$b)$]
    Let $X$ be the random variable which counts the number of constraints $(c,c') \in N_i(v,c)$ with endpoints $v,u$ such that $u$ did not retain a colour during iteration $i+1$. Let $Y$ be the number of constraints $(c, c') \in N_i(v,c)$ with endpoints $v,u$ such that during iteration $i+1$ the following occurred: 
    
    \begin{itemize}
        \item $u$ did not retain a colour, and,
        \item $c'$ was removed from $u$'s list by either an equalizing flip or because a neighbour of $u$ other than $v$ caused a conflict
    \end{itemize}

    Note that $t'_{i+1}(v,c) = X - Y$, and so by linearity of expectation it suffices to show that $X$ and $Y$ are both sufficiently concentrated.
    
    First we focus on $X$. We prove concentration by splitting up $X$ into two related variables, $X_1$ and $X_2$. Let $X_1$ denote the number of constraints $(c, c') \in N_i(v,c)$ with endpoints $v,u$ such that during iteration $i+1$ \textit{one} of the following occurred:
    
    \begin{itemize}
        \item $u$ was not activated, or,
        \item $u$ did not retain a colour because of a conflict with a neighbour other than $v$
    \end{itemize}
    
    Let $X_2$ denote the number of constraints $(c,c') \in N_i(v,c)$ with endpoints $v,u$ such that $u$ was activated and lost its colour because of a conflict with $v$. Let us be clear about the definitions of $X_1$ and $X_2$. Note that for a constraint $(c, c')$ between $v$ and $u$ such that $u$ lost its colour because of a conflict with $v$, this constraint could still be counted by $X_1$ so long as there is some neighbour $w \neq v$ which also conflicted with $u$. Similarly, if $u$ lost its colour because of a conflict with a neighbour $w \neq v$, it could still be counted by $X_2$ so long as $v$ also conflicted with $u$. Now note that $X_1 \leq X \leq X_1 + X_2$.
    
    First we consider $X_1$. Consider the effect on $X_1$ by changing a random choice for a vertex. By triangle-freeness, there are no edges within $N(v)$, and so for a vertex $u \in N(v)$, changing a random choice for $u$ only affects whether or not $u$ remains uncoloured. Thus this affects $X_1$ by at most $t_i(v,u,c)$. By (\ref{observebound}), this is at most $T_i^{1/2-\beta}$. Because our graph has no 4-cycles, if $w$ is a vertex outside the neighbourhood of $v$ and $w \neq v$, then $w$ can have at most 1 neighbour in $N(v)$, say $u$. Therefore changing a random choice for $w$ can at worst affect whether or not $u$ remains uncoloured, and therefore can affect $X_1$ again by at most $t_i(v,u,c) \leq T_i^{1/2-\beta}$. Of course, changing a random choice for $v$ has no effect on $X_1$.
    
    If vertices $v$ and $u$ are constrained by $(c,c') \in N_i(v,c)$ and $u$ does not receive a colour, the activation choice for $u$ certifies this fact. Otherwise if $u$ becomes uncoloured because of a conflict with a neighbour $u'$, then the activation and colour choices for both $u$ and $u'$ certify this fact. Thus we require a set of at most $4s$ random choices to certify that $X_1 \geq s$.
    
    As $X_1 \leq T_i$, we have $\mathbf{E}(X_1) \leq T_i$. So similar to the computation in part ($a$) which made use of Talagrand's Inequality, this implies that $$\mathbf{Pr}\big(|X_1 - \mathbf{E}(X_1)| > \frac{1}{4}T_i^{1-\beta/2}\big) < \frac{1}{4}\Delta^{-\ln\Delta}.$$
    
    Now we turn our attention to $X_2$. It's not hard to verify that $X_2$ is bounded above in distribution by the following random variable:  $$t_i(v,u^{\ast},\gamma)BIN\Big(T_i, \frac{K}{\ln\Delta} \times \frac{t_i(u^{\ast \ast},v,\gamma')}{L_i}\Big),$$ where $u^{\ast}, \gamma$ and $u^{\ast \ast}, \gamma'$ are chosen to maximize the values $t_i(v,u^{\ast},\gamma)$ and $t_i(u^{\ast \ast}, v, \gamma')$ respectively (recall that by (\ref{observebound}), both of these values are at most $T_i^{1/2-\beta}$).
    Since $T_i/L_i < \ln\Delta$ by \textcolor{defaultcolor}{Lemma \ref{decreasinglem}}, we have: $$\frac{K}{\ln\Delta} \times T_i \times \frac{t_i(u^{\ast \ast},v,\gamma')}{L_i} < Kt_i(u^{\ast \ast}, v, \gamma) < \alpha$$ for some $\alpha = O(T_i^{1/2-\beta})$. Rearranging terms, we get: $$\frac{K}{\ln\Delta} \times \frac{t_i(u^{\ast \ast},v,\gamma')}{L_i} < \frac{\alpha}{T_i}.$$ Since $T_i \geq \Delta^{\frac{1}{2}\epsilon^2/(\epsilon + 4)^2}$ by assumption, we can see that the Chernoff Bound implies:
    \begin{align}
        \mathbf{Pr}\big(X_2 > \frac{1}{4}T_i^{1-\beta/2}\big) &< \mathbf{Pr}\big(|BIN(T_i, \frac{\alpha}{T_i}) - \alpha| >  \frac{1}{4}\frac{T_i^{1-\beta/2}}{T_i^{1/2-\beta}} - \alpha\big) \nonumber \\
        &< \mathbf{Pr}\big(|BIN(T_i, \frac{\alpha}{T_i}) - \alpha| > T_i^{1/2}\big) \nonumber \\
        &< 2e^{-\Big(\big(1 + \frac{T_i^{1/2}}{\alpha}\big)\ln\big(1 + \frac{T_i^{1/2}}{\alpha}\big)-\frac{T_i^{1/2}}{\alpha}\Big)\alpha} \nonumber \\
        &= 2e^{-\Big(\ln\big(1 + \frac{T_i^{1/2}}{\alpha}\big) + \frac{T_i^{1/2}}{\alpha}\ln\big(1 + \frac{T_i^{1/2}}{\alpha}\big)-\frac{T_i^{1/2}}{\alpha}\Big)\alpha} \nonumber \\
        &< 2e^{-\Big(\frac{T_i^{1/2}}{\alpha}\big(\ln\big(1 + \frac{T_i^{1/2}}{\alpha}\big)-1\big)\Big)\alpha} \nonumber \\
        &< 2e^{-\Big(\frac{T_i^{1/2}}{\alpha}\big(\ln\big(1 + \Omega(T_i^{\beta})\big)-1\big)\Big)\alpha} \nonumber \\
        &< 2e^{-T_i^{1/2}} \nonumber \tag*{since $\ln(1 + \Omega(T_i^{\beta})) \gg 1$} \\
        &< 2e^{-\ln^3\Delta} \nonumber \\
        &< \frac{1}{4}\Delta^{-\ln\Delta} \nonumber
    \end{align}
    for $\Delta$ sufficiently large.
    
    Since $X_1 \leq X \leq X_1 + X_2$, these two bounds together imply that
     $$ \mathbf{Pr}\Big(|X - \mathbf{E}(X)| \geq \frac{1}{2}T_i^{1-\beta/2}\Big) < \frac{1}{2}\Delta^{-\ln\Delta}$$
    as desired.
    
    Now we focus on $Y$. Analogously define $Y_1$ to be the number of constraints $(c,c') \in N_i(v,c)$ with endpoints $v,u$ such that during iteration $i+1$ \textit{one} of the following occurred:
    
    \begin{itemize}
        \item $u$ was not activated, or,
        \item $u$ did not retain a colour because of a conflict with a neighbour other than $v$
    \end{itemize}
    
    \noindent and in addition:
    
    \begin{itemize}
        \item $c'$ was removed from $u$'s list because of an equalizing flip or because a neighbour of $u$ other than $v$ caused a conflict
    \end{itemize}
    
    Define $Y_2$ to be the number of constraints $(c,c') \in N_i(v,c)$ with endpoints $v,u$ such that during iteration $i+1$ the following occurred:
    
    \begin{itemize}
        \item $u$ did not retain a colour because of a conflict with $v$, and,
        \item $c'$ was removed from $u$'s list because of an equalizing flip or because a neighbour of $u$ other than $v$ caused a conflict
    \end{itemize}
    
    First we focus on $Y_1$, the same argument used for $X_1$ shows that changing any activation or colour assignment can affect $Y_1$ by at most $T_i^{1/2-\beta}$, and changing the result of an equalizing coin flip may have an affect of at most 1.
    
    If vertices $v$ and $u$ are constrained by $(c, c') \in N_i(v,c)$ and $u$ is not activated then the activation choice for $u$ certifies this fact. Otherwise if $u$ is uncoloured because of a conflict with a neighbour $w \neq v$, then the activation and colour choices for $u$ and $w$ certify this. In addition, if $c'$ is removed from $u$'s list because of a conflict with a neighbour other than $v$ then the activation and colour choice for that neighbour certifies this. So in total we require a set of at most $6s$ random choices to certify that $Y_1 \geq s$.
    
    Therefore applying Talagrand's Inequality, the same calculation from before gives $$\mathbf{Pr}\big(|Y_1 - \mathbf{E}(Y_1)| > \frac{1}{4}T_i^{1-\beta/2}\big) < \frac{1}{4}\Delta^{-\ln\Delta}.$$
    
    Now $Y_2$ is again bounded above by the random variable $T_i^{1/2-\beta}BIN\big(T_i, \frac{K}{\ln\Delta} \times \frac{T_i^{1/2-\beta}}{L_i}\big)$. So we have $$\mathbf{Pr}\big(Y_2 > \frac{1}{4}T_i^{1-\beta/2}\big) < \frac{1}{4}\Delta^{-\ln\Delta},$$ and this implies $Y$ is sufficiently concentrated as before, so the result follows.
\end{enumerate}
\end{proof}

Our next step is to show property $P(i)$ holds for all appropriate values of $i$.


\begin{lem}\label{propertyholdslem}
With positive probability, $P(i)$ holds for every $i$ such that for all $j < i: L_j, T_j \geq \Delta^{\frac{1}{2}\epsilon^2/(\epsilon + 4)^2}$ and $T_j \geq \frac{1}{8}L_j$. 
\end{lem}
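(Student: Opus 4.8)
The plan is to prove this by induction on $i$, running the Wasteful Colouring Procedure one iteration at a time and, at each step, conditioning on the history and invoking the Lov\'asz Local Lemma for the fresh randomness of the current iteration. Since the sequences $L_i, T_i$ are deterministic, the set of ``valid'' indices (those $i$ with $L_j, T_j \geq \Delta^{\frac12\epsilon^2/(\epsilon+4)^2}$ and $T_j \geq \frac18 L_j$ for all $j < i$) is a fixed finite initial segment $\{0,1,\dots,I\}$, and I would show by induction that $\mathbf{Pr}\big(P(0)\wedge\cdots\wedge P(i)\big) > 0$ for every $i \leq I$; taking $i = I$ yields the lemma. The base case is immediate: by Claim \ref{truncationlem} and Modification \ref{mainmod}, after deleting the bad colours every vertex satisfies $\ell_0(v) \geq L_0$ and every remaining colour satisfies $t_0(v,c) \leq T_0$, so $P(0)$ holds with probability $1$.

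For the inductive step, fix a positive-probability outcome of iterations $1,\dots,i$ on which $P(0)\wedge\cdots\wedge P(i)$ holds; the size conditions for $j \le i$ are then in force, so Claim \ref{keepclaim} makes the equalizing coin flips of iteration $i+1$ well defined. Conditioned on this history, iteration $i+1$ uses fresh independent trials (activations, colour assignments, equalizing flips, together with the dummy choices of Lemma \ref{concentrationlem}). For each uncoloured vertex $v$ introduce the bad event $A_v = \{\ell_{i+1}(v) < L_{i+1}\}$ and for each $c \in L_i(v)$ the bad event $A_{v,c} = \{t_{i+1}(v,c) > T_{i+1}\}$. By Lemma \ref{expectationlem}(a), $\mathbf{E}(\ell_{i+1}(v)) = L_i\,\mathrm{Keep}_i$, so by (\ref{eqn:eqs1}) $A_v$ forces $\ell_{i+1}(v) < \mathbf{E}(\ell_{i+1}(v)) - L_i^{1-\beta/2}$, which has probability $< \Delta^{-\ln\Delta}$ by Lemma \ref{concentrationlem}(a). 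Using $t_i(v,c) \le T_i$ from $P(i)$ in Lemma \ref{expectationlem}(b) gives $\mathbf{E}(t'_{i+1}(v,c)) \leq T_i(1-\tfrac{K}{\ln\Delta}\mathrm{Keep}_i)\mathrm{Keep}_i + T_i^{3/4}$; since $\beta < \tfrac12$ we have $T_i^{3/4} + \tfrac12 T_i^{1-\beta/2} \le T_i^{1-\beta/2}$ for $\Delta$ large, so by (\ref{eqn:eqs1}) the event $t'_{i+1}(v,c) > T_{i+1}$ forces $t'_{i+1}(v,c) > \mathbf{E}(t'_{i+1}(v,c)) + \tfrac12 T_i^{1-\beta/2}$, which has probability $< \Delta^{-\ln\Delta}$ by Lemma \ref{concentrationlem}(b); as $t_{i+1}(v,c) \le t'_{i+1}(v,c)$, the same bound covers $A_{v,c}$. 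Hence every bad event has probability $p < \Delta^{-\ln\Delta}$.

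It remains to bound the dependency degree. Each of $\ell_{i+1}(v)$ and $t'_{i+1}(v,c)$ is determined by the trials performed at vertices within distance $2$ of $v$ (the choices at $N(v)$ and $v$'s own equalizing flips for $\ell_{i+1}(v)$; and, for $t'_{i+1}(v,c)$, whether each $u \in N(v)$ retains a colour and loses $c'$, which depends on the choices at $N(u)$ and $u$'s flips), so each bad event is mutually independent of all bad events whose ``owner'' vertex lies outside distance $4$. There are at most $\Delta^4$ vertices within distance $4$ of $v$, each carrying at most $1 + \ell_0(v') = O(\sqrt{\Delta/\ln\Delta})$ bad events, so $d = O(\Delta^{5})$ and $4pd = O\big(\Delta^{5}\Delta^{-\ln\Delta}\big) < 1$ for $\Delta$ large. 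The Lov\'asz Local Lemma then gives, with positive conditional probability, that no bad event occurs, i.e.\ $P(i+1)$ holds; combined with the positive-probability history this yields $\mathbf{Pr}\big(P(0)\wedge\cdots\wedge P(i+1)\big) > 0$, completing the induction. I expect the main obstacle to be the bookkeeping in this dependency analysis: one must use triangle-freeness (no edges inside $N(v)$) and $C_4$-freeness (each vertex outside $N(v)$ has at most one neighbour in $N(v)$), exactly as in Lemma \ref{concentrationlem}, to confine the influence of a single trial — hence the mutual-independence neighbourhood of each bad event — to a polynomial-in-$\Delta$ range, so that the doubly-exponentially small failure probabilities coming from Talagrand's Inequality dominate.
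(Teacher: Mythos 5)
Your proposal is correct and follows essentially the same route as the paper: induction on $i$ with base case from Modification \ref{mainmod}, bad events $A_v$ and $A_{v,c}$, failure probabilities bounded via Lemma \ref{expectationlem} and Lemma \ref{concentrationlem}, and the Lov\'asz Local Lemma with a dependency radius of $4$ giving $d = O(\Delta^5)$. The only slight difference is that you make the conditioning on a positive-probability history explicit in the inductive step, which is a cleaner presentation of what the paper does implicitly, but the argument is the same.
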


\begin{proof}
The proof is by induction. For the base case, we note that the bounds $\ell_0(v) \geq L_0$ and $t_0(v,c) \leq T_0$ clearly hold by \textcolor{defaultcolor}{Modification \ref{mainmod}}. So $P(0)$ holds. \\
\indent For the induction step we assume $P(i)$ holds for some $i \in \mathbb{N}$, and we will show that with positive probability $P(i+1)$ holds.\\
\indent For every $v$, neighbour $u$ of $v$, and $c \in L_{i+1}(v)$ we define $A_v$ to be the event that $\ell_{i+1}(v) < L_{i+1}$, and $B_{v,c}$ be the event that $t_{i+1}(v,c) > T_{i+1}$. If none of these events hold, then $P(i+1)$ holds. \\
\indent If the event $A_v$ occurs, by \textcolor{defaultcolor}{Lemma \ref{expectationlem}} this implies that $|\ell_{i+1}(v) - \mathbf{E}(\ell_{i+1}(v))| > L_i^{1-\beta/2}$, and by \textcolor{defaultcolor}{Lemma \ref{concentrationlem}} this happens with probability at most $p=\Delta^{-\ln\Delta}$ (notice that we've implicitly used the inductive hypothesis here when making use of \textcolor{defaultcolor}{Lemma \ref{expectationlem} and Lemma \ref{concentrationlem}}).

Also if the event $B_{v,c}$ occurs we have:
\begin{align}
t_{i+1}(v,c) &> T_i(1-\frac{K}{\ln\Delta}\mathrm{Keep}_i) \times \mathrm{Keep}_i + T_i^{1-\beta/2} \tag*{by definition (\ref{eqn:eqs1})} \nonumber \\
&\geq t_i(v,c)\Big(1-\frac{K}{\ln\Delta}\mathrm{Keep}_i\Big) \times \mathrm{Keep}_i + T_i^{1-\beta/2} \tag*{by $IH$} \nonumber \\
&\geq t_i(v,c)\Big(1-\frac{K}{\ln\Delta}\mathrm{Keep}_i\Big) \times \mathrm{Keep}_i + T_i^{3/4} + \frac{1}{2}T_i^{1-\beta/2} \tag*{since $T_i \geq \Delta^{\frac{1}{2}\epsilon^2/(\epsilon + 4)^2}$ and recalling (\ref{betadef})} \nonumber \\
&\geq \mathbf{E}(t'_{i+1}(v,c)) + \frac{1}{2}T_i^{1-\beta/2} \tag*{by \textcolor{defaultcolor}{Lemma \ref{expectationlem}}} \nonumber
\end{align}

\noindent and since $t_{i+1}(v,c) \leq t'_{i+1}(v,c)$ we can see this implies $|t'_{i+1}(v,c) - \mathbf{E}(t'_{i+1}(v,c))| > \frac{1}{2}T_i^{1-\beta/2}$. Again by \textcolor{defaultcolor}{Lemma \ref{concentrationlem}}, this happens with probability at most $p$. \\
\indent Moreover, each event corresponding to a vertex $v$ is determined by equalizing coin flips and colours assigned to vertices of distance at most 2 from $v$. Therefore, by the Mutual Independence Principle (see chapter 4 of \cite{graph-colouring-book}), each event corresponding to a vertex $v$ is mutually independent of all events except for those corresponding to vertices of distance at most 4 from $v$. So every event can only be dependent on at most $d= \Delta^4 \times O\big(\sqrt{\Delta/\ln\Delta}\big) < \Delta^5$ other events. So $pd < \Delta^{-\ln\Delta}\Delta^5 < 1/4$ for $\Delta$ sufficiently large and the result follows from the Lov{\'a}sz Local Lemma.
\end{proof}

Our next lemma shows that the values for $L_{i+1}$ and $T_{i+1}$ defined in (\ref{eqn:eqs1}) do not stray too far from the values $L_{i+1}'$ and $T_{i+1}'$ defined in (\ref{eqn:eqs2}). As the recursive equations defined in (\ref{eqn:eqs2}) have a much nicer structure, this will simplify our analysis significantly moving forward.

\begin{lem}\label{closeequationslem}
If for all $j < i$ we have $L_j,T_j \geq \Delta^{\frac{1}{2}\epsilon^2/(\epsilon + 4)^2}$ and $T_j \geq \frac{1}{8}L_j$, then

\begin{enumerate}
    \item[$a)$] $|L_i - L'_i| \leq (L'_i)^{1-\beta/4} = o(L'_i)$;
    \item[$b)$] $|T_i - T'_i| \leq (T'_i)^{1-\beta/4} = o(T'_i)$.
\end{enumerate}
\end{lem}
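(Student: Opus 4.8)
The plan is to prove both statements simultaneously by induction on $i$, exploiting the fact that $\mathrm{Keep}_i$ is a function of the unprimed values $L_i,T_i$ only, so it appears \emph{identically} in the recursions (\ref{eqn:eqs1}) and (\ref{eqn:eqs2}). The base case $i=0$ is immediate since $L_0=L'_0$ and $T_0=T'_0$, so both differences are zero. For the inductive step we assume the hypothesis of the lemma for index $i+1$ (that is, $L_j,T_j\geq\Delta^{\frac12\epsilon^2/(\epsilon+4)^2}$ and $T_j\geq\frac18 L_j$ for all $j<i+1$); in particular this holds for all $j<i$, so the induction hypothesis applies and gives $|L_i-L'_i|\leq (L'_i)^{1-\beta/4}$ and $|T_i-T'_i|\leq (T'_i)^{1-\beta/4}$. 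In what follows I also record the consequence $L'_i\geq L_i/2$ and $L_i\leq 2L'_i$ (and likewise for $T$), valid for $\Delta$ large, together with the polynomial lower bound $L_i\geq L_{i-1}\mathrm{Keep}_{i-1}-L_{i-1}^{1-\beta/2}\geq\frac14\Delta^{\frac12\epsilon^2/(\epsilon+4)^2}$ obtained from Lemma \ref{keepboundlem}/Remark \ref{Kremark} (for $i\geq1$; for $i=0$ one simply uses $L'_0=L_0=(\sqrt2+\epsilon/2)\sqrt{\Delta/\ln\Delta}$).

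For part $a)$, subtracting the recursions and using that $\mathrm{Keep}_i$ is common yields $L_{i+1}-L'_{i+1}=(L_i-L'_i)\mathrm{Keep}_i-L_i^{1-\beta/2}$, so by the triangle inequality and the induction hypothesis $|L_{i+1}-L'_{i+1}|\leq (L'_i)^{1-\beta/4}\mathrm{Keep}_i+L_i^{1-\beta/2}$. Since $L'_{i+1}=L'_i\mathrm{Keep}_i$, the target bound $(L'_{i+1})^{1-\beta/4}=(L'_i)^{1-\beta/4}\mathrm{Keep}_i^{1-\beta/4}$ will follow once we verify
\[
L_i^{1-\beta/2}\;\leq\;(L'_i)^{1-\beta/4}\bigl(\mathrm{Keep}_i^{1-\beta/4}-\mathrm{Keep}_i\bigr).
\]
To bound the right side from below I would use Lemma \ref{keepboundlem} and Remark \ref{Kremark}, which give $\tfrac12\leq\mathrm{Keep}_i\leq 1-\tfrac{K}{10\ln\Delta}$; hence $\mathrm{Keep}_i^{1-\beta/4}-\mathrm{Keep}_i=\mathrm{Keep}_i(\mathrm{Keep}_i^{-\beta/4}-1)=\Omega(1/\ln\Delta)$. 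For the left side, $L_i\leq 2L'_i$ gives $L_i^{1-\beta/2}\leq 2(L'_i)^{1-\beta/2}$, so the displayed inequality reduces to $(L'_i)^{\beta/4}\gg\ln\Delta$, which holds because $L'_i\geq L_i/2$ is polynomially large in $\Delta$. Finally $(L'_i)^{1-\beta/4}=(L'_i)\cdot(L'_i)^{-\beta/4}=o(L'_i)$ since $L'_i\to\infty$ with $\Delta$, giving the stated ``$=o(L'_i)$''.

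Part $b)$ is entirely analogous, with the contraction factor $\lambda_i:=\bigl(1-\tfrac{K}{\ln\Delta}\mathrm{Keep}_i\bigr)\mathrm{Keep}_i$ playing the role of $\mathrm{Keep}_i$: subtracting the $T$-recursions gives $|T_{i+1}-T'_{i+1}|\leq (T'_i)^{1-\beta/4}\lambda_i+T_i^{1-\beta/2}$, and it suffices to check $T_i^{1-\beta/2}\leq (T'_i)^{1-\beta/4}(\lambda_i^{1-\beta/4}-\lambda_i)$. Here $\tfrac14\leq\lambda_i\leq 1-\tfrac{K}{10\ln\Delta}$ — the lower bound because $\mathrm{Keep}_i\geq\tfrac12$ and $\tfrac{K}{\ln\Delta}\mathrm{Keep}_i\to0$, the upper bound from Lemma \ref{keepboundlem} — so again $\lambda_i^{1-\beta/4}-\lambda_i=\Omega(1/\ln\Delta)$, and one uses $T_i\geq\tfrac14 T_{i-1}\geq\tfrac14\Delta^{\frac12\epsilon^2/(\epsilon+4)^2}$ exactly as for $L$.

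I expect the only real obstacle to be the exponent bookkeeping: one must confirm that the error terms $L_i^{1-\beta/2}$ and $T_i^{1-\beta/2}$ are a genuine \emph{power} smaller than the respective gaps $(L'_i)^{1-\beta/4}/\ln\Delta$ and $(T'_i)^{1-\beta/4}/\ln\Delta$. This is precisely why the exponent $1-\beta/4$ in the statement is chosen strictly between $1-\beta/2$ and $1$ so that the induction closes, and it is the single place where the \emph{upper} bound $\mathrm{Keep}_i\leq 1-\tfrac{K}{10\ln\Delta}$ from Lemma \ref{keepboundlem} is indispensable — if $\mathrm{Keep}_i$ could equal $1$, the gap would collapse and the argument would fail.
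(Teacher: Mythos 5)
Your proof is correct and follows essentially the same strategy as the paper: induct on $i$, use the fact that $\mathrm{Keep}_i$ appears identically in both recursions so the differences satisfy a one-step linear recurrence with error term $L_i^{1-\beta/2}$ (resp.\ $T_i^{1-\beta/2}$), and then absorb that error into the gap $\mathrm{Keep}_i^{1-\beta/4}-\mathrm{Keep}_i=\Omega(1/\ln\Delta)$ coming from the upper bound $\mathrm{Keep}_i\leq 1-\frac{K}{10\ln\Delta}$ of Lemma \ref{keepboundlem}, which is exactly the pivot the paper uses. The only cosmetic differences are that the paper exploits the sign $L'_i>L_i$ (resp.\ $T'_i<T_i$) rather than the triangle inequality, and estimates $\mathrm{Keep}_i^{1-\beta/4}-\mathrm{Keep}_i$ by appealing to the monotonicity of $x\mapsto x^{1-\beta/4}-x$ on $[e^{-1},1]$ together with a Maclaurin expansion instead of your direct $\mathrm{Keep}_i(\mathrm{Keep}_i^{-\beta/4}-1)$ computation; both yield the same $\Omega(1/\ln\Delta)$ lower bound.
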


\begin{proof}
\
\begin{enumerate}[wide=\parindent]
    \item[$a)$] The proof is by induction. Since $L'_i > L_i$, it suffices to show that $L_i' \leq L_i + (L'_i)^{1-\beta/4}$. Initially this is clearly the case. Now suppose that for some $i > 0$ we have $L_i' \leq L_i + (L'_i)^{1-\beta/4}$. It is easy to verify that the function $f(x) = x^{1-\beta/4} - x$ has only one critical point in the interval $(0, 1)$, namely at $x = (1-\beta/4)^{4/\beta} < e^{-1}$. Therefore $f(x)$ is decreasing on the interval $[e^{-1}, 1]$. By \textcolor{defaultcolor}{Lemma \ref{keepboundlem}} and \textcolor{defaultcolor}{Remark \ref{Kremark}}, $\mathrm{Keep}_i$ is within this interval. Also by \textcolor{defaultcolor}{Lemma \ref{keepboundlem}} we have $\mathrm{Keep}_i \leq (1-\textcolor{defaultcolor}{\frac{K}{10}\ln\Delta})$ and so using the fact that $f$ is decreasing we have 
    \begin{align}\label{keepbound}
        \mathrm{Keep}_i^{1-\beta/4} - \mathrm{Keep}_i \geq \Big(1-\textcolor{defaultcolor}{\frac{K}{10\ln\Delta}}\Big)^{1-\beta/4} - \Big(1-\textcolor{defaultcolor}{\frac{K}{10\ln\Delta}}\Big).
    \end{align}
    
    \noindent Now we use the Maclaurin Series expansion of the function $g(x) = (1-x)^{1-\beta/4} = 1 - (1-\beta/4)x - \frac{\beta}{8}(1-\beta/4)x^2 - \cdots$ to see that $(1-x)^{1-\beta/4} = 1-(1-\beta/4)x - O(x^2)$ for $x \in [0, 1]$. So for $x = \textcolor{defaultcolor}{\frac{K}{10\ln\Delta}}$ and for $\Delta$ sufficiently large, $$\Big(1-\textcolor{defaultcolor}{\frac{K}{10\ln\Delta}}\Big)^{1-\beta/4} \geq 1 - \textcolor{defaultcolor}{(1 - \beta/4)\frac{K}{10\ln\Delta}} - O\Big(\frac{1}{\ln^2\Delta}\Big) \geq 1 - \textcolor{defaultcolor}{(1-\beta/8)\frac{K}{10\ln\Delta}}$$ 
    So continuing (\ref{keepbound}) we see that 
    \begin{align}\label{keepbound2}
        \mathrm{Keep}_i^{1-\beta/4} - \mathrm{Keep}_i \geq 1 - \textcolor{defaultcolor}{(1 - \beta/8)\frac{K}{10\ln\Delta}} - \Big(1- \textcolor{defaultcolor}{\frac{K}{10\ln\Delta}}\Big) = \textcolor{defaultcolor}{\frac{\beta K}{80\ln\Delta}}.
    \end{align}
    
    \noindent Now we have: 
    \begin{align}
        L'_{i+1} &= \mathrm{Keep}_iL'_i \nonumber \tag*{by definition (\ref{eqn:eqs2})} \\
        &\leq \mathrm{Keep}_i(L_i + (L'_i)^{1-\beta/4}) \nonumber \tag*{by $IH$}\\
        & = \mathrm{Keep}_iL_i + \mathrm{Keep}_i(L'_i)^{1-\beta/4}\nonumber \\
        &= L_{i+1} + L_i^{1-\beta/2} + \mathrm{Keep}_i(L'_i)^{1-\beta/4} \nonumber \tag*{using definition (\ref{eqn:eqs1})} \\
        &\leq L_{i+1} + L^{1-\beta/2}_i + \mathrm{Keep}_i^{1-\beta/4}(L'_i)^{1-\beta/4} - \textcolor{defaultcolor}{\frac{\beta K}{80\ln\Delta}}(L'_i)^{1-\beta/4} \nonumber \tag*{by (\ref{keepbound2})} \\
        &= L_{i+1} + \mathrm{Keep}_i^{1-\beta/4}(L'_i)^{1-\beta/4} + L^{1-\beta/2}_i - \textcolor{defaultcolor}{\frac{\beta K}{80\ln\Delta}}(L'_i)^{1-\beta/4} \nonumber \\
        &\leq L_{i+1} + (L'_{i+1})^{1-\beta/4} \nonumber \tag*{using definition (\ref{eqn:eqs2})}
    \end{align}
    
    \noindent where in the last bound we use the fact that the negative term dominates over $L^{1-\beta/2}_i$ for $\Delta$ sufficiently large (since $L_i \geq \Delta^{\frac{1}{2}\epsilon^2/(\epsilon + 4)^2}$).
    
    \item[$b)$] For this part we have $T'_i < T_i$, and so it suffices to show that $T'_i \geq T_i - (T'_i)^{1-\beta/4}$. The proof uses the same method as before, except this time we show that for $x = \mathrm{Keep}_i(1-\frac{K}{\ln\Delta}\mathrm{Keep}_i)$ we have $x^{1-\beta/4} - x = \Omega\big(\frac{1}{\ln\Delta}\big)$, which allows us to complete the proof in the same way.
\end{enumerate}
\end{proof}

With the help of \textcolor{defaultcolor}{Lemma $\ref{closeequationslem}$} we can finally show that our list sizes never become too small, while on the other hand showing that $T_i$ eventually becomes much smaller than $L_i$. This will allow us to apply \textcolor{defaultcolor}{Lemma \ref{reedlem}}.


\begin{lem}\label{listsizelargelem}
There exists $i^{\ast}$ such that 
\begin{enumerate}
    \item[$a)$] For all $i < i^{\ast}, T_i, L_i > \Delta^{\frac{1}{2}\epsilon^2/(\epsilon + 4)^2}$, and $T_i \geq \frac{1}{8}L_i$;
    \item[$b)$] $T_{i^{\ast}} < \frac{1}{8}L_{i^{\ast}}$.
\end{enumerate}
\end{lem}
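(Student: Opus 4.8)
The plan is to carry out the bootstrap the statement sets up: let $i^{\ast}$ be the least index at which the ratio $T_i/L_i$ drops below $1/8$, and show in tandem that, up to that index, every $L_i$ stays above $\Delta^{\frac12\epsilon^2/(\epsilon+4)^2}$. All the work is in the list-size assertion; once it is in force, the lower bound $\mathrm{Keep}_i\ge\kappa:=e^{-K(1+\epsilon e^{-\epsilon}/20)/(\sqrt2+\epsilon/2)^2}$ of Lemma~\ref{keepboundlem}, fed through the recursion (\ref{eqn:eqs2}) and compared back to $T_i,L_i$ by Lemma~\ref{closeequationslem}, gives $T_i/L_i\le(1+o(1))(T_0/L_0)(1-\kappa K/\ln\Delta)^i\to0$, so $i^{\ast}$ is well defined and finite.

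First I would fix the constant $K=K(\epsilon)$ to be small. It needs two properties: $\kappa\ge\tfrac12$ (as used in Remark~\ref{Kremark}) and
\[
 \mu\;:=\;\frac{1+\epsilon e^{-\epsilon}/30}{\kappa\,(\sqrt2+\epsilon/2)^2}\;<\;\frac12-\frac12\cdot\frac{\epsilon^2}{(\epsilon+4)^2}.
\]
Both can be arranged for every $\epsilon>0$: $\kappa\to1$ as $K\to0$, and at $\kappa=1$ the gap between the two sides is $\frac12-\frac{1}{(\sqrt2+\epsilon/2)^2}-\frac12\frac{\epsilon^2}{(\epsilon+4)^2}$, which a short first-order check shows is strictly positive for all $\epsilon>0$ (both competing terms vanish at $\epsilon=0$, but the first has positive slope there while the second is $O(\epsilon^2)$). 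This is the only place where the precise exponent $\frac12\epsilon^2/(\epsilon+4)^2$ is exploited.

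The crux is a lower bound on the running product $\prod_{j<i}\mathrm{Keep}_j$ that is \emph{uniform in $i$}: whenever $L_j,T_j\ge\Delta^{\frac12\epsilon^2/(\epsilon+4)^2}$ and $T_j\ge\tfrac18 L_j$ for all $j<i$, then $\prod_{j<i}\mathrm{Keep}_j\ge\Delta^{-(1+o(1))\mu}$. Write $r_j=T_j/L_j$. Since $L_j\ge\Delta^{\frac12\epsilon^2/(\epsilon+4)^2}$, expanding (\ref{eqn:keep}) gives $\ln\mathrm{Keep}_j\ge-(1+o(1))\frac{K(1+\epsilon e^{-\epsilon}/30)}{\ln\Delta}\,r_j$; Lemma~\ref{keepboundlem} gives $\mathrm{Keep}_j\ge\kappa$, so (\ref{eqn:eqs2}) yields $r'_{j+1}\le r'_j(1-\kappa K/\ln\Delta)$ and Lemma~\ref{closeequationslem} gives $r_j=(1+o(1))r'_j$; hence $\sum_{j<i}r_j\le(1+o(1))\,r_0\cdot\frac{\ln\Delta}{\kappa K}$ with $r_0=T_0/L_0=\frac{\ln\Delta}{(\sqrt2+\epsilon/2)^2}$. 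Summing the bound on $\ln\mathrm{Keep}_j$ and substituting proves the claim. What makes this work is that the geometric series converges, so however many iterations have been run the product stays above a fixed power of $\Delta$.

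With the product bound in hand I would finish by induction on $j<i^{\ast}$. For such $j$ one has $T_j\ge\tfrac18 L_j$ by minimality of $i^{\ast}$, and assuming $L_{j'},T_{j'}\ge\Delta^{\frac12\epsilon^2/(\epsilon+4)^2}$ for all $j'<j$, both the product bound and Lemma~\ref{closeequationslem} apply at $j$, so
\[
 L_j\ge(1-o(1))L'_j=(1-o(1))L_0\prod_{j'<j}\mathrm{Keep}_{j'}\ge\Delta^{\frac12-\mu-o(1)},
\]
which beats $\Delta^{\frac12\epsilon^2/(\epsilon+4)^2}$ for $\Delta$ large by the choice of $K$; then $T_j\ge\tfrac18 L_j>\tfrac18\Delta^{\frac12-\mu-o(1)}$ beats it too, closing the induction. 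This gives part $(a)$; part $(b)$ is the definition of $i^{\ast}$, and running the same induction over all $j$ confirms $i^{\ast}$ is finite (otherwise $r_j\to0$ contradicts $r_j\ge\tfrac18$). I expect the uniform product bound to be the main obstacle: the crude estimate $\mathrm{Keep}_j\ge\tfrac12$ only produces the effective constant $\frac{2(1+\epsilon e^{-\epsilon}/30)}{(\sqrt2+\epsilon/2)^2}$, which already fails near $\epsilon=1$, so $K$ must be small enough that each $\mathrm{Keep}_j$ is close to $1$ (halving the constant) yet large enough that the iteration terminates — striking this balance is precisely the purpose of the definition of $K$.
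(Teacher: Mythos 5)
Your proposal is essentially the paper's own argument, with the same skeleton: (i) the uniform lower bound $\mathrm{Keep}_j\ge\kappa$ from Lemma~\ref{keepboundlem} makes $r'_j$ decay geometrically via (\ref{eqn:eqs2}); (ii) Lemma~\ref{closeequationslem} lets you pass this to $r_j$; (iii) summing the resulting bound on $\ln\mathrm{Keep}_j$ gives $\prod_j \mathrm{Keep}_j\ge\Delta^{-\mu(K)}$ uniformly in the number of iterations, and for $K$ suitably small $\mu(K)$ beats $\tfrac12-\tfrac12\epsilon^2/(\epsilon+4)^2$; (iv) part (b) is the geometric decay of $r_i$ itself. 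The paper packages step (iii) slightly differently -- it first feeds the improved bound $r_i\le(1+\epsilon e^{-\epsilon}/30)r'_i$ back into the $\mathrm{Keep}_i$ estimate and then sums the geometric series -- and it pins $K$ down by the explicit formula (\ref{Kdef}), chosen precisely so the exponent collapses to $\big(\tfrac{1+\epsilon e^{-\epsilon}/10}{\sqrt2+\epsilon/2}\big)^2$, whereas you leave $K$ as an unspecified small constant; but these are cosmetic. One small slip: at $\kappa=1$ the residual constant should be $\tfrac12-\tfrac{1+\epsilon e^{-\epsilon}/30}{(\sqrt2+\epsilon/2)^2}-\tfrac12\tfrac{\epsilon^2}{(\epsilon+4)^2}$, not $\tfrac12-\tfrac{1}{(\sqrt2+\epsilon/2)^2}-\tfrac12\tfrac{\epsilon^2}{(\epsilon+4)^2}$ (you dropped the $1+\epsilon e^{-\epsilon}/30$); since $\epsilon e^{-\epsilon}/30\le e^{-1}/30$ this does not affect positivity, but the first-order check near $\epsilon=0$ must then account for it. Also note that any fixed $K>0$ makes the iteration terminate (part (b) needs no lower bound on $K$), so the ``large enough to terminate'' constraint you mention is vacuous.
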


\begin{proof}
\
\begin{enumerate}[wide=\parindent]
    \item[$a)$] For this part we show that $T_i$ and $L_i$ are sufficiently large whenever $T_i \geq \frac{1}{8}L_i$. The proof is by induction, $L_0$ and $T_0$ are clearly sufficiently large. We show that for some $i$, if $T_i$ and $L_i$ are sufficiently large and $T_{i+1} \geq \frac{1}{8}L_{i+1}$, then $T_{i+1}$ and $L_{i+1}$ are sufficiently large as well. If $T_{i+1} < \frac{1}{8}L_{i+1}$ then there is no need to continue the procedure for more than $i+1$ iterations and this implies the existence of $i^{\ast}$. We show that this indeed happens in part ($b$). \\
    \indent First, recall the definitions of $L'_{i+1}$ and $T'_{i+1}$. Initially, we have: 
    
    \begin{gather}\label{initialdef}
        \begin{align}
            L'_0 &= L_0 = (\sqrt{2} + \epsilon/2)\sqrt{\Delta/\ln\Delta} \nonumber
            \\
            \\
            T'_0 &= T_0 = \frac{1}{\sqrt{2} + \epsilon/2}\sqrt{\Delta\ln\Delta} \nonumber
        \end{align}
    \end{gather}
    and we recursively define:

    \begin{gather}\label{recursivedef}
        \begin{align}
            L'_{i+1} &= L'_i \times \mathrm{Keep}_i \nonumber
            \\
            \\
            T'_{i+1} &= T'_i\Big(1-\frac{K}{\ln\Delta}\mathrm{Keep}_i\Big) \times \mathrm{Keep}_i \nonumber
        \end{align}
    \end{gather}
    
    We focus on showing $L_{i+1}$ is large first, but rather than finding a lower bound for $L_{i+1}$, \textcolor{defaultcolor}{Lemma \ref{closeequationslem}} allows us to focus on $L'_{i+1}$ instead.  In particular, for each $j \leq i$ we focus on bounding $\mathrm{Keep}_j$ which is closely related to the ratio $r_j = T_j/L_j \approx r'_j = T'_j/L'_j$. \\
    \indent Recall that we gave the following lower bound on $\mathrm{Keep}_j$ in the proof of \textcolor{defaultcolor}{Lemma \ref{keepboundlem}}: 
    \begin{align}
        \mathrm{Keep}_j > \exp\bigg(-K\frac{(1+\frac{1}{20}\epsilon e^{-\epsilon})r_j}{\ln\Delta}\bigg) > \exp\bigg(-K\frac{(1+\frac{1}{20}\epsilon e^{-\epsilon})}{(\sqrt{2} + \epsilon/2)^2}\bigg) \label{keepboundweak}
    \end{align}
    Now by (\ref{recursivedef}) we have $$r'_i = r_0\prod_{j=1}^{i-1}\Bigg(1 - \frac{K}{\ln\Delta}\mathrm{Keep}_j\Bigg).$$
    So by (\ref{keepboundweak}), and since $r_0 = \frac{\ln\Delta}{(\sqrt{2} + \epsilon/2)^2}$ by (\ref{initialdef}) we get:
    \begin{align}
        r'_i &\leq r_0 \times \bigg(1 - \frac{K}{\ln\Delta}\exp\Big(-K\frac{(1+\frac{1}{20}\epsilon e^{-\epsilon})}{(\sqrt{2} + \epsilon/2)^2}\Big)\bigg)^{i-1} \nonumber \\
        &= \frac{\ln\Delta}{(\sqrt{2} + \epsilon/2)^2} \bigg(1 - \frac{K}{\ln\Delta}\exp\Big(-K\frac{(1+\frac{1}{20}\epsilon e^{-\epsilon})}{(\sqrt{2} + \epsilon/2)^2}\Big)\bigg)^{i-1} \label{rbound}
    \end{align} \\
    Moreover, by \textcolor{defaultcolor}{Lemma \ref{closeequationslem}} we have \begin{align}
        r_i = \frac{T_i}{L_i} \leq \frac{T'_i + (T'_i)^{1-\beta/4}}{L'_i - (L'_i)^{1-\beta/4}} = r'_i \times \frac{1 + \frac{1}{(T'_i)^{\beta/4}}}{1 - \frac{1}{(L'_i)^{\beta/4}}} = (1+o(1))r'_i \label{strongrbound}
    \end{align}
    where in the last equality we use the fact that $L'_i, T'_i$ are close to $L_i,T_i$ respectively by \textcolor{defaultcolor}{Lemma \ref{closeequationslem}}, and the latter values satisfy $L_i,T_i \geq \Delta^{\frac{1}{2}\epsilon^2/(\epsilon+4)^2}$ by $IH$. So by (\ref{strongrbound}) we get $r_i \leq (1 + \epsilon e^{-\epsilon}/30)r'_i$ for $\Delta$ sufficiently large. So now we have:
    \begin{align}
        r_i &\leq (1 + \epsilon e^{-\epsilon}/30)r'_i \nonumber \\
        &\leq \frac{(1+ \epsilon e^{-\epsilon}/30)\ln\Delta}{(\sqrt{2} + \epsilon/2)^2}\bigg(1 - \frac{K}{\ln\Delta}\exp\Big(-K\frac{(1+\frac{1}{20}\epsilon e^{-\epsilon})}{(\sqrt{2} + \epsilon/2)^2}\Big)\bigg)^{i-1} \tag*{by (\ref{rbound})}
    \end{align}
    
    \noindent
    which yields a better lower bound on $\mathrm{Keep}_i$: 
    \begin{align}
        \mathrm{Keep}_i &> \exp\Bigg(-K(1+\epsilon e^{-\epsilon}/20)r_i/\ln\Delta\Bigg) \nonumber \tag*{by (\ref{keepboundweak})} \\
        &\geq \exp\Bigg(-\frac{K(1+\epsilon e^{-\epsilon}/20)(1+\epsilon e^{-\epsilon}/30)\ln\Delta}{\ln\Delta(\sqrt{2} + \epsilon/2)^2}\Big(1 - \frac{K}{\ln\Delta}e^{-\frac{K(1+\epsilon e^{-\epsilon}/20)}{(\sqrt{2} + \epsilon/2)^2}}\Big)^{i-1}\Bigg) \nonumber \\
        &= \exp\Bigg(-\frac{K(1+\epsilon e^{-\epsilon}/20)(1+\epsilon e^{-\epsilon}/30)}{(\sqrt{2} + \epsilon/2)^2}\Big(1 - \frac{K}{\ln\Delta}e^{-\frac{K(1+\epsilon e^{-\epsilon}/20)}{(\sqrt{2} + \epsilon/2)^2}}\Big)^{i-1}\Bigg) \label{keepboundstrong}
    \end{align}
    
    \noindent
    It is straightforward to check that for any $\epsilon > 0$, we have $(1+\epsilon e^{-\epsilon}/20)(1+\epsilon e^{-\epsilon}/30) \leq (1+\epsilon e^{-\epsilon}/10)$ in the above expressions. So now we can proceed with our bound for $L'_i$. We have:
    \begin{align}
       L'_{i+1} &= L_0 \prod_{j=1}^i\mathrm{Keep}_j \nonumber \\
       &\geq (\sqrt{2} + \epsilon/2)\sqrt{\frac{\Delta}{\ln\Delta}}\exp\Bigg(-\frac{K(1+\epsilon e^{-\epsilon}/10)}{(\sqrt{2} + \epsilon/2)^2}\sum_{j=1}^i\Big(1 - \frac{K}{\ln\Delta}e^{-\frac{K(1+\epsilon e^{-\epsilon}/20)}{(\sqrt{2} + \epsilon/2)^2}}\Big)^{j-1}\Bigg) \nonumber \tag*{by (\ref{keepboundstrong})} \\
       &\geq (\sqrt{2} + \epsilon/2)\sqrt{\frac{\Delta}{\ln\Delta}}\exp\Bigg(-\frac{(1+\epsilon e^{-\epsilon}/10)\ln\Delta}{(\sqrt{2} + \epsilon/2)^2}e^{\frac{K(1+\epsilon e^{-\epsilon}/20)}{(\sqrt{2} + \epsilon/2)^2}}\Bigg) \nonumber \tag*{by the geometric series} \\
       &= (\sqrt{2} + \epsilon/2)\sqrt{\frac{\Delta}{\ln\Delta}} \times \Delta^{-\frac{(1+\epsilon e^{-\epsilon}/10)}{(\sqrt{2} + \epsilon/2)^2}\exp\Big(\frac{K(1+\epsilon e^{-\epsilon}/20)}{(\sqrt{2} + \epsilon/2)^2}\Big)} \label{monsterexpression}
    \end{align}

    Now it suffices to obtain a bound on the exponent of $\Delta$ in (\ref{monsterexpression}). We simply choose a value of $K$ which makes the exponent collapse to some value strictly greater than $-\frac{1}{2}$. We take 
    \begin{align}
        K=\frac{(\sqrt{2} + \epsilon/2)^2}{(1+\epsilon e^{-\epsilon}/20)}\ln{(1+\epsilon e^{-\epsilon}/10)}. \label{Kdef}
    \end{align} At this point the reader should verify that this definition is consistent with remark \ref{Kremark}. So we have:
    $$
       \frac{(1+\epsilon e^{-\epsilon}/10)}{(\sqrt{2} + \epsilon/2)^2}\exp\Bigg(\frac{K(1+\epsilon e^{-\epsilon}/20)}{(\sqrt{2} + \epsilon/2)^2}\Bigg) = \Bigg(\frac{1+\epsilon e^{-\epsilon}/10}{\sqrt{2} + \epsilon/2}\Bigg)^2
    $$
    \noindent
    It is straightforward (although tedious) to verify that $\frac{1}{2}-\Big(\frac{1+\epsilon e^{-\epsilon}/10}{\sqrt{2} + \epsilon/2}\Big)^2 > \frac{1}{2}\Big(\frac{\epsilon}{\epsilon+3}\Big)^2$. So continuing (\ref{monsterexpression}) we obtain:
    \begin{align}
        L'_{i+1} &\geq (\sqrt{2} + \epsilon/2)\Delta^{\frac{1}{2}\epsilon^2/(\epsilon + 3)^2}/\sqrt{\ln\Delta} \nonumber \\
        &\geq \Delta^{\frac{1}{2}\epsilon^2/(\epsilon + 3.5)^2} \nonumber
    \end{align}
    \noindent
    for $\Delta$ sufficiently large.
    
    \indent Therefore $L_{i+1} > L'_{i+1} - (L'_{i+1})^{1-\beta/4} > \Delta^{\frac{1}{2}\epsilon^2/(\epsilon + 3.75)^2}$ for $\Delta$ sufficiently large. Since $T_{i+1} \geq \frac{1}{8}L_{i+1}$ this means $T_{i+1} > \Delta^{\frac{1}{2}\epsilon^2/(\epsilon + 4)^2}$, so both $L_{i+1}$ and $T_{i+1}$ are at least $\Delta^{\frac{1}{2}\epsilon^2/(\epsilon + 4)^2}$ for $\Delta$ sufficiently large.
    \item[$b)$] Now we will show that the ratio $r_i = T_i/L_i$ eventually becomes smaller than $\frac{1}{8}$.\\
    \indent We know by \textcolor{defaultcolor}{Lemma \ref{keepboundlem}} that $\mathrm{Keep}_0 \geq c$ for some positive constant $c$. We also know by (\ref{strongrbound}) that $r_i \leq (1 + o(1))r'_i \leq 2r'_i$ for $\Delta$ sufficiently large. Now let $\hat{i} = \frac{2}{Kc}\ln\Delta\ln\ln\Delta$. Recalling the value of $r_0=\frac{\ln\Delta}{(\sqrt{2} + \epsilon/2)^2}$ we have: $$r_{\hat{i}} \leq 2r_0\Big(1 - \frac{Kc}{\ln\Delta}\Big)^{\frac{2}{Kc}\ln\Delta\ln\ln\Delta} \leq  \frac{2}{(\sqrt{2} + \epsilon/2)^2}\ln\Delta \times e^{-2\ln\ln\Delta} < \frac{1}{8}.$$ for sufficiently large $\Delta$.
    Let $S = \{i \leq \hat{i}: r_i < \frac{1}{8}\}$. Clearly $S$ is non-empty, so we choose $i^{\ast}$ to be the minimum value in $S$. Now the fact that $r_i$ is decreasing implies the lemma.
\end{enumerate}
\end{proof}

Finally, we put it all together to prove the main theorem.

\bigskip
\noindent
\textcolor{defaultcolor}{\textbf{Proof of Theorem \ref{conflict-theorem}.}}

Let $i^{\ast}$ be the natural number guaranteed by \textcolor{defaultcolor}{Lemma \ref{listsizelargelem}}. Firstly we can carry out the wasteful colouring procedure for $i^{\ast}$ iterations. By \textcolor{defaultcolor}{Lemma \ref{propertyholdslem} and Lemma \ref{listsizelargelem}}, $P(i)$ holds with positive probability for each $i \leq i^{\ast}$ so we do in fact perform $i^{\ast}$ iterations. Thus by property $P(i^{\ast})$ and \textcolor{defaultcolor}{Lemma \ref{listsizelargelem}}, we will have that for each $v$ and $c \in L(v)$, $t_{i^{\ast}}(v,c) \leq T_{i^{\ast}} \leq \frac{1}{8}L_{i^{\ast}}$, and the colouring can be completed by \textcolor{defaultcolor}{Lemma \ref{reedlem}}.

\section{Concluding Remarks}
In this paper we showed that for any $\epsilon > 0$, there exists $\Delta_0$ such that any multigraph with maximum degree $\Delta > \Delta_0$, no cycles of length 3 or 4 and an edge labelling $\tau$ with $D(\tau)$ at most $\Delta^{\frac{1}{4}\epsilon^2/(\epsilon + 5)^2}$ can be conflict coloured with at most $\ell=(2\sqrt{2} + \epsilon)\sqrt{\Delta/\ln\Delta}$ colours.

Note that for large $\epsilon$, our result means that a bound on $D(\tau)$ of $\Delta^{\frac{1}{4}- o(1)}$ is sufficient to complete the colouring with $\ell$ colours (here $o(1)$ is a function of $\epsilon$). On the other hand, \textcolor{defaultcolor}{Example \ref{counterexample2}} says that a bound of $\Delta^{\frac{1}{2}-o(1)}$ is necessary. Closing this gap could be an interesting problem for future work.

For small $\epsilon$ the gap is more significant since when $\epsilon$ tends to 0, theorem \ref{conflict-theorem} gives a bound which tends to $\Delta^0$ while \textcolor{defaultcolor}{Example \ref{counterexample2}} says that a bound of $\ell^{15/16} \approx \Delta^{15/32}$ is necessary. Closing this gap could be another interesting problem. A step in this direction is to answer the following questions:

\bigskip
\textbf{Question 1.} Is there a constant $d \in (0, \frac{15}{32})$ such that any multigraph with maximum degree $\Delta$, no cycles of length 3 or 4, and an edge labelling with conflict degree at most $\Delta^d$ can be conflict coloured with lists of size $(2\sqrt{2} + o(1))\sqrt{\Delta/\ln\Delta}$?

\bigskip
\textbf{Question 2.} If the answer to Question 1 is affirmative, what is the biggest value of $d$ possible?

\bigskip
We conclude by pointing out that these questions cannot be resolved in the affirmative with the analysis presented here. Recall \textcolor{defaultcolor}{Observation \ref{mainobserve}} and the following discussion. There we showed that if our list sizes are sufficiently large we have:
$$ t_i(v,u,c) \leq L_i^{1/2 -\beta}.$$
Recall that we required $t_i(v,u,c)$ to be quite small, and so this bound was important for proving the concentration results (\textcolor{defaultcolor}{Lemma \ref{concentrationlem}}) of our main parameters. 

Now when we allow the conflict degree to be as large as $\Delta^d$ for come absolute constant $d$, then initially $t_i(v,u,c)$ can also be this large. This becomes a significant problem if $t_i(v,u,c)$ remains this large throughout the procedure, since towards the end of the procedure our lists are of size $L_i \approx \Delta^{\epsilon^2}$. In particular, when $\epsilon$ is much smaller than $d$ it can be the case that $t_i(v,u,c)$ is as big as $L_i$. I.e. assigning $c$ to $v$ in this case could eliminate $u$'s list entirely.

A natural attempt to overcome this is to show that the parameter $t_i(v,u,c)$ decreases quickly with each iteration. Unfortunately, there are cases where this does not happen with high probability.

Nevertheless, as far as the authors know Questions 1 and 2 could conceivably be answered in the affirmative via other methods.

\section{Acknowledgements}

The authors would like to thank Aleksandar Nikolov for his careful review and valuable comments. This research is supported by an NSERC Discovery Grant.

\bibliographystyle{siam}
\bibliography{references}

\end{document}